\documentclass{amsart}
\usepackage{amssymb,latexsym,amsmath,amsfonts,hhline}
\usepackage{pdfsync}
\usepackage{ucs}
\usepackage{amssymb}
\usepackage{amsthm}
\usepackage{amsmath}
\usepackage{latexsym}
\usepackage[cp1251]{inputenc}
\usepackage[english]{babel}
\usepackage[mathcal]{eucal}
\usepackage{graphicx}
\usepackage{wrapfig}
\usepackage{caption}
\usepackage{subcaption}

\usepackage[left=3cm,right=3cm,top=3cm,bottom=3cm,bindingoffset=0cm]{geometry}

\makeatletter
\renewcommand{\subsection}{\@startsection{subsection}{2}{0mm}{-2mm}{-2mm}{\bf\normalsize}}

\makeatother

\newtheorem{formula}{}[section]
\newtheorem{definition}[formula]{Definition}
\newtheorem{corollary}[formula]{Corollary}
\newtheorem*{remark}{Remark}
\newtheorem{lemma}[formula]{Lemma}
\newtheorem{theorem}[formula]{Theorem}
\newtheorem{prop}{Proposition}[section]
\newtheorem{theo}{Theorem}[section]

\newtheorem{lemm}{Lemma}[section]

\newtheorem{corl}{Corollary}[section]

\theoremstyle{definition}
\newtheorem{defn}{Definition}[section]

\theoremstyle{remark}

\def\thrm{\begin{theorem}}
\def\thrml#1{\begin{theorem}\label{#1}}
\def\ethrm{\end{theorem}}
\def\rmrk{\begin{remark}}
\def\rmrkl#1{\begin{remark}\label{#1}}
\def\ermrk{\end{remark}}
\def\dfntn{\begin{definition}}
\def\dfntnl#1{\begin{definition}\label{#1}}
\def\edfntn{\end{definition}}
\def\nmrt{\begin{enumerate}}
\def\enmrt{\end{enumerate}}

\def\qtn{\begin{equation}}
\def\qtnl#1{\begin{equation}\label{#1}}
\def\eqtn{\end{equation}}
\def\lmm{\begin{lemma}}
\def\lmml#1{\begin{lemma}\label{#1}}
\def\elmm{\end{lemma}}
\def\crllr{\begin{corollary}}
\def\crllrl#1{\begin{corollary}\label{#1}}
\def\ecrllr{\end{corollary}}
\def\css{\begin{cases}}
\def\ecss{\end{cases}}

\DeclareMathOperator{\aut}{Aut}

\DeclareMathOperator{\sym}{Sym}
\DeclareMathOperator{\rad}{rad}

\begin{document}
\title{On  Schur $p$-groups of odd order}
\author{Grigory Ryabov}
\address{Novosibirsk State University, 2 Pirogova St., 630090 Novosibirsk, Russia}
\email{gric2ryabov@gmail.com}
\thanks{\rm The work is supported by the Russian Foundation for Basic Research (project 13-01-00505)}
\date{}

\begin{abstract}
 A finite group $G$ is called a Schur group if any $S$-ring over $G$ is associated in a natural way with a subgroup of $\sym(G)$ that contains all right translations. We prove that the groups $\mathbb{Z}_3\times \mathbb{Z}_{3^n}$, where $n\geq 1$, are  Schur. Modulo previously obtained results, it follows that every noncyclic Schur $p$-group, where $p$ is an odd prime, is isomorphic to $\mathbb{Z}_3\times \mathbb{Z}_3 \times \mathbb{Z}_3$ or $\mathbb{Z}_3\times \mathbb{Z}_{3^n}$, $n\geq 1$ .
\\
\\
\textbf{Keywords}: Permutation groups, Cayley schemes, $S$-rings,~Schur groups.
\\
\textbf{MSC}:05E30, 20B30.
\end{abstract}

\maketitle

\section{Introduction}
	
	Let $G$ be a finite group, $e$  the identity element of $G$. Let  $\mathbb{Z}G$ be the integer group ring. Given $X\subseteq G$,  denote the element $\sum_{x\in X} {x}$ of $\mathbb{Z}G$ by $\underline{X}$ and the set $\{x^{-1}:x\in X\}$ by $X^{-1}$.
	\begin{defn}
A subring  $\mathcal{A}$ of  $\mathbb{Z} G$ is called an \emph{$S$-ring} over $G$ if there exists a partition $\mathcal{S}=\mathcal{S}(\mathcal{A})$ of $G$ such that:

 $(1)$ $\{e\}\in\mathcal{S}$,

 $(2)$ $X\in\mathcal{S}\ \Rightarrow\ X^{-1}\in\mathcal{S}$,

 $(3)$ $\mathcal{A}=Span_{\mathbb{Z}}\{\underline{X}:\ X\in\mathcal{S}\}$.

The elements of this partition are called \emph{the basic sets}  of the $S$-ring $\mathcal{A}$.
\end{defn}

The definition of an $S$-ring goes back to Schur and Wielandt, they used ``the $S$-ring method'' to study a permutation group having a regular subgroup \cite{Schur,Wi}.

Let $\Gamma$ be a subgroup of $\sym(G)$ that contains  the subgroup of right translations $G_{right}=\{x\mapsto xg,~x\in G:g\in G\}$. Let $\Gamma_e$ stand for the stabilizer of $e$ in $\Gamma$ and  $Orb(\Gamma_e,G)$ stand for the set of all orbits of $\Gamma_e$ on $G$. As Schur proved in  \cite{Schur}, the $\mathbb{Z}$-submodule
$$\mathcal{A}=\mathcal{A}(\Gamma,~G)=Span_{\mathbb{Z}}\left\{\underline{X}:~X\in Orb(\Gamma_e,~G)\right\},$$
is an $S$-ring over $G$.

\begin{defn}[P\"{o}schel, 1974]
An $S$-ring $\mathcal{A}$ over  $G$ is called \emph{schurian} if $\mathcal{A}=\mathcal{A}\left(\Gamma,~G\right)$ for some $\Gamma$ with $G_{right}\leq \Gamma \leq \sym(G)$.
	\end{defn}
\begin{defn}[P\"{o}schel, 1974]
A finite group $G$ is called a \emph{Schur} group if every $S$-ring over $G$ is schurian.
\end{defn}

Wielandt wrote in  \cite{Wi2}:``Schur had conjectured for a long time that every $S$-ring is determined by a suitable permutation group'', or in our terms that every $S$-ring is schurian. However, it turns out to be false. The first counterexample  found by Wielandt \cite{Wi} is an $S$-ring over $\mathbb{Z}_p\times \mathbb{Z}_p$, where $p>3$ is a prime. The problem of determining all Schur groups was suggested by P\"{o}schel in \cite{Po} about 40 years ago. He proved that the cyclic $p$-groups are Schur whenever $p$ is odd. Moreover, if $p>3$ then a $p$-group is  Schur if and only if it is cyclic.  Applying this result P\"{o}schel and Klin solved the isomorphism problem for circulant graphs with $p^n$ vertices, where $p$ is an odd prime \cite{KP}.

Only 30 years later all cyclic Schur groups were classified  in  \cite{EKP1}. Strong necessary conditions of schurity for abelian groups were recently proved in \cite{EKP2}. From these conditions it follows that any abelian Schur group  belongs to one of  several explicitly given families. The case of non-abelian Schur groups was analyzed in \cite{PV}: it was proved   that every Schur group $G$ is solvable of derived length at most 2  and the number of distinct prime divisors of the order of $G$ does not exceed~$7$. 

In this article we are interested in abelian Schur $3$-groups. From  \cite[Theorem 1.2,~Theorem 1.3~] {EKP2} it follows that all abelian Schur $3$-groups are known except for the groups $\mathbb{Z}_3\times \mathbb{Z}_{3^n}$, where $n\geq 4$. We prove the following statement.

\begin{theo} \label{main}
The groups $\mathbb{Z}_3\times \mathbb{Z}_{3^n}$, where $n\geq 1$, are Schur.
\end {theo}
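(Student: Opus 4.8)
The plan is to fix an arbitrary $S$-ring $\cA$ over $G=\mZ_3\times\mZ_{3^n}$ and prove it is schurian, arguing by induction on $n$. The cases $n\le 3$, together with $\mZ_3\times\mZ_3$, are among the abelian Schur $3$-groups already identified by the results quoted in the introduction, so they provide the base of the induction. The structural feature I would exploit is that $G$ has a cyclic subgroup of index $3$: consequently every proper subgroup and every proper quotient of $G$ is either cyclic, elementary abelian of rank at most $2$, or again of the form $\mZ_3\times\mZ_{3^m}$ with $m<n$. All of these are Schur, by P\"oschel's theorem on cyclic $p$-groups and by the inductive hypothesis, so the entire difficulty is concentrated in those $S$-rings that cannot be reassembled from their proper sections.

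First I would record the lattice of $\cA$-subgroups (those $H\le G$ with $\und H\in\cA$) together with the radicals $\rad(X)$ of the basic sets $X\in\cS(\cA)$, and use them to search for a section $U/L$ with $\{e\}\ne L\le U\ne G$, both $L$ and $U$ being $\cA$-subgroups, relative to which $\cA$ decomposes as a nontrivial generalized wreath product (or, symmetrically, a nontrivial star/tensor product). In the wreath case this means that every basic set meeting $G\setminus U$ is a union of cosets of $L$, so that $\cA$ is recovered from its restriction $\cA_U$ and its quotient $\cA_{G/L}$; both are $S$-rings over strictly smaller sections of $G$, hence schurian by the base cases and the induction. Schurity of $\cA$ would then follow from a gluing lemma: a generalized wreath product whose restriction and quotient are schurian is itself schurian, provided the chosen schurian realizations can be made to agree over the common section $U/L$. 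I would verify this compatibility from the explicit cyclic or rank-$2$ structure of $U/L$, whose relevant automorphism groups are known; the star/tensor case is handled by the analogous, easier argument.

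This reduces the theorem to the indecomposable case, in which $\cA$ is neither a nontrivial generalized wreath product nor a nontrivial star/tensor product, and this is where I expect the main obstacle to lie. Here I would aim to show that $\cA$ is cyclotomic, i.e. $\cA=\mathrm{Cyc}(K,G)$ for some $K\le\aut(G)$ whose orbits on $G$ are exactly the basic sets of $\cA$ (or, failing that, that $\cA$ is of some other manifestly schurian type close to the group ring). A cyclotomic $S$-ring is automatically schurian, since it equals $\cA(G_{right}K,G)$. Establishing cyclotomy requires a direct analysis of the possible basic sets, controlled by the small index of the cyclic part and by the known structure of $\aut(\mZ_3\times\mZ_{3^n})$, and it must rule out the $3$-analogue of Wielandt's non-schurian $S$-ring over $\mZ_p\times\mZ_p$. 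Precisely here the hypothesis $p=3$ is decisive: the exotic non-cyclotomic configurations available for $p>3$ do not survive when $p=3$, so every indecomposable $S$-ring over $G$ is forced into the cyclotomic, hence schurian, shape. Combining the decomposable case (induction plus gluing) with the indecomposable case then gives schurity of every $S$-ring over $G$, completing the induction and the proof.
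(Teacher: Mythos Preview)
Your outline matches the paper's strategy: induction on $n$, reduction via generalized wreath (and tensor) products to proper sections, and a direct classification of the residual $S$-rings as cyclotomic. But there is a genuine gap in the gluing step.

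The compatibility you need is not routine, and your description of it is partly wrong. The section $U/L$ need not be cyclic or elementary abelian: if $U=D_k\cong\mZ_3\times\mZ_{3^k}$ and $L=C_1$ is the order-$3$ subgroup of the cyclic factor, then $U/L\cong\mZ_3\times\mZ_{3^{k-1}}$. The paper's gluing criterion (Theorem~\ref{genwr} and Corollary~\ref{isolated}) asks that $\aut(\cA_{U/L})$ be $2$-isolated, and to make this checkable the paper first proves a structure theorem (Theorem~\ref{nontrivrad}) forcing the wreath decomposition to satisfy either $|U/L|\le 3$ or $\rad(\cA_U)=e$ with $|L|=3$. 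Even with this control, there are cases---precisely when $\cA_U$ is one of the cyclotomic rings $Cyc(K_i,U)$ with $i\in\{6,7,8,9\}$ from Table~1---where $\aut(\cA_{U/L})$ is \emph{not} $2$-isolated for the obvious $L$, and the argument would stall. The paper escapes by proving that in these cases $H=\rad(\cA)$ is non-cyclic, which produces an \emph{alternative} wreath decomposition through $D_{n-1}/H$ with cyclic quotient, for which $2$-isolation does hold. This redirection is the subtlest point of the inductive step and is missing from your sketch.

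As for the residual case, the paper's dichotomy is $\rad(\cA)=e$ rather than ``indecomposable'', and it splits further into nonregular (forced to be a tensor product $\cA_H\otimes\cA_L$ with $\rk(\cA_H)=2$, Theorem~\ref{nonreg}) and regular (forced to be cyclotomic for one of ten explicit subgroups of $\aut(D)$, Theorem~\ref{regular}). This classification, not a single ``no Wielandt example'' observation, is where $p=3$ is used, and it occupies Sections~3--5, the bulk of the paper.
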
 
	
	Modulo previously obtained results \cite[Theorem 1.2,~Theorem 1.3]{EKP2}, \cite[Theorem 4.2]{PV}, \cite[Theorem~1]{Ry},  Theorem \ref{main} implies the classification of Schur $p$-groups, where  $p$ is an odd prime:
	
	\begin{theo}
	A $p$-group $G$, where  $p$ is an odd prime, is Schur if and only if $G$ is cyclic or $p=3$ and $G$ is isomorphic to one of the groups below:
	
	$(1)$ $\mathbb{Z}_3\times \mathbb{Z}_3 \times \mathbb{Z}_3;$
	
	$(2)$ $\mathbb{Z}_3\times \mathbb{Z}_{3^n}$, $n\geq 1.$  
	
	\end{theo}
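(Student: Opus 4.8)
The plan is to obtain the classification as a formal consequence of Theorem~\ref{main} together with the results cited in the statement; the argument is a case analysis and bookkeeping step rather than a new construction. I would treat the two implications separately.

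For the sufficiency (``if'') direction I would check that each listed group is Schur. A cyclic $p$-group with $p$ odd is Schur by P\"{o}schel's theorem \cite{Po}. The group $\mathbb{Z}_3\times\mathbb{Z}_3\times\mathbb{Z}_3$ is Schur by the classification of abelian Schur $3$-groups in \cite[Theorem~1.2, Theorem~1.3]{EKP2}. And $\mathbb{Z}_3\times\mathbb{Z}_{3^n}$ is Schur for every $n\geq 1$ by Theorem~\ref{main}; this is the new ingredient, since for $n\geq 4$ schurity is exactly the case left open by \cite{EKP2}.

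For the necessity (``only if'') direction I would take a Schur $p$-group $G$ with $p$ odd and narrow down its isomorphism type in three steps. First, I would rule out non-abelian $G$: by \cite[Theorem~4.2]{PV} a Schur group is solvable of derived length at most $2$, and by \cite[Theorem~1]{Ry} a non-abelian $p$-group of odd order is not Schur, so $G$ must be abelian. Second, if $G$ is cyclic we are finished, so suppose $G$ is non-cyclic; then necessarily $p=3$, because by \cite{Po} a non-cyclic $p$-group with $p>3$ is never Schur. Third, I would apply \cite[Theorem~1.2, Theorem~1.3]{EKP2}: among non-cyclic abelian $3$-groups, the only candidates not excluded there are $\mathbb{Z}_3\times\mathbb{Z}_3\times\mathbb{Z}_3$ and the family $\mathbb{Z}_3\times\mathbb{Z}_{3^n}$, and the residual cases $n\geq 4$ of that family are Schur by Theorem~\ref{main}. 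Hence $G$ is one of the two listed groups.

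Because the substance is carried entirely by the cited theorems, this assembly contains no hard internal step; the genuine obstacle is Theorem~\ref{main} itself, whose proof is what removes the last gap in \cite{EKP2}. The only points demanding care are to verify that each cited statement is invoked under the hypotheses it actually covers --- in particular that \cite{Ry} applies to all non-abelian $p$-groups of odd order, and that the exclusion list implicit in the abelian classification \cite{EKP2} leaves no further non-cyclic abelian family unaccounted for.
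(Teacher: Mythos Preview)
Your proposal is correct and matches the paper's approach exactly: the paper does not give a standalone proof of this theorem but simply records that it follows from Theorem~\ref{main} together with \cite[Theorem~1.2, Theorem~1.3]{EKP2}, \cite[Theorem~4.2]{PV}, and \cite[Theorem~1]{Ry}, and your write-up is just a more explicit unpacking of that sentence. One small point worth tightening is the scope of \cite{Ry}: that paper is about $3$-groups, so for $p>3$ the non-abelian case is already excluded by P\"{o}schel's result (non-cyclic $\Rightarrow$ not Schur), and \cite{Ry} is only needed to rule out non-abelian $3$-groups --- you flag this yourself, so just adjust the wording accordingly.
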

	
The case of Schur $2$-groups was considered in \cite{MP3}, where the complete classification of abelian Schur $2$-groups was given. From \cite[Theorem 1.2]{MP3} it follows that 
any non-abelian Schur $2$-group of order at least $32$ is dihedral.

In the proof of Theorem \ref{main} we follow, in general, the scheme of the proof \cite[Theorem 10.1]{MP3}. At first, we establish that  regular and rational $S$-rings over $\mathbb{Z}_3\times \mathbb{Z}_{3^n}$ with trivial radical are schurian. Further, we prove that an $S$-ring over $\mathbb{Z}_3\times \mathbb{Z}_{3^n}$  with nontrivial radical  is a ``good'' generalized wreath product of two smaller $S$-rings. In this case the $S$-ring is schurian by the criterion of schurity for a generalized wreath product over abelian groups \cite[Theorem 10.2]{MP3}.
	
\section{Preliminaries}
	
	In this section we recall some definitions and facts concerned with $S$-rings and Schur groups. The most part of them is taken from \cite{MP3}. There are references before other statements.

	\begin{defn}

Let $G$ be a finite group, $\mathcal{R}$  a family of binary relations on $G$.
The pair \emph{$\mathcal{C}=\left(G,\mathcal{R}\right)$} is called a \emph{Cayley scheme} over $G$ if the following properties are satisfied:

$(1)$  $\mathcal{R}$ forms a partition of the set $G\times G$;

$(2)$  $Diag\left(G\times G\right)\in\mathcal{R}$;

$(3)$  $\mathcal{R}=\mathcal{R}^*$, i.\,e., if $R\in\mathcal{R}$ then $R^*=\{(h,g)\mid (g,h)\in R\}\in\mathcal{R}$;

$(4)$  if $R,~S,~T\in\mathcal{R}$, then the number $|\{h\in G:(f,h)\in R,~(h,g)\in S\}|$ does not depend on the choice of $(f,g)\in T$.

\end{defn}
	
	Let $\mathcal{A}$ be an  $S$-ring over $G$. We associate each basic set  $X\in \mathcal{S}(\mathcal{A})$  with the binary relation  $\{(a,xa) \mid a\in G, x\in X\}\subseteq G\times G$ and denote it by $R(X)$. The set of all such binary relations forms a partition $\mathcal{R}(\mathcal{S})$ of  $G\times G$.
	\begin{lemma}
$\mathcal{C}(\mathcal{A})=(G,\mathcal{R}(\mathcal{S}))$ is a Cayley scheme over $G$.
\end{lemma}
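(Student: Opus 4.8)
The plan is to verify directly the four defining properties of a Cayley scheme for the pair $\mathcal{C}(\mathcal{A})=(G,\mathcal{R}(\mathcal{S}))$, drawing on the three axioms of an $S$-ring together with the one ingredient not yet used, namely that $\mathcal{A}$ is closed under multiplication. Throughout I would exploit the elementary observation that $(a,b)\in R(X)$ if and only if $ba^{-1}\in X$.

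First, for the partition property, every pair $(a,b)\in G\times G$ can be written $b=(ba^{-1})a$ with $ba^{-1}$ a uniquely determined element of $G$. Since $\mathcal{S}$ partitions $G$, the element $ba^{-1}$ lies in exactly one basic set $X$, so $(a,b)\in R(X)$ for exactly one $X\in\mathcal{S}$; hence the relations $R(X)$ partition $G\times G$. The diagonal property is immediate from axiom $(1)$ of the $S$-ring: as $\{e\}\in\mathcal{S}$, we have $R(\{e\})=\{(a,a):a\in G\}=Diag(G\times G)\in\mathcal{R}(\mathcal{S})$. For the symmetry property I would compute, for a basic set $X$, that $R(X)^*=\{(xa,a):a\in G,\ x\in X\}$; writing $c=xa$ and $d=a$ gives $dc^{-1}=x^{-1}$, so $R(X)^*=R(X^{-1})$. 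Axiom $(2)$ guarantees $X^{-1}\in\mathcal{S}$, whence $R(X)^*\in\mathcal{R}(\mathcal{S})$ and $\mathcal{R}(\mathcal{S})=\mathcal{R}(\mathcal{S})^*$.

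The main point, and the only place where the multiplicative structure of $\mathcal{A}$ is needed, is the constancy of the intersection numbers. Fix basic sets $X,Y,T$ and a pair $(f,g)\in R(T)$, so that $gf^{-1}\in T$. For $h\in G$ the conditions $(f,h)\in R(X)$ and $(h,g)\in R(Y)$ read $hf^{-1}\in X$ and $gh^{-1}\in Y$; setting $x=hf^{-1}$ and $y=gh^{-1}$ yields $yx=gf^{-1}$, and conversely each such pair $(y,x)\in Y\times X$ recovers $h=xf$ uniquely. Thus the intersection number $|\{h\in G:(f,h)\in R(X),\ (h,g)\in R(Y)\}|$ equals $|\{(y,x)\in Y\times X:yx=gf^{-1}\}|$, which is precisely the coefficient of $gf^{-1}$ in the product $\underline{Y}\cdot\underline{X}$. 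Since $\mathcal{A}$ is a subring, $\underline{Y}\cdot\underline{X}\in\mathcal{A}$ and is therefore a $\mathbb{Z}$-linear combination of the elements $\underline{Z}$, $Z\in\mathcal{S}$, so this coefficient is constant as its indexing element ranges over any single basic set. In particular it is constant on $T$, hence the intersection number depends only on $T$ and not on the chosen pair $(f,g)\in R(T)$, which is exactly property $(4)$.

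I expect this last step to be the crux: the first three properties are purely formal consequences of the partition, the presence of $\{e\}$, and the inverse-closure of $\mathcal{S}$, whereas property $(4)$ is the translation of a composition of relations into a structure constant of $\mathcal{A}$, where the ring closure of $\mathcal{A}$ is what forces the required constancy. The only care needed is to track the order of multiplication (the count produces $\underline{Y}\cdot\underline{X}$ rather than $\underline{X}\cdot\underline{Y}$), but this does not affect the argument since constancy on basic sets holds for every product lying in $\mathcal{A}$.
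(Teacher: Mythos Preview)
Your proof is correct and complete; this is the standard verification, and the paper in fact states this lemma without proof, treating it as well known. There is nothing to compare against, so your argument simply supplies the omitted details.
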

	
	The relation $R(X)$ is called \emph{the basic relation} of the scheme $\mathcal{C}(\mathcal{A})$ corresponding to $X$.
	
	\begin{defn}
Cayley schemes  $\mathcal{C}=(G,\mathcal{R})$ and $\mathcal{C}^{'}=(G^{'},\mathcal{R}^{'})$ are called \emph{isomorphic} if there exists a bijection $f:G\rightarrow G^{'}$ such that  $\mathcal{R}^{'}=\mathcal{R}^f$, where $\mathcal{R}^f=\{R^f:~R\in\mathcal{R}\}$ and $R^f=\{(a^f,~b^f):~(a,~b)\in R\}$.
	\end{defn}
We say that $S$-rings $\mathcal{A}$ over $G$  and $\mathcal{A}^{'}$ over $G^{'}$ are \emph{isomorphic} if the Cayley schemes associated to $\mathcal{A}$  and $\mathcal{A}^{'}$ are isomorphic. Any isomorphism between these schemes is called the isomorphism from $\mathcal{A}$  onto $\mathcal{A}^{'}$. The
group  of all isomorphisms from $\mathcal{A}$ to itself has a normal subgroup $\aut(\mathcal{A})$
that consists of all isomorphisms $f$ such that $X^f = X$ for all $X\in \mathcal{S}(\mathcal{A})$; any such $f$ is called an  \emph{automorphism} of the $S$-ring $\mathcal{A}$. An $S$-ring $\mathcal{A}$ over $G$ is schurian if and only if   $\mathcal{S}(\mathcal{A})=Orb(\operatorname {\aut}(\mathcal{A})_e,G)$. Under \emph{a Cayley isomorphism} from $\mathcal{A}$ to $\mathcal{A}^{'}$ we mean  a group isomorphism $f:G\rightarrow G^{'}$ such that $\mathcal{S}(\mathcal{A})^f=\mathcal{S}(\mathcal{A^{'}})$. In this case $\mathcal{A}$   and $\mathcal{A}^{'}$  are called \emph{Cayley isomorphic}. Every Cayley isomorphism  is an isomorphism.

\begin{defn}
Let $\mathcal{A}$ be an $S$-ring over $G$. A subset $X \subseteq G$ is called \emph{an $\mathcal{A}$-set} if $\underline{X}\in \mathcal{A}$.
\end{defn}

\begin{defn}
Let $\mathcal{A}$ be an $S$-ring over $G$. A subgroup $H \leq G$ is called \emph{an $\mathcal{A}$-subgroup} if $\underline{H}\in \mathcal{A}$.
\end{defn}

\begin{lemm}\label{restrict}
Let $\mathcal{A}$ be an $S$-ring over $G$ and $H$ be an $\mathcal{A}$-subgroup. Then the module

$$\mathcal{A}_H=Span_{\mathbb{Z}}\left\{\underline{X}:~X\in\mathcal{S}(\mathcal{A}),~X\subseteq H\right\},$$

is an $S$-ring over $H$.
In addition, if $\mathcal{A}$ is schurian, then $\mathcal{A}_H$ is schurian too.

\end{lemm}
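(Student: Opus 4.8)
The plan is to take the obvious candidate partition $\mathcal{S}_H=\{X\in\mathcal{S}(\mathcal{A}):X\subseteq H\}$, verify that it turns $\mathcal{A}_H$ into an $S$-ring, and then, in the schurian case, realize $\mathcal{A}_H$ as $\mathcal{A}(\Delta^H,H)$ for a suitable permutation group $\Delta^H$ on $H$. The crucial preliminary observation, used throughout, is that because $\underline{H}\in\mathcal{A}$ and the basic sets partition $G$, every basic set $X\in\mathcal{S}(\mathcal{A})$ satisfies either $X\subseteq H$ or $X\cap H=\varnothing$. Indeed, writing $\underline{H}=\sum_{X\in\mathcal{S}(\mathcal{A})}c_X\underline{X}$ with $c_X\in\mathbb{Z}$ and comparing coefficients, the coefficient of $\underline{H}$ at each $g\in G$ equals $1$ if $g\in H$ and $0$ otherwise; since this coefficient is $c_X$ for every $g\in X$, each $X$ lies either entirely inside $H$ or entirely outside it. As the basic sets inside $H$ are disjoint and cover $H$, the collection $\mathcal{S}_H$ is a partition of $H$.

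First I would check that $\mathcal{S}_H$ satisfies the three axioms required of the partition of an $S$-ring. Condition $(1)$ holds since $\{e\}\in\mathcal{S}(\mathcal{A})$ and $e\in H$. Condition $(2)$ holds because $X\in\mathcal{S}_H$ implies $X^{-1}\in\mathcal{S}(\mathcal{A})$, and $X\subseteq H$ forces $X^{-1}\subseteq H$ as $H$ is a subgroup; condition $(3)$ is the definition of $\mathcal{A}_H$. It then remains to check that $\mathcal{A}_H$ is closed under multiplication, hence a subring of $\mathbb{Z}H$. For $X,Y\in\mathcal{S}_H$ the product $\underline{X}\,\underline{Y}$ lies in $\mathcal{A}$, so it is a $\mathbb{Z}$-combination of the elements $\underline{Z}$, $Z\in\mathcal{S}(\mathcal{A})$; but its support $XY$ is contained in $H$, so only those $Z$ with $Z\subseteq H$ can occur with nonzero coefficient. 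Thus $\underline{X}\,\underline{Y}\in\mathcal{A}_H$, proving that $\mathcal{A}_H$ is an $S$-ring over $H$.

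For the schurian part, suppose $\mathcal{A}=\mathcal{A}(\Gamma,G)$ with $G_{right}\leq\Gamma\leq\sym(G)$, so that $\mathcal{S}(\mathcal{A})=\orb(\Gamma_e,G)$. The key point is that $H$, being a union of basic sets by the observation above, is a union of $\Gamma_e$-orbits, and hence $\Gamma_e$ stabilizes $H$ setwise. Let $\Delta=\Gamma_{\{H\}}=\{\gamma\in\Gamma:H^\gamma=H\}$ be the full setwise stabilizer of $H$ in $\Gamma$, and let $\Delta^H\leq\sym(H)$ be its restriction to $H$. On one hand $\Delta$ contains every right translation by an element of $H$, since such a translation lies in $G_{right}\leq\Gamma$ and maps $H$ onto $Hh=H$; hence $H_{right}\leq\Delta^H$. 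On the other hand $\Gamma_e\leq\Delta$, whence the stabilizer of $e$ in $\Delta^H$ is exactly the restriction of $\Gamma_e$ to $H$, and its orbits on $H$ are precisely the $\Gamma_e$-orbits contained in $H$, that is, $\mathcal{S}_H$. Therefore $\mathcal{A}(\Delta^H,H)=\mathcal{A}_H$, and $\mathcal{A}_H$ is schurian.

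The routine verifications (the coefficient comparison, the support argument, and the closure $X^{-1}\subseteq H$) are straightforward; the one step requiring genuine care is the schurian part, specifically the choice of permutation group on $H$. The natural temptation is to use $\Gamma_e$ restricted to $H$, but this group fixes $e$ and so cannot contain $H_{right}$. The resolution, which is the main idea of the proof, is to pass to the setwise stabilizer $\Gamma_{\{H\}}$, which simultaneously contains $H_{right}$ and retains $\Gamma_e$ as its stabilizer of $e$; that $\Gamma_e$ already preserves $H$, a consequence of $\underline{H}\in\mathcal{A}$, is exactly what makes the orbit count come out to $\mathcal{S}_H$.
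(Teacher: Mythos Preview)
Your proof is correct. Note that the paper does not actually supply a proof of this lemma: it is stated in the Preliminaries section as one of the standard facts recalled from \cite{MP3}, so there is no argument in the paper to compare against. Your argument is the standard one and goes through as written; the only point worth flagging is the identification $(\Delta^H)_e=(\Gamma_e)^H$, which you assert. This holds because $e\in H$ forces $\gamma|_H$ to fix $e$ precisely when $\gamma$ does, so $(\Delta^H)_e=(\Delta_e)^H$, and then $\Delta_e=\Gamma_{\{H\}}\cap\Gamma_e=\Gamma_e$ since you already observed $\Gamma_e\leq\Gamma_{\{H\}}$.
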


\begin{defn}
Let $L,U$ be subgroups of a group $G$ and $L$ be normal in $U$. A section $U/L$ of $G$ is called an \emph{$\mathcal{A}$-section} if $U$ and $L$ are $\mathcal{A}$-subgroups.
\end{defn}

\begin{lemm}\label{ssection}
Let $F=U/L$ be an $\mathcal{A}$-section. Then the module
$$\mathcal{A}_F=Span_{\mathbb{Z}}\left\{\underline{X}^{\pi}:~X\in\mathcal{S}(\mathcal{A}),~X\subseteq U\right\},$$
where $\pi:U\rightarrow U/L$ is the quotient epimorphism, is an $S$-ring over $F$.

In addition, if $\mathcal{A}$ is schurian, then $\mathcal{A}_F$ is schurian too.
\end{lemm}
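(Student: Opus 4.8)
The plan is to reduce the statement to the special case of a genuine quotient and then to build the $S$-ring over $F$ directly from the images of the basic sets of $\mathcal{A}$. First I would dispose of the generality in $U$: by Lemma~\ref{restrict} the restriction $\mathcal{A}_U$ is an $S$-ring over $U$, and it is schurian whenever $\mathcal{A}$ is; moreover $\mathcal{S}(\mathcal{A}_U)=\{X\in\mathcal{S}(\mathcal{A}):X\subseteq U\}$, so the module $\mathcal{A}_F$ in the statement depends only on $\mathcal{A}_U$ and $L$. Since $L\subseteq U$ is an $\mathcal{A}$-subgroup, $\underline{L}$ is a sum of basic sets lying in $U$, hence $L$ is an $\mathcal{A}_U$-subgroup; replacing $\mathcal{A}$ by $\mathcal{A}_U$ and $G$ by $U$, it suffices to treat the case $U=G$, that is, $L\trianglelefteq G$ is an $\mathcal{A}$-subgroup, $\pi\colon G\to G/L=F$, and $\underline{X}^{\pi}=\underline{\pi(X)}$ is the indicator of the image set.

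The core step is to exhibit the partition of $F$ carried by $\mathcal{A}_F$. I would put $\mathcal{T}=\{\pi(X):X\in\mathcal{S}(\mathcal{A})\}$ and prove it is a partition of $F$. The decisive observation is that for every basic set $X$ the product $\underline{X}\cdot\underline{L}$ lies in $\mathcal{A}$, and every element of $\mathcal{A}$ has constant coefficients on the basic sets; in particular its support $XL=\pi^{-1}(\pi(X))$ is a union of basic sets. Hence if $\pi(X)$ and $\pi(X')$ meet, then $X'$ meets $XL$, forcing $X'\subseteq XL$ and $\pi(X')\subseteq\pi(X)$; by symmetry $\pi(X)=\pi(X')$, so $\mathcal{T}$ is a partition. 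The same product yields the multiplicity lemma I need for the ring structure: writing $\underline{X}\cdot\underline{L}=\sum_{Z}c^{Z}_{X,L}\underline{Z}$ and reading off the term $Z=X$ shows that $m_X:=|X\cap gL|$ is constant over the cosets $gL$ meeting $X$, equivalently $\sigma(\underline{X})=m_X\,\underline{\pi(X)}$, where $\sigma\colon\mathbb{Z}G\to\mathbb{Z}F$ is the ring epimorphism induced by $\pi$.

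With these facts the axioms are routine: $\{e\}\in\mathcal{S}(\mathcal{A})$ gives the singleton identity class in $\mathcal{T}$, and $\pi(X^{-1})=\pi(X)^{-1}$ gives closure under inversion, so $\mathcal{A}_F=\Span_{\mathbb{Z}}\{\underline{Y}:Y\in\mathcal{T}\}$ satisfies (1)--(3). Multiplicative closure is where I expect the only real friction: writing $\underline{\pi(X)}=m_X^{-1}\sigma(\underline{X})$ and using $\sigma(\underline{X}\,\underline{X'})=\sum_Z c^{Z}_{X,X'}m_Z\,\underline{\pi(Z)}$, the product $\underline{\pi(X)}\cdot\underline{\pi(X')}$ is a priori only a rational combination of the indicators $\underline{\pi(Z)}$; but it is also an integral vector that is constant on each part of $\mathcal{T}$, the $\underline{Y}$, $Y\in\mathcal{T}$, being linearly independent indicators of disjoint sets. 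Therefore its coordinates on the parts are genuine integers and the product lies in $\mathcal{A}_F$. Thus $\mathcal{A}_F$ is an $S$-ring over $F$.

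For schurity, assume $\mathcal{A}=\mathcal{A}(\Gamma,G)$ with $\Gamma=\aut(\mathcal{A})\geq G_{right}$. Since $L$ is an $\mathcal{A}$-subgroup, the relation $\{(a,b):ba^{-1}\in L\}$ is a union of basic relations and hence $\Gamma$-invariant, so the cosets of $L$ form a $\Gamma$-block system and $\Gamma$ induces $\bar\Gamma\leq\sym(F)$ with $F_{right}\leq\bar\Gamma$ (as $G_{right}$ induces $F_{right}$). It remains to identify $\orb(\bar\Gamma_{\bar e},F)$ with $\mathcal{T}$. Using $G_{right}\leq\Gamma$, any element of $\Gamma$ fixing the block $L$ factors as $\rho_\ell\circ\gamma_0$ with $\ell\in L$ and $\gamma_0\in\Gamma_e$; since $\Gamma_e$ preserves each basic set setwise, this shows that the $\bar\Gamma_{\bar e}$-orbit of a point $gL$ equals $\pi(\Gamma_e g)=\pi(X)$ for the basic set $X=\Gamma_e g$. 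Hence $\orb(\bar\Gamma_{\bar e},F)=\mathcal{T}=\mathcal{S}(\mathcal{A}_F)$, so $\mathcal{A}_F=\mathcal{A}(\bar\Gamma,F)$ is schurian. The main obstacle throughout is the constancy/multiplicity bookkeeping that makes $\mathcal{T}$ simultaneously a partition and closed under the induced multiplication; once that is in hand, the block-system argument for schurity is a direct verification.
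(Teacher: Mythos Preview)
The paper does not actually prove this lemma: it is stated in the preliminaries section as a known fact, with the blanket remark that ``the most part of them is taken from \cite{MP3}''. So there is no paper proof to compare against.

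Your argument is correct and is essentially the standard one. The reduction to $U=G$ via Lemma~\ref{restrict} is clean; the key point that $XL=\pi^{-1}(\pi(X))$ is an $\mathcal{A}$-set, hence a union of basic sets, is exactly what forces $\mathcal{T}=\{\pi(X):X\in\mathcal{S}(\mathcal{A})\}$ to be a partition and simultaneously gives the constancy of $m_X=|X\cap gL|$ (which is also recorded separately in the paper as Lemma~\ref{intersection}). Your handling of multiplicative closure---observing that the a priori rational coefficients on the disjoint indicators $\underline{Y}$ must in fact be the integer values of the convolution $\underline{\pi(X)}\cdot\underline{\pi(X')}\in\mathbb{Z}F$---is the right way to finish. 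For schurity, the block-system argument is standard; note only that you are implicitly using normality of $L$ in $G$ (so that left and right $L$-cosets agree and $\pi$ is a group epimorphism), which is guaranteed by the definition of an $\mathcal{A}$-section.
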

	
The following two lemmas give well-known properties of basic sets of an $S$-ring.

\begin{lemm}\label{eq}
For any basic sets $X,~Y,~Z$ of an $S$-ring $\mathcal{A}$ over $G$ denote by $c^Z_{X,Y}$ the number of distinct representations of an element $z \in Z$ in the form $z = xy$ with $x \in X$ and $y \in Y$. Then 
$$|Z|c^{Z^{-1}}_{X,Y}=|X|c^{X^{-1}}_{Y,Z}=|Y|c^{Y^{-1}}_{Z,X}.$$

\end{lemm}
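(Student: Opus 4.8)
The plan is to show that all three products count, in three different ways, the cardinality of a single set of triples. Introduce
$$T=\{(x,y,z)\in X\times Y\times Z:\ xyz=e\},$$
and compute $|T|$ by fixing in turn each of the three coordinates.

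First I would recall the only nonelementary fact needed: the structure constant $c^Z_{X,Y}$ is well defined, i.e.\ the number of factorizations $z=xy$ with $x\in X$, $y\in Y$ is the same for every $z\in Z$. This is precisely the coherence axiom $(4)$ in the definition of the Cayley scheme $\mathcal{C}(\mathcal{A})$ associated with $\mathcal{A}$, transported from relations back to elements. It is exactly this constancy that allows a sum over a basic set to be replaced by the size of the set times a single structure constant.

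Next I would carry out the three counts. Fixing $z\in Z$, the relation $xyz=e$ is equivalent to $xy=z^{-1}$; as $z$ ranges over $Z$ the element $z^{-1}$ ranges over $Z^{-1}$, so the number of admissible pairs $(x,y)$ equals $c^{Z^{-1}}_{X,Y}$ and summing gives $|T|=|Z|\,c^{Z^{-1}}_{X,Y}$. Fixing $x\in X$, rewrite $xyz=e$ as $yz=x^{-1}\in X^{-1}$, whence $|T|=|X|\,c^{X^{-1}}_{Y,Z}$. Fixing $y\in Y$, solve $z=y^{-1}x^{-1}$, i.e.\ $zx=y^{-1}\in Y^{-1}$, whence $|T|=|Y|\,c^{Y^{-1}}_{Z,X}$. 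Equating the three expressions for $|T|$ yields the claimed chain of identities.

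The computation is entirely routine once the well-definedness of the structure constants is in hand; the only point demanding attention — and the main, mild, obstacle — is the noncommutative bookkeeping. Since $G$ need not be abelian, the three cyclic consequences $xy=z^{-1}$, $yz=x^{-1}$, $zx=y^{-1}$ of $xyz=e$ must be derived in the correct order and matched with the correctly ordered constants $c^{Z^{-1}}_{X,Y}$, $c^{X^{-1}}_{Y,Z}$, $c^{Y^{-1}}_{Z,X}$, as dictated by the convention $z=xy$ defining $c^Z_{X,Y}$.
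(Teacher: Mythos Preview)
Your proof is correct. The paper does not supply its own proof of this lemma: it is listed among ``well-known properties of basic sets of an $S$-ring'' in the preliminaries and stated without argument, so your standard triple-counting via $T=\{(x,y,z)\in X\times Y\times Z:\ xyz=e\}$ is exactly the expected justification.
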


\begin{lemm}\label{aset}
For any basic sets $X,~Y$ of an $S$-ring $\mathcal{A}$ over $G$ the set $XY$ is an $\mathcal{A}$-set. Moreover, if $|X|=1$ or $|Y|=1$ then $XY$ is a basic set.

\end{lemm}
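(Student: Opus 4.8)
The plan is to derive both assertions directly from the two defining features of an $S$-ring: that $\mathcal{A}$ is closed under multiplication, and that $\{\underline{Z}:Z\in\mathcal{S}(\mathcal{A})\}$ is a $\mathbb{Z}$-basis of $\mathcal{A}$. First I would observe that, since $\mathcal{A}$ is a subring of $\mathbb{Z}G$, the product $\underline{X}\,\underline{Y}$ lies in $\mathcal{A}$. Expanding, $\underline{X}\,\underline{Y}=\sum_{g\in G}c_g\,g$, where $c_g$ is the number of ways to write $g=xy$ with $x\in X$ and $y\in Y$; thus $c_g>0$ if and only if $g\in XY$. Writing $\underline{X}\,\underline{Y}=\sum_{Z\in\mathcal{S}(\mathcal{A})}a_Z\underline{Z}$ in the basic-set basis and comparing coefficients shows that $c_g$ depends only on the basic set $Z\ni g$, namely $c_g=a_Z$. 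Consequently $XY$, being the support $\{g:c_g>0\}$, is the union of those basic sets $Z$ with $a_Z>0$, so $\underline{XY}=\sum_{a_Z>0}\underline{Z}\in\mathcal{A}$; that is, $XY$ is an $\mathcal{A}$-set.

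For the ``moreover'' part I would assume first that $|X|=1$, say $X=\{x\}$. Then $XY=xY$ and $|XY|=|Y|$. By the first part I may write $XY=Z_1\cup\dots\cup Z_k$ as a disjoint union of distinct basic sets, and it remains to prove $k=1$. Here I would exploit that $X^{-1}=\{x^{-1}\}$ is again a basic set by property $(2)$ of the partition. Multiplying on the left by $x^{-1}$ gives $Y=x^{-1}(XY)=x^{-1}Z_1\cup\dots\cup x^{-1}Z_k$, a disjoint union (left translation is injective) of nonempty sets, and each $x^{-1}Z_i=X^{-1}Z_i$ is an $\mathcal{A}$-set by the first part. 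Thus $Y$ is partitioned into $k$ nonempty $\mathcal{A}$-sets. But a basic set is an atom of the partition $\mathcal{S}(\mathcal{A})$: any $\mathcal{A}$-set contained in $Y$ is a union of basic sets lying inside $Y$, and the only such basic set is $Y$ itself. Hence $Y$ admits no splitting into two nonempty $\mathcal{A}$-sets, forcing $k=1$ and showing $XY$ is a single basic set.

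Finally, the case $|Y|=1$ reduces to the previous one by inversion: $(XY)^{-1}=Y^{-1}X^{-1}$, where $Y^{-1}$ is a singleton basic set and $X^{-1}$ is a basic set, so $Y^{-1}X^{-1}$ is basic by the $|X|=1$ case, and therefore $XY=(Y^{-1}X^{-1})^{-1}$ is basic by property $(2)$ once more. I do not expect a serious obstacle here; the only delicate point is the atomicity argument in the second paragraph, i.e.\ making precise that a basic set cannot be properly partitioned into nonempty $\mathcal{A}$-sets. This follows immediately once one recalls that every $\mathcal{A}$-set is by construction a union of basic sets and that distinct basic sets are disjoint.
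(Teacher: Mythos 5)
Your proof is correct and complete: the expansion of $\underline{X}\,\underline{Y}$ in the basis $\{\underline{Z}:Z\in\mathcal{S}(\mathcal{A})\}$ for the first claim, the translation-and-atomicity argument for $|X|=1$ (including the observation that a basic set cannot be partitioned into two nonempty $\mathcal{A}$-sets), and the reduction of the $|Y|=1$ case via $(XY)^{-1}=Y^{-1}X^{-1}$ are all sound. The paper itself states this lemma without proof, as a well-known property of basic sets going back to Schur and Wielandt, and your argument is exactly the standard one that the paper implicitly relies on.
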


\begin{lemm}  \label{intersection}
Let $H$ be an $\mathcal{A}$-subgroup of $G$ and $X \in \mathcal{S}(\mathcal{A})$. Then the number $|X\cap Hg|$ does not depend on $g \in G$ with $X \cap Hg \neq \varnothing$.

\end{lemm}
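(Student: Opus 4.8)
The plan is to realize the number $|X\cap Hg|$ as a coefficient of a single element of $\mathcal{A}$ and then to exploit the defining property that $\mathcal{A}$ is spanned over $\mathbb{Z}$ by its basic sets. Since $H$ is an $\mathcal{A}$-subgroup we have $\underline{H}\in\mathcal{A}$, and since $X$ is a basic set we have $\underline{X}\in\mathcal{A}$; because $\mathcal{A}$ is a subring of $\mathbb{Z}G$, the product $\Theta=\underline{H}\cdot\underline{X}$ again lies in $\mathcal{A}$. The whole argument will hinge on reading off the coefficients of this one element.

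First I would compute the coefficient of an arbitrary $g\in G$ in $\Theta$. Writing $\Theta=\sum_{h\in H,\,x\in X}hx$, the coefficient of $g$ equals the number of pairs $(h,x)$ with $hx=g$. For each $x\in X$ there is at most one admissible $h$, namely $h=gx^{-1}$, and it lies in $H$ precisely when $x\in Hg$ (here one uses $H^{-1}=H$). Hence the coefficient of $g$ in $\Theta$ is exactly $|X\cap Hg|$. This is the only genuine computation in the proof, and it is routine.

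Now I would invoke the definition of an $S$-ring: since $\Theta\in\mathcal{A}=\Span_{\mathbb{Z}}\{\underline{Y}:Y\in\mathcal{S}(\mathcal{A})\}$, we may write $\Theta=\sum_{Y}\lambda_{Y}\underline{Y}$ with $\lambda_{Y}\in\mathbb{Z}$, so the coefficient of a group element in $\Theta$ depends only on the basic set containing that element. The key observation, and the place where one must be slightly careful, is that every coset $Hg$ meeting $X$ has a representative inside $X$ itself: if $x\in X\cap Hg$ then $Hg=Hx$ with $x\in X$. Consequently the cosets meeting $X$ are exactly those of the form $Hx$ with $x\in X$, and for any such coset $|X\cap Hg|=|X\cap Hx|=\lambda_{X}$, because $x$ lies in the single basic set $X$ on which the coefficient of $\Theta$ is the constant $\lambda_X$. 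This common value is manifestly independent of $g$, which is the assertion. I do not expect a serious obstacle; the essential (but elementary) point is precisely this reduction, which lets constancy of the coefficients of $\Theta$ on the one basic set $X$ already force constancy of $|X\cap Hg|$ over all cosets meeting $X$.
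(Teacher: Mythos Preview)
Your proof is correct. The paper itself does not supply a proof of this lemma: it is listed among the preliminary facts in Section~2, which the authors say are taken from \cite{MP3}, so there is no argument in the paper to compare against. Your argument --- computing the coefficients of $\underline{H}\cdot\underline{X}$ and using that an element of $\mathcal{A}$ has constant coefficients on each basic set --- is the standard one for this well-known fact.
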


\begin{defn}
Let $G$ be a finite group and $X \subseteq G$. Then the subgroup of $G$ defined by $\rad(X)=\{g\in G| Xg=gX=X\}$ is called \emph{the radical} of $X$.
\end{defn}

\begin{lemm} \label{radbasic}
Let $\mathcal{A}$ be an $S$-ring  over $G$. If a set $X$ is  an $\mathcal{A}$-set,  then the groups $\langle X \rangle$ and $\rad(X)$ are  $\mathcal{A}$-subgroups of $G$.
\end{lemm}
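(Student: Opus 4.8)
The plan is to treat the two subgroups separately, in both cases reducing the claim to the closure properties of $\mathcal{A}$ together with the elementary principle that the coefficient level sets of any element of $\mathcal{A}$ are again $\mathcal{A}$-sets. This \emph{Schur--Wielandt principle} is immediate from the definition: writing $u=\sum_{X\in\mathcal{S}(\mathcal{A})}c_X\underline{X}$, the coefficient of $g$ in $u$ depends only on the basic set containing $g$, so for every $m\in\mathbb{Z}$ the set $\{g\in G:\ [u]_g=m\}$ is a union of basic sets and hence an $\mathcal{A}$-set. I will use freely that a union of $\mathcal{A}$-sets is an $\mathcal{A}$-set, that $X^{-1}$ is an $\mathcal{A}$-set whenever $X$ is, and that by Lemma \ref{aset} the product $XY$ of two $\mathcal{A}$-sets is an $\mathcal{A}$-set (equivalently $\underline{X}\cdot\underline{Y}\in\mathcal{A}$, since $\mathcal{A}$ is a subring, and the support of this product is exactly $XY$); the passage from basic sets to arbitrary $\mathcal{A}$-sets follows by writing each as a disjoint union of basic sets and distributing.

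For $\langle X\rangle$ I set $Y=X\cup X^{-1}\cup\{e\}$. Each of the three pieces is an $\mathcal{A}$-set, so $Y$ is an $\mathcal{A}$-set, and $Y$ generates the same subgroup as $X$. Because $e\in Y$ we have an increasing chain $Y\subseteq Y^2\subseteq Y^3\subseteq\cdots$ of $\mathcal{A}$-sets (each $Y^k$ is an $\mathcal{A}$-set by the product rule), all contained in the finite group $\langle X\rangle$. Hence the chain stabilises, say $Y^k=Y^{k+1}$ for some $k$. The stable set $H:=Y^k$ then satisfies $H\cdot H=Y^{2k}=Y^k=H$, contains $e$, and is closed under inversion (as $Y^{-1}=Y$), so it is a subgroup; since it contains $Y$ and lies in $\langle Y\rangle=\langle X\rangle$, we get $H=\langle X\rangle$. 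As $H$ is an $\mathcal{A}$-set, $\langle X\rangle$ is an $\mathcal{A}$-subgroup.

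For $\rad(X)$ I realise it as an intersection of two stabilisers. Consider $v=\underline{X}\cdot\underline{X^{-1}}\in\mathcal{A}$; the coefficient of $g$ in $v$ equals $|X\cap g^{-1}X|$, which attains its maximal value $|X|$ exactly when $gX=X$. By the Schur--Wielandt principle the left stabiliser $S_L=\{g:\ gX=X\}$ is an $\mathcal{A}$-set, and it is plainly a subgroup, hence an $\mathcal{A}$-subgroup; symmetrically, using $\underline{X^{-1}}\cdot\underline{X}$, whose coefficient at $g$ is $|X\cap Xg^{-1}|$, the right stabiliser $S_R=\{g:\ Xg=X\}$ is an $\mathcal{A}$-subgroup. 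By definition $\rad(X)=S_L\cap S_R$. Finally, the coefficient of $g$ in $\underline{S_L}+\underline{S_R}\in\mathcal{A}$ equals $2$ precisely on $S_L\cap S_R$, so one further application of the Schur--Wielandt principle shows that $\rad(X)=S_L\cap S_R$ is an $\mathcal{A}$-subgroup.

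I expect no serious obstacle here, as the statement is foundational. The two points needing care are the finiteness/stabilisation argument that produces $\langle X\rangle$ as a single power $Y^k$ (rather than an unbounded product), and the coefficient computation for $v=\underline{X}\cdot\underline{X^{-1}}$ together with the identification of its top level set with the stabiliser $S_L$; this last short calculation is the only genuinely computational step, and everything else is closure bookkeeping.
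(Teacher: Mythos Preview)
Your proof is correct. The paper states this lemma in its preliminaries section without proof, as a standard fact about $S$-rings (the section opens by noting that ``the most part of them is taken from \cite{MP3}''), so there is no paper proof to compare against. Your argument follows exactly the classical route: the Schur--Wielandt level-set principle, the stabilisation of the chain $Y\subseteq Y^2\subseteq\cdots$ for $\langle X\rangle$, and the identification of the left and right stabilisers as the top coefficient level sets of $\underline{X}\,\underline{X^{-1}}$ and $\underline{X^{-1}}\,\underline{X}$; the coefficient computation $[\,\underline{X}\,\underline{X^{-1}}\,]_g=|X\cap g^{-1}X|$ and the final intersection via $\underline{S_L}+\underline{S_R}$ are handled correctly.
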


\begin{theo}  \label{separat}
Let $X$ be a basic set of an $S$-ring $\mathcal{A}$ over  $G$. Suppose that $H\leq \rad(X \setminus H)$ for some group $H$ such that $X\cap H \neq \varnothing$ and $X \setminus H \neq \varnothing$. Then $X=\langle X \rangle \setminus \rad(X)$ with $\rad(X) \leq H \cap \langle X \rangle$. In this case $H$ is called a separating group.
\end{theo}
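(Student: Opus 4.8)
The plan is to fix notation, establish the containment $\rad(X)\le H\cap\grp{X}$, and then derive the structural identity $X=\grp{X}\setminus\rad(X)$ by proving that $X\cup\rad(X)$ is a subgroup. Write $R=\rad(X)$ and decompose the basic set as $X=X_{0}\sqcup Y$, where $X_{0}=X\cap H$ and $Y=X\setminus H$; by hypothesis both are nonempty and the condition $H\le\rad(Y)$ reads $HY=YH=Y$. Three remarks will be used throughout. Since $\{e\}$ is a basic set and $X$ meets both $H$ and its complement, $X$ is not a singleton, so $e\notin X$. For $r\in R$ one has $r^{-1}\in R$, hence $r^{-1}X=X$, and $r\in X$ would force $e=r^{-1}r\in X$; thus $X\cap R=\varnothing$. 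Finally, every $x_{0}\in X_{0}\subseteq H\le\rad(Y)$ satisfies $x_{0}Y=Yx_{0}=Y$, so the cross terms collapse: $X_{0}Y=YX_{0}=Y$.

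Next I would prove $\rad(X)\le H$. Take $r\in R$ and suppose $r\notin H$. Fixing $x_{0}\in X_{0}$ we get $rx_{0}\in RX=X$; since $x_{0}\in H$ and $r\notin H$, the product lies outside $H$, so $rx_{0}\in X\setminus H=Y$. As $x_{0}^{-1}\in H\le\rad(Y)$ gives $Yx_{0}^{-1}=Y$, we obtain $r=(rx_{0})x_{0}^{-1}\in Y\subseteq X$, contradicting $X\cap R=\varnothing$. Hence $R\le H$. Because $\rad(X)\le\grp{X}$ always holds (for $r\in R$, $x\in X$ one has $r=(rx)x^{-1}\in\grp{X}$), this gives the second assertion $\rad(X)\le H\cap\grp{X}$; moreover, picking $y\in Y$ yields $Hy\subseteq Y\subseteq\grp{X}$, so in fact $H\le\grp{X}$.

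For the structural identity it is enough to show that $M:=X\cup R$ is a subgroup: it lies in $\grp{X}$ and contains $X$, so if it is a group then $M=\grp{X}$, and since the union is disjoint we conclude $X=\grp{X}\setminus R$. As $R$ is a subgroup with $RX=XR=X$, one computes $MM=XX\cup M$, so $M$ is a subgroup exactly when $XX\subseteq M$. Using $X=X_{0}\sqcup Y$ together with $X_{0}Y=YX_{0}=Y$ gives $XX=X_{0}X_{0}\cup Y\cup YY$, and since $Y\subseteq X\subseteq M$ everything reduces to the two inclusions $X_{0}X_{0}\subseteq M$ and $YY\subseteq M$.

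This last step is where I expect the real difficulty. The set $XX$ is an $\cA$-set by Lemma~\ref{aset}, hence a disjoint union of basic sets, and the task is to show that no basic set other than $X$ itself and pieces of $R$ occurs in it. The structural input is that $XX\setminus H$ is $H$-biinvariant: indeed $H(XX)=H\cup Y\cup YY$ agrees with $XX$ off $H$, and symmetrically on the right, so in particular $YY$ is $H$-biinvariant and $YY\cap H\in\{\varnothing,H\}$. To exploit this I would pass, via Lemma~\ref{ssection}, to the section $\grp{X}/R$, under which $R$ becomes trivial, $H$ maps to a proper nontrivial subgroup, and all hypotheses persist; the goal then becomes to show that $X_{0}\cup\{e\}$ is a subgroup of $H$ and that $YY\subseteq X\cup\{e\}$. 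The balance relations of Lemma~\ref{eq}, applied to each basic set $Z\subseteq XX$ and combined with the $H$-biinvariance above, should exclude any extraneous basic set and confine $X_{0}X_{0}$ and $YY$ inside $X\cup R$. Controlling these ``diagonal'' products $X_{0}X_{0}$ and $YY$ — that is, ruling out new basic sets lying inside $H$ or straddling it — is the crux of the argument.
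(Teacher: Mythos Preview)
The paper does not prove Theorem~\ref{separat}: it is stated without proof in Section~2 among the preliminary results imported from~\cite{MP3}, so there is no in-paper argument to compare against. Assessing your attempt on its own terms, everything through the reduction is correct. The observation $X\cap R=\varnothing$, the chase showing $R\le H$ (pick $r\notin H$, push $rx_0$ into $Y$, then pull $r$ back into $Y$ via $Yx_0^{-1}=Y$), and the reformulation of $X=\langle X\rangle\setminus R$ as the closure condition $XX\subseteq X\cup R$ are all clean; the simplification $XX=X_0X_0\cup Y\cup YY$ using $X_0Y=YX_0=Y$ is also right.

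The gap is exactly the one you flag, and it is a genuine gap, not a routine verification. You do not prove $X_0X_0\subseteq M$ or $YY\subseteq M$; the final paragraph is a programme rather than an argument. The ingredients you list --- $H$-biinvariance of $XX\setminus H$, passage to $\langle X\rangle/R$, the balance relations of Lemma~\ref{eq} --- are plausible, but the clause ``should exclude any extraneous basic set and confine $X_0X_0$ and $YY$ inside $X\cup R$'' is where the entire content of the theorem sits, and nothing you have written forces it. Two further points you would need to address: first, the statement does not assume $G$ abelian, so invoking Lemma~\ref{ssection} to pass to $\langle X\rangle/R$ requires $R\trianglelefteq\langle X\rangle$, which is not automatic; second, since the conclusion forces $X=X^{-1}$ (the set $\langle X\rangle\setminus R$ being inverse-closed), establishing $X=X^{-1}$ directly from the hypotheses might be a more tractable first sub-goal than attacking $XX\subseteq M$ head-on, and your sketch does not obviously yield it.
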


\begin{defn}
An $S$-ring $\mathcal{A}$ over $G$ is called \emph{primitive} if $G$ has no nontrivial proper $\mathcal{A}$-subgroups. Otherwise $\mathcal{A}$ is called imprimitive.
\end{defn}

	\begin{lemm}  \label{bgroups}
	For every prime $p$ a primitive $S$-ring over  $\mathbb{Z}_{p^a}\times \mathbb{Z}_{p^b},~a>b\geq 0,~a>1,$ has rank $2$.
	\end{lemm}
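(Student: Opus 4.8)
The plan is to show that a primitive $\mathcal{A}$ has a single nontrivial basic set, namely $G\setminus\{e\}$. First I would extract the only structural consequence of primitivity that is available. By Lemma \ref{radbasic}, for every basic set $X$ the sets $\langle X\rangle$ and $\rad(X)$ are $\mathcal{A}$-subgroups, so by primitivity each of them equals $\{e\}$ or $G$. For $X\ne\{e\}$ this forces $\langle X\rangle=G$ and $\rad(X)=\{e\}$, since a nontrivial radical would produce a proper nontrivial $\mathcal{A}$-subgroup and $\langle X\rangle=\{e\}$ is impossible. In particular no nontrivial basic set lies in a proper subgroup, so each one meets the set of elements of maximal order $p^a$: the elements of order less than $p^a$ form the proper subgroup $\Omega=\{g\in G\mid g^{p^{a-1}}=e\}$ of index $p$, and $X\subseteq\Omega$ would contradict $\langle X\rangle=G$.

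The hypothesis $a>b$ now enters through a distinguished subgroup of order $p$. Set $P=\{g^{p^{a-1}}\mid g\in G\}$. Since $a-1\ge b$, the second direct factor is annihilated by $g\mapsto g^{p^{a-1}}$, so $P$ is cyclic of order $p$; moreover $g^{p^{a-1}}\ne e$ for every $g$ of order $p^a$, so $P\le\langle g\rangle$ for every element $g$ of maximal order. Thus $P$ is the unique minimal subgroup sitting inside every cyclic subgroup of order $p^a$, a feature special to the case $a>b$ (and false for $\mathbb{Z}_p\times\mathbb{Z}_p$, where there is no canonical such $P$). I would then invoke Schur's classical multiplier theorem: because $|G|$ is a power of $p$, for every $m$ coprime to $p$ the map $g\mapsto g^m$ sends basic sets to basic sets. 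Its orbits on $G$ are precisely the sets of generators of the cyclic subgroups, so $P\setminus\{e\}$ is a single orbit, and the multiplier group permutes transitively the basic sets meeting $P\setminus\{e\}$; this lets me reduce the counting below to a single basic set meeting $P$.

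The endgame is Theorem \ref{separat}. Fix a nontrivial basic set $X$ with $X\cap P\ne\varnothing$ (one exists, as $P\setminus\{e\}$ is partitioned by basic sets); by the first paragraph $\langle X\rangle=G$, so $X\not\subseteq P$ and both $X\cap P$ and $X\setminus P$ are nonempty. If I can verify the separating hypothesis $P\le\rad(X\setminus P)$ — that is, that $X\setminus P$ is a union of cosets of $P$ — then Theorem \ref{separat} yields $X=\langle X\rangle\setminus\rad(X)=G\setminus\{e\}$, forcing $\mathcal{A}$ to have rank $2$. Establishing $P\le\rad(X\setminus P)$ is the crux, and I expect it to be the main obstacle. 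I would attack it with the Schur--Wielandt principle applied to the products $\underline{X}\cdot\underline{X^{-1}}$ and $\underline{X}\cdot\underline{P'}$, where $P'=P\setminus\{e\}$, extracting from their coefficient level-sets $\mathcal{A}$-sets that must be $P$-invariant, combined with the constancy of the intersection numbers $|X\cap Pg|$ from Lemma \ref{intersection}, the structure-constant identities of Lemma \ref{eq}, and the product rule of Lemma \ref{aset}. This is exactly the step that uses $a>b$: because $P$ lies in every maximal cyclic subgroup, translation by $P$ pairs up the maximal-order elements of $X$, whereas for elementary abelian groups of rank two no such canonical $P$ exists and the conclusion genuinely fails, in line with Wielandt's counterexamples. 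An equivalent way to organize the same computation is to prove directly that $P$ (or the index-$p$ subgroup $\Omega$) is an $\mathcal{A}$-subgroup; by primitivity this is possible only when there is nothing left to partition, i.e.\ only when $\mathcal{A}$ already has rank $2$.
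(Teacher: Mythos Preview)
The paper does not give a self-contained argument here; it simply cites Wielandt's Theorems~25.3 and~25.5. So the comparison is really between your sketch and the classical proof.

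Your reduction is correct up to the point you yourself flag as the crux. For the cyclic case $b=0$ your plan in fact goes through: if $c\in X\cap(P\setminus\{e\})$ and $x\in X\setminus P$ has order $p^{k}>p$, then the multiplier $m=1+p^{\,k-1}$ fixes $c$ (so $X^{(m)}=X$ by Theorem~\ref{burn}) and sends $x$ to $x\cdot x^{p^{\,k-1}}$; in a cyclic $p$-group $x^{p^{\,k-1}}$ generates $P$, so iterating gives $xP\subseteq X$. Thus $P\le\rad(X\setminus P)$ and Theorem~\ref{separat} finishes as you say.

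For $b\ge 1$, however, there is a genuine gap that your listed tools do not close. By primitivity $P$ is \emph{not} an $\mathcal{A}$-subgroup (otherwise it would be a proper nontrivial one), so Lemma~\ref{intersection} is unavailable with $H=P$, and $\underline{P'}\notin\mathcal{A}$, so the Schur--Wielandt level-set trick does not apply to $\underline{X}\cdot\underline{P'}$. The multiplier argument above only yields $xP\subseteq X$ when $|x|>p^{b}$, because for $|x|=p^{k}$ with $k\le b$ the element $x^{p^{\,k-1}}$ need not lie in $P$ (indeed every multiplier $m\equiv 1\pmod p$ fixes all elements of order $\le p^{b}$). So if $X$ contains an element of order at most $p^{b}$ outside $P$, your separating hypothesis $P\le\rad(X\setminus P)$ is not yet established, and nothing in the proposal addresses this case. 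Wielandt's proof handles precisely this obstruction by a different (counting\,/\,orbit-length) argument rather than a direct separating-subgroup computation; as written, your sketch is a plan for the cyclic case and an unfinished outline for $b\ge 1$.
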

	
	\begin{proof}
	Follows from  the proofs of \cite[Theorem 25.3,~Theorem 25.5]{Wi}
	\end{proof}

	\begin{defn}
	Let $K \leq \aut(G)$. Then the orbit partition $Orb(K, G)$ defines an $S$-ring $\mathcal{A}$ over $G$.  In this case $\mathcal{A}$ is called \emph{cyclotomic} and denoted by $Cyc(K,G)$.
	\end{defn}
	
	For any set $X \subseteq G$ and $m\in \mathbb{Z}$ denote by $X^{(m)}$ the set $\{x^m: x \in X\}$. We say that two sets $X$ and $Y$ are \emph{rationally conjugate} if there exists $m\in \mathbb{Z}$ coprime to $n$ such that $Y=X^{(m)}$.

Further we formulate two theorems on multipliers of $S$-rings over abelian groups. Both of them were proved by Schur in \cite{Schur}.

\begin{theo} \label{burn}
Let $\mathcal{A}$ be an $S$-ring over an abelian group $G$ of order $n$. Then $X^{(m)}\in \mathcal{S}(\mathcal{A})$  for all $X\in \mathcal{S}(\mathcal{A})$ and all $m\in \mathbb{Z}$ coprime to $n$.
\end{theo}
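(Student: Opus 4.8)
The plan is to reduce to the case where $m=p$ is a prime coprime to $n$, and then exploit the Frobenius endomorphism of the group algebra in characteristic $p$; the abelianness of $G$ enters exactly at that point.

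First I would carry out the reduction. Put $e=\exp(G)$. Since $G$ is abelian, a prime divides $n$ if and only if it divides $e$, so being coprime to $n$ is the same as being coprime to $e$. For $x\in G$ one has $x^m=x^{m'}$ whenever $m\equiv m'\pmod e$, hence $X^{(m)}$ depends only on the residue of $m$ modulo $e$. Given any $m$ coprime to $n$, Dirichlet's theorem supplies a prime $p\equiv m\pmod e$ (automatically coprime to $n$), and then $X^{(m)}=X^{(p)}$. Thus it suffices to treat a prime $p$ coprime to $n$.

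Next comes the Frobenius step. As $G$ is abelian, the group algebra $\mathbb{F}_pG$ is commutative, so $\alpha\mapsto\alpha^p$ is a ring endomorphism of it (all multinomial coefficients other than the extreme ones are divisible by $p$). Reducing coefficients modulo $p$ yields the congruence $\underline{X}^{\,p}\equiv\underline{X^{(p)}}\pmod p$ in $\mathbb{Z}G$, where I use that $p\nmid e$ makes $x\mapsto x^p$ injective, so $\underline{X^{(p)}}=\sum_{x\in X}x^p$ has all coefficients equal to $1$. Because $\mathcal{A}$ is a subring of $\mathbb{Z}G$ containing $\underline{X}$, the element $\underline{X}^{\,p}$ lies in $\mathcal{A}$ and can be written as $\sum_{Y\in\mathcal{S}(\mathcal{A})}c_Y\underline{Y}$ with $c_Y\in\mathbb{Z}$. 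Comparing the two expressions modulo $p$ shows that for each basic set $Y$ and each $g\in Y$ one has $[g\in X^{(p)}]\equiv c_Y\pmod p$; since this indicator takes only the values $0$ and $1$, which are distinct modulo $p$, it is constant on $Y$. Hence $Y\subseteq X^{(p)}$ or $Y\cap X^{(p)}=\varnothing$ for every $Y$, so $X^{(p)}$ is a union of basic sets and $\underline{X^{(p)}}\in\mathcal{A}$.

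It remains to upgrade ``union of basic sets'' to ``single basic set,'' the step needing a little care. Observe first that an $\mathcal{A}$-set is exactly a union of basic sets: if $\underline{X}\in\mathcal{A}$ has all coefficients in $\{0,1\}$, then writing $\underline{X}=\sum_Y c_Y\underline{Y}$ forces $c_Y\in\{0,1\}$ and $X=\bigcup_{c_Y=1}Y$. Thus the basic sets are precisely the minimal nonempty $\mathcal{A}$-sets. Now $\sigma_p\colon g\mapsto g^p$ is a bijection of $G$ sending every $\mathcal{A}$-set to an $\mathcal{A}$-set, by what was just proved. Choosing a prime $q$ with $pq\equiv 1\pmod e$ (again by Dirichlet), the inverse $\sigma_p^{-1}=\sigma_q$ likewise sends $\mathcal{A}$-sets to $\mathcal{A}$-sets. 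A bijection preserving $\mathcal{A}$-sets in both directions preserves inclusion among them, hence carries minimal nonempty $\mathcal{A}$-sets to minimal nonempty $\mathcal{A}$-sets; that is, $\sigma_p$ permutes the basic sets, and in particular $X^{(p)}=\sigma_p(X)\in\mathcal{S}(\mathcal{A})$. I expect the main obstacle to be precisely the middle step: extracting the set-theoretic conclusion that $X^{(p)}$ is a union of basic sets from the purely ring-theoretic fact $\underline{X}^{\,p}\in\mathcal{A}$. This is the Schur--Wielandt principle, and it works only because reduction modulo $p$ collapses the unknown structure constants $c_Y$ onto $\{0,1\}$; the commutativity of $\mathbb{F}_pG$, i.e. the hypothesis that $G$ be abelian, is what makes the Frobenius congruence available at all.
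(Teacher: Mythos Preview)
Your argument is correct and is essentially the classical Schur proof; the paper does not give its own proof of this statement at all, merely citing Schur's 1933 paper (see the sentence preceding the theorem: ``Both of them were proved by Schur in \cite{Schur}'').

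A couple of minor remarks. The appeal to Dirichlet is sound but unnecessary: for positive $m$ coprime to $n$, each prime factor of $m$ is coprime to $n$, so one may reach $X^{(m)}$ by iterating the prime case; negative $m$ reduce to positive ones via the $S$-ring axiom $X^{-1}\in\mathcal{S}(\mathcal{A})$. Likewise, the upgrade step can be done without producing an inverse prime $q$: the map $\sigma_p$ is a bijection of $G$, so the images $X^{(p)}$ for $X\in\mathcal{S}(\mathcal{A})$ form a partition of $G$ into $|\mathcal{S}(\mathcal{A})|$ nonempty $\mathcal{A}$-sets, which forces each of them to be a single basic set. These are cosmetic simplifications; your version is already a complete proof.
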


	\begin{theo} \label{sch}
	Let $\mathcal{A}$ be an $S$-ring over an abelian group $G$. Then given a prime $p$ dividing the order of $G$, the set
	
$$X^{[p]}=\{x^p:x\in X,~|X\cap Hx|\not\equiv~0~\mod~p\},$$
	where $H=\{g\in G:g^p=e\}$, is an $\mathcal{A}$-set for all $\mathcal{A}$-sets $X$.
	
	\end{theo}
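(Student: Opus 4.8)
The plan is to exploit the fact that, by definition, $\mathcal{A}$ is a \emph{subring} of $\mathbb{Z}G$, so that $\underline{X}^{\,p}\in\mathcal{A}$ whenever $\underline{X}\in\mathcal{A}$, and then to recognise $X^{[p]}$ as the support of the reduction of $\underline{X}^{\,p}$ modulo $p$. Concretely, I would establish two things: (a) the coefficient of a group element $g$ in $\underline{X}^{\,p}$ is congruent modulo $p$ to $|\{x\in X:x^{p}=g\}|$, so that the elements whose coefficient is indivisible by $p$ are exactly the elements of $X^{[p]}$; and (b) the coefficient function of any element of $\mathcal{A}$ is constant on each basic set, so that a subset of $G$ cut out by a congruence condition on these coefficients is automatically a union of basic sets. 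Combining (a) and (b) then forces $X^{[p]}$ to be a union of basic sets, i.e. an $\mathcal{A}$-set.

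For step (a) I would compute the coefficient of $g$ in $\underline{X}^{\,p}=\bigl(\sum_{x\in X}x\bigr)^{p}$ as the number of ordered $p$-tuples $(x_{1},\dots,x_{p})\in X^{p}$ with $x_{1}\cdots x_{p}=g$. Since $G$ is abelian, the cyclic group $\mathbb{Z}_{p}$ acts on these tuples by rotation and preserves the product, hence preserves the relevant set of tuples; its orbits have size $1$ or $p$, and the fixed points are precisely the constant tuples $(x,\dots,x)$ with $x\in X$ and $x^{p}=g$. Therefore, modulo $p$, the coefficient of $g$ equals $|\{x\in X:x^{p}=g\}|$. For any $z$ with $z^{p}=g$ this count is $|X\cap Hz|$, because $x^{p}=z^{p}$ is equivalent to $xz^{-1}\in H$; in particular, if $g=x^{p}$ with $x\in X$, then the coefficient is $\equiv|X\cap Hx|\pmod{p}$. (Equivalently, one may pass to the group algebra $\mathbb{F}_{p}G$ and use that, for abelian $G$, the Frobenius map $u\mapsto u^{p}$ is $\mathbb{F}_{p}$-linear, so that $\underline{X}^{\,p}=\sum_{x\in X}x^{p}$ there.) Reading off the support: an element $g$ has coefficient nonzero modulo $p$ precisely when $|\{x\in X:x^{p}=g\}|\not\equiv 0\pmod{p}$; since this forces $g=x^{p}$ for some $x\in X$ with $|X\cap Hx|\not\equiv 0\pmod{p}$, and the converse is immediate, the support of $\underline{X}^{\,p}$ modulo $p$ is exactly $X^{[p]}$.

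For step (b), writing $\underline{X}^{\,p}=\sum_{Y\in\mathcal{S}(\mathcal{A})}c_{Y}\underline{Y}$ with $c_{Y}\in\mathbb{Z}$ shows that the coefficient of $g$ in $\underline{X}^{\,p}$ depends only on the basic set $Y$ containing $g$, since the basic sets partition $G$. Hence, if one element of a basic set $Y$ has coefficient indivisible by $p$, then every element of $Y$ does, and conversely. By step (a) this means that $Y\cap X^{[p]}\neq\varnothing$ forces $Y\subseteq X^{[p]}$, so $X^{[p]}$ is a union of basic sets and $\underline{X^{[p]}}=\sum_{Y\subseteq X^{[p]}}\underline{Y}\in\mathcal{A}$, which is exactly the assertion that $X^{[p]}$ is an $\mathcal{A}$-set.

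The main obstacle is the bookkeeping in step (a): one must verify carefully that the count $|\{x\in X:x^{p}=g\}|$ is correctly identified with $|X\cap Hz|$ independently of the chosen preimage $z$, and that the resulting congruence governing the support matches the definition of $X^{[p]}$ exactly, including the point that the existence of a preimage in $X$ is automatic once the count is nonzero modulo $p$. The ring-theoretic input ($\underline{X}^{\,p}\in\mathcal{A}$) and the constancy of coefficients on basic sets in step (b) are both immediate from the $S$-ring axioms; essentially all the content lies in matching the Frobenius/rotation computation to the combinatorial definition of $X^{[p]}$.
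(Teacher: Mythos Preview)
Your argument is correct and is essentially the classical proof of this result. Note, however, that the paper does not supply its own proof of this theorem: it is stated in the preliminaries as one of Schur's theorems on multipliers and is attributed to Schur's 1933 paper (the reference \cite{Schur}), with no proof given in the present paper. So there is nothing to compare against here beyond observing that your write-up reproduces the standard argument---compute $\underline{X}^{\,p}$, use the cyclic-rotation/Frobenius trick to identify its coefficients modulo $p$ with the fibre sizes $|X\cap Hx|$, and then read off the support as a union of basic sets because $\underline{X}^{\,p}\in\mathcal{A}$.
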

	
	The following lemma is taken from \cite{Po}.
	
	\begin{lemm} \label{cyclering}
	Let $\mathcal{A}$ be an $S$-ring over cyclic $p$-group $G$, where $p$ is an odd prime. Then for every $X\in \mathcal{S}(\mathcal{A})$ one of the following statements holds:
	
	$(1)$ $X\in Orb(K,G)$ for some $K\leq \aut(G)$;
	
	$(2)$ $X=\langle X \rangle \setminus \rad(X)$.
	
	\end{lemm}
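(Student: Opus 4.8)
The plan is to reduce to a basic set of full support and trivial radical, and then to separate two regimes using the multiplier theorems together with the separating-group theorem. Throughout I write $|G|=p^n$, let $H_0<H_1<\cdots<H_n=G$ be the (unique) chain of subgroups of $G$, and put $L_i=H_i\setminus H_{i-1}$ for the set of elements of order $p^i$. By Lemma \ref{radbasic} the subgroups $U=\langle X\rangle$ and $R=\rad(X)$ are $\mathcal{A}$-subgroups, hence $U=H_s$ and $R=H_r$ with $r\le s$ because the subgroups of $G$ form a chain. Restricting to $\mathcal{A}_U$ by Lemma \ref{restrict} and then passing to the section $U/R$ by Lemma \ref{ssection}, I may assume $\langle X\rangle=G$ and $\rad(X)=\{e\}$; in particular $e\notin X$. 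To see that this reduction is harmless I would note that the natural maps $\aut(G)\to\aut(H_s)$ and $\aut(G)\to\aut(G/R)$ are surjective and that the kernel of the second moves a generator through its entire $R$-coset, so that an orbit of a subgroup of $\aut$ in the section lifts to an orbit of a subgroup of $\aut(G)$ in $G$, while the shape $\langle\cdot\rangle\setminus\rad(\cdot)$ is plainly preserved in both directions.

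Next, by Theorem \ref{burn} the assignment $X\mapsto X^{(m)}$ permutes $\mathcal{S}(\mathcal{A})$ for every $m$ coprime to $p$, so $K=\{\,m:\,X^{(m)}=X\,\}$ defines a subgroup of $\aut(G)$ and $X$ is a union of $K$-orbits. If $X$ is a single $K$-orbit we are in case $(1)$, so I assume it is a union of at least two $K$-orbits and aim to prove $X=G\setminus\{e\}$, which under the standing reduction is exactly case $(2)$. Let $m$ be minimal with $X\cap L_m\ne\varnothing$, so that $X\cap H_m=X\cap L_m\ne\varnothing$ and, since $\langle X\rangle=G$, also $X\not\subseteq H_m$ whenever $m<n$. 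I would then try to verify the hypothesis $H_m\le\rad(X\setminus H_m)$ of Theorem \ref{separat}, that is, that the part of $X$ consisting of elements of order exceeding $p^m$ is a union of $H_m$-cosets; Theorem \ref{separat} would then yield $X=\langle X\rangle\setminus\rad(X)=G\setminus\{e\}$.

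The main obstacle is precisely establishing this coset-closure. The essential structural input, which fails for $p=2$, is that $\aut(G)$ is cyclic: combined with Schur's theorem \ref{sch} applied to the layer maps $x\mapsto x^p$ and with the $K$-invariance of $X$, the cyclicity should rigidify the distribution of $X$ across the layers $L_i$ and force the required coset-closure whenever $X$ is not already a single orbit inside one layer. The delicate boundary subcase is $X\subseteq L_n$, when all elements of $X$ have maximal order: here $H_{n-1}\cap X=\varnothing$, so Theorem \ref{separat} does not apply directly, and one must instead argue, using the regular action of $\aut(G)$ on the set of generators of $G$ together with the multiplier constraints coming from Theorem \ref{burn} and the product rule of Lemma \ref{aset}, that a basic set contained in $L_n$ cannot be a proper union of cosets of $K$.
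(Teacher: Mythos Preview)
The paper does not prove this lemma; it is quoted from P\"oschel~\cite{Po} with no argument given, so there is no in-paper proof to compare against. Concerning your sketch: the reduction to $\langle X\rangle=G$, $\rad(X)=\{e\}$ is correct, and the case you call ``delicate'' ($X\subseteq L_n$) is in fact immediate from Theorem~\ref{burn}. Fix $g_0\in X$ and write $X=\{g_0^{\,s}:s\in\tilde S\}$ with $\tilde S\subseteq(\mathbb Z/p^n\mathbb Z)^*$ and $1\in\tilde S$; then $K=\{m:m\tilde S=\tilde S\}$, and for $m\notin K$ the basic sets $X$ and $X^{(m)}$ are disjoint, i.e.\ $m\tilde S\cap\tilde S=\varnothing$. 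If some $s\in\tilde S\setminus K$ existed then $s\cdot 1=s\in s\tilde S\cap\tilde S$, a contradiction; hence $\tilde S=K$ and $X$ is a single $K$-orbit.

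The substantive gap is the non-regular case $m<n$. You want $H_m\le\rad(X\setminus H_m)$ in order to apply Theorem~\ref{separat}, but the multiplier argument you invoke (the cyclic analogue of Lemma~\ref{oarg}) shows only that an element $a\in X$ of order $p^k>p^m$ satisfies $aH_{k-m}\subseteq X$; this gives $H_m$-closure only when $k\ge 2m$, and says nothing for intermediate orders $m<k<2m$. Your appeal to Theorem~\ref{sch} and the cyclicity of $\aut(G)$ to close this gap is a hope, not an argument. This is precisely where the content of P\"oschel's proof lies, and it does not reduce to a single application of Theorem~\ref{separat}; one needs an induction on $n$ through the quotient $G/H_1$ together with a layer-by-layer analysis via the operator $X^{[p]}$. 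As written, you have isolated the correct dichotomy but left the load-bearing step unproved.
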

	
	\begin{lemm}\label{cyclesection}
Let $\mathcal{A}$ be a cyclotomic $S$-ring over cyclic $3$-group $G$ and $S$ be an $\mathcal{A}$-section. Suppose that $\rad(\mathcal{A})=e$. Then $|\rad(\mathcal{A}_S)|=1.$

\end{lemm}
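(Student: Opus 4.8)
The plan is to exploit that $\mathcal A$ is cyclotomic in order to reduce everything to a computation with the acting automorphism group. Since $\mathcal A$ is cyclotomic, $\mathcal A=Cyc(K,G)$ for some $K\leq\aut(G)$, and because $\aut(\mathbb Z_{3^n})\cong\mathbb Z_{3^n}^{*}\cong\mathbb Z_2\times\mathbb Z_{3^{n-1}}$ is cyclic, $K$ is cyclic. The subgroups of $G$ form a single chain $e=G_0<G_1<\dots<G_n=G$; each $G_i$ is characteristic, hence $K$-invariant, hence an $\mathcal A$-subgroup, and these are all the $\mathcal A$-subgroups. Thus every $\mathcal A$-section has the form $S=G_j/G_i$ with $0\leq i\leq j\leq n$. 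First I would record that $\mathcal A_S$ is again cyclotomic: writing $\pi:G_j\to G_j/G_i$ for the quotient map and using Lemma \ref{intersection} to see that $|X\cap G_i g|$ is constant on the support of $X$, one obtains $\underline{X}^{\pi}=m_X\,\underline{\pi(X)}$ for every basic set $X\subseteq G_j$, so that the basic sets of $\mathcal A_S$ are exactly the orbits of $\bar K$ on $S$, where $\bar K\leq\aut(S)$ is the image of $K$ under the (surjective) reduction map $\aut(G)\to\aut(G_j)\to\aut(S)$. Hence $\mathcal A_S=Cyc(\bar K,S)$.

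The heart of the argument, and the step I expect to be the main obstacle, is to express triviality of the radical purely in terms of $K$: I would prove that for a cyclotomic $S$-ring over a cyclic $3$-group one has $\rad(\mathcal A)=e$ if and only if $3\nmid|K|$. The generating basic sets of $\mathcal A$ are the $K$-orbits inside the top layer $G\setminus G_{n-1}$, and under the multiplicative action each such orbit is a coset $cK$ of $K$ in the unit group. For a subgroup $G_i$ one has $G_i\leq\rad(cK)$ iff $cK+G_i=cK$ iff $K+G_i=K$ iff $1+G_i\subseteq K$; but $1+G_i$ is precisely the congruence subgroup $\{u\equiv 1\ (\mathrm{mod}\ 3^{\,n-i})\}$ of $\mathbb Z_{3^n}^{*}$, which is the unique subgroup of order $3^{i}$ of the Sylow $3$-subgroup $Q$ of $\aut(G)$. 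Therefore the common radical of the generating basic sets equals $G_{i^{*}}$ with $3^{i^{*}}=|K\cap Q|=|K|_3$, whence $\rad(\mathcal A)=e$ exactly when $|K|_3=1$, i.e. $3\nmid|K|$. The delicate points are to justify that $\rad(\mathcal A)$ is governed solely by these top (generating) orbits, which amounts to the generalized wreath decomposition relative to the maximal $\mathcal A$-subgroup $G_{n-1}$, and to carry out the coset/congruence-subgroup bookkeeping correctly.

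With these two facts the lemma is immediate. Since $\bar K$ is a homomorphic image of $K$, its order divides $|K|$, so $3\nmid|K|$ forces $3\nmid|\bar K|$. Applying the equivalence of the previous paragraph to the cyclotomic $S$-ring $\mathcal A_S=Cyc(\bar K,S)$ over the cyclic $3$-group $S$ gives $\rad(\mathcal A_S)=e$, that is $|\rad(\mathcal A_S)|=1$, as required. (When $j=i$ the section is trivial and the conclusion is vacuous.)
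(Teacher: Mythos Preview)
Your argument is correct. The paper itself gives no proof, merely citing \cite[Theorem~7.3]{EKP1}, whereas you supply a direct, self-contained argument exploiting the cyclicity of $\aut(\mathbb{Z}_{3^n})\cong\mathbb{Z}_2\times\mathbb{Z}_{3^{n-1}}$ and its congruence filtration. Your key equivalence---$\rad(Cyc(K,\mathbb{Z}_{3^n}))=e$ iff $3\nmid|K|$---is right, and the coset computation showing $G_i\leq\rad(cK)$ iff $1+G_i\subseteq K$ is sound (the set $1+G_i$ is indeed the unique subgroup of order $3^i$ of the cyclic unit group, and once it lies in $K$ one gets $k+G_i=k(1+G_i)\subseteq K$ for every $k\in K$). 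One minor point: by the paper's definition $\rad(\mathcal{A})$ is already the group generated by $\rad(X)$ over \emph{highest} basic sets only, so no generalized wreath decomposition is needed to justify that step---it is immediate from the definition. Compared to invoking the external theorem, your approach is more elementary and tailored to the $3$-group case; the cited result from \cite{EKP1} presumably handles general cyclic groups, but for the present purpose your computation is all that is required.
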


	\begin{proof}
	Follows from \cite[ Theorem 7.3]{EKP1}.
	\end{proof}

 Let $D=S\times C$, $S=\langle s\rangle,~|s|=3,~C=\langle c\rangle,~|c|=3^n,~n\geq 2$. Every subset $X$ of  $D$  can be  uniquely presented as the union $X=X_{0e}\cup sX_{1s}\cup s^2X_{2s^2}$, where $X_{0e},~X_{1s},~X_{2s^2} \subseteq C$. If $T\subseteq D$, we denote the set $\{t\in T: |t|\leq 3^m\}$  by $T_m$. Denote  one of two elements of order $3$ from $C$ by $c_1$, the cyclic group $\langle c_1 \rangle$ by $C_1$, the elementary abelian group $S\times C_1$ by $E$, the basic set of the $S$-ring $\mathcal{A}$ containing an element $x$ by $T_{x}$.

\begin{lemm} \label{oarg}
If a basic set $T$ of an $S$-ring $\mathcal{A}$ over  $D$ contains  elements $a,~b$ such that $|a|>|b|\geq 3$ then $a\{1,~c_1,~c_1^2\}\subseteq T$
\end{lemm}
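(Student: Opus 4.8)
The plan is to use the multiplier theorems (Theorem~\ref{burn} and Theorem~\ref{sch}) to extract structural information from the hypothesis $|a|>|b|\geq 3$, and then invoke Theorem~\ref{separat} (the separating-group result) to force the set $a\{1,c_1,c_1^2\}$ into $T$. The key observation is that $\{1,c_1,c_1^2\}=C_1$ is the unique subgroup of order $3$ inside the cyclic factor $C$, so the conclusion is really the assertion that $T$ is a union of cosets of $C_1$ (at least in the relevant part containing $a$). Since $\rad(T)$ is an $\cal A$-subgroup by Lemma~\ref{radbasic}, it suffices to show $C_1\leq\rad(T)$, or more locally that $aC_1\subseteq T$.

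First I would apply Theorem~\ref{sch} with $p=3$. The subgroup $H=\{g\in D:g^3=e\}$ is exactly $E=S\times C_1$, the elementary abelian group of order $9$ singled out in the preamble. The set $T^{[3]}=\{x^3:x\in T,\ |T\cap Ex|\not\equiv 0\bmod 3\}$ is then an $\cal A$-set. The point of having two elements $a,b\in T$ of different orders is that cubing collapses order: $a^3$ and $b^3$ have different orders as well (since $|a|>|b|\geq 3$ forces $|a^3|>|b^3|\geq 1$, and in particular $a^3\neq e$ while the orders stay distinct), so $T^{[3]}$ is a nontrivial proper $\cal A$-set whose elements have mixed orders. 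I expect to iterate this, or to combine it with rational conjugacy from Theorem~\ref{burn}, to pin down how $T$ meets the cosets of $E$. Lemma~\ref{intersection} applied to the $\cal A$-subgroup $E$ (or to $C_1$) controls the intersection numbers $|T\cap Eg|$ uniformly, which is what makes the $[3]$-operation behave predictably.

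The main engine, though, should be Theorem~\ref{separat}. The strategy is to produce a subgroup $H$ with $H\leq\rad(T\setminus H)$ and $T\cap H\neq\varnothing\neq T\setminus H$; the theorem then yields the rigid description $T=\langle T\rangle\setminus\rad(T)$ with $\rad(T)\leq H\cap\langle T\rangle$, and from $b\in T$ with $|b|\geq 3$ together with $|a|>|b|$ one reads off that $\rad(T)$ must contain $C_1$, giving $aC_1=a\{1,c_1,c_1^2\}\subseteq T$. The candidate separating group $H$ would naturally be built from the smaller-order elements, e.g. $H=\langle b\rangle$ or $H=E$; the presence of the two orders $|a|>|b|$ is precisely the asymmetry that lets one verify $H\leq\rad(T\setminus H)$ by a counting/homogeneity argument on the cosets.

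The hard part will be verifying the hypothesis $H\leq\rad(T\setminus H)$ of Theorem~\ref{separat}, i.e.\ that the ``large-order'' portion $T\setminus H$ of $T$ is genuinely invariant under right and left multiplication by the chosen $H$. This is where the interplay between the group structure $D=S\times C$ (the decomposition $T=T_{0e}\cup sT_{1s}\cup s^2T_{2s^2}$ from the preamble) and the multiplier/intersection constraints must be exploited: one has to show that the elements of $T$ of each order are distributed across $C_1$-cosets in a balanced way, so that multiplying the top-order elements by $c_1$ stays inside $T$. I anticipate a short case analysis according to which of $T_{0e},T_{1s},T_{2s^2}$ the elements $a$ and $b$ land in, using Lemma~\ref{aset} (products of basic sets, or of a singleton with a basic set, are $\cal A$-sets) to manufacture the auxiliary $\cal A$-sets needed to close the argument.
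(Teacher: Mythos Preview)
Your plan has a genuine gap: the route through Theorem~\ref{separat} cannot succeed here, and the detour through Theorem~\ref{sch} is unnecessary.

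First, the conclusion you are aiming for is only $aC_1\subseteq T$ for the particular element $a$ of non-minimal order, \emph{not} $C_1\leq\rad(T)$. These are different: if $T$ contains elements of orders $3$, $9$, and $27$, the lemma forces the order-$9$ and order-$27$ elements to come in full $C_1$-cosets, but says nothing about the order-$3$ elements, so $C_1\leq\rad(T)$ can fail. Consequently the conclusion $T=\langle T\rangle\setminus\rad(T)$ of Theorem~\ref{separat} is generically false for the $T$ in question, and any argument that purports to establish it must break down. Concretely, your candidate separating groups do not satisfy the hypotheses: with $H=E$ you need $T\cap E\neq\varnothing$, but the lemma only assumes $|b|\geq 3$, so $T$ may miss $E$ entirely; with $H=\langle b\rangle$ there is no reason for $\langle b\rangle\leq\rad(T\setminus\langle b\rangle)$, and your sketch of a ``counting/homogeneity argument on the cosets'' does not supply one.

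The actual proof is a two-line application of Theorem~\ref{burn} alone, with no need for Theorem~\ref{sch}, Theorem~\ref{separat}, or any case analysis. Put $m=1+|a|/3$ and $l=1+2|a|/3$; both are coprime to $3$. Since $|b|<|a|$ are powers of $3$, $|b|$ divides $|a|/3$, so $b^m=b^l=b$. By Theorem~\ref{burn} the sets $T^{(m)}$ and $T^{(l)}$ are basic, and they contain $b$, hence $T^{(m)}=T^{(l)}=T$. Therefore $a^m=a\cdot a^{|a|/3}$ and $a^l=a\cdot a^{2|a|/3}$ lie in $T$; but $a^{|a|/3}$ and $a^{2|a|/3}$ are the two nontrivial elements of order $3$ in $\langle a\rangle$, and since $|a|\geq 9$ these project trivially to $S$ and hence equal $c_1,c_1^2$. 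Thus $\{a,ac_1,ac_1^2\}\subseteq T$. The whole point is to pick a multiplier that fixes $b$ but moves $a$ within its $C_1$-coset; once you see this, there is nothing more to do.
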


\begin{proof}
Set  $m=1+\frac{|a|}{3},~l=1+\frac{2|a|}{3}$. Then  $b^m=b^l=b$. By Theorem \ref{burn}, the sets $T^{(m)},~T^{(l)}$ are basic and $T^{(m)}=T^{(l)}=T$. So $\{a^m,~a^l\}=\{aa^{\frac{|a|}{3}},~aa^{\frac{2|a|}{3}}\}=\{ac_1,~ac_1^2\}\subseteq T$. 
\end{proof}

Further we formulate and prove two lemmas that describe regular sets with trivial radical over $C=\mathbb{Z}_{3^n}$ and $D=\mathbb{Z}_3\times \mathbb{Z}_{3^n}$.

\begin{lemm} \label{cycleorbit}
Let $X\in Orb(M,C)$ where $M\leq \aut(C)$. Then $\rad(X)=e$ if and only if $M=e$ or $M=\{e,~\delta\}$, where $\delta:x\rightarrow x^{-1}$. This means that  there exists  $x \in C$ such that $X=\{x\}$ or $X=\{x,~x^{-1}\}$.
\end{lemm}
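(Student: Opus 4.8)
The plan is to reduce everything to the size of the orbit $X$ and to exploit that the cyclic $3$-group $C$ has a unique subgroup of order $3$, namely $C_1=\langle c_1\rangle$. Since $\rad(X)$ is a subgroup of $C$ and every nontrivial subgroup of $C$ contains $C_1$, one has $\rad(X)\neq e$ if and only if $C_1\leq\rad(X)$, i.e. if and only if $Xc_1=X$; equivalently, $X$ is a union of cosets of $C_1$. Thus the statement will follow once I prove the dichotomy $\rad(X)=e\iff |X|\leq 2$ together with a description of the orbits of size at most $2$. For the easy direction I would note that $\aut(C)\cong(\mathbb{Z}/3^n\mathbb{Z})^{\ast}$ is cyclic of order $2\cdot 3^{n-1}$ and write its elements as $\sigma_a\colon x\mapsto x^a$. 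If $M=e$ then $X=\{x\}$ and $\rad(X)=e$. If $M=\{e,\delta\}$ then $X=\{x,x^{-1}\}$ (a single point only when $x=e$, since $C$ has odd order); here $s\in\rad(X)$ forces $s=e$, or else $xs=x^{-1}$ and $x^{-1}s=x$, whence $x^4=e$ and so $x=e$, giving again $\rad(X)=e$.

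The converse is the heart of the matter. Suppose $3\mid |X|$; I will show $\rad(X)\neq e$. Writing $x=c^{t}$ with $t=3^{j}u$ and $u$ coprime to $3$, the stabilizer of $x$ in $M$ consists of the $\sigma_a$ with $a\equiv 1\pmod{3^{\,n-j}}$, so $|X|=|M:M_x|$ equals the order of the image of $M$ in $(\mathbb{Z}/3^{\,n-j}\mathbb{Z})^{\ast}$. Since this unit group has cyclic Sylow $3$-subgroup, the hypothesis $3\mid|X|$ produces $a\in M$ whose image has order $3$; such an $a$ satisfies $a\equiv 1\pmod{3^{\,n-j-1}}$ but $a\not\equiv 1\pmod{3^{\,n-j}}$. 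A short computation then shows that for every $y\in X$ one has $y^{\sigma_a}=y\,c_1^{\varepsilon}$ with $\varepsilon\not\equiv 0\pmod 3$, so $\sigma_a$ moves $y$ to another point of the coset $yC_1$. As $\sigma_a^{3}$ fixes each such $y$, the orbit $\{y,\,y^{\sigma_a},\,y^{\sigma_a^{2}}\}$ fills $yC_1$, and all three points lie in $X$; hence $yC_1\subseteq X$. Therefore $X$ is a union of $C_1$-cosets and $\rad(X)\neq e$.

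Taking contrapositives, $\rad(X)=e$ forces $3\nmid|X|$; as $|X|$ divides $|\aut(C)|=2\cdot 3^{n-1}$, this leaves $|X|\in\{1,2\}$. If $|X|=1$ then $X=\{x\}$. If $|X|=2$, I would pick $\sigma_a\in M\setminus M_x$, so that $x^{a^{2}}=x$ while $x^{a}\neq x$; thus $(a-1)(a+1)t\equiv 0$ and $(a-1)t\not\equiv 0\pmod{3^{n}}$. Because $a$ is a unit, $a-1$ and $a+1$ cannot both be divisible by $3$, and the two congruences force $3\nmid(a-1)$; hence $(a+1)t\equiv 0\pmod{3^{n}}$, i.e. $x^{a}=x^{-1}$ and $X=\{x,x^{-1}\}$. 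This yields exactly the two possibilities claimed, matching the orbits produced by $M=e$ and $M=\{e,\delta\}$.

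I expect the main obstacle to be the middle step: extracting the order-$3$ automorphism $\sigma_a$ from the hypothesis $3\mid|X|$ and verifying that it shifts \emph{every} point of $X$ within its $C_1$-coset. This is where the arithmetic of $(\mathbb{Z}/3^n\mathbb{Z})^{\ast}$ genuinely enters, through the cyclicity of its Sylow $3$-subgroup and the $3$-adic bookkeeping in the exponent $(a-1)t$; once that claim is in hand, the size-$2$ classification and the easy direction are routine.
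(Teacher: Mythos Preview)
Your argument is correct and proves what the paper actually uses, namely that $\rad(X)=e$ forces $X=\{x\}$ or $X=\{x,x^{-1}\}$. The paper itself does not give a proof at all: it simply writes ``Follows from \cite[Lemma~5.1]{EKP3}'', deferring to an external reference. So your route is genuinely different in that it is self-contained. The key idea you supply --- that $3\mid|X|$ forces an element $\sigma_a\in M$ whose image in $(\mathbb{Z}/3^{\,n-j}\mathbb{Z})^{*}$ has order $3$, hence $a\equiv 1\pmod{3^{\,n-j-1}}$, hence $y^{a}\in yC_1\setminus\{y\}$ for every $y\in X$ --- is exactly the arithmetic content hidden behind the citation, and your lifting-the-exponent bookkeeping is accurate (note that $3\mid|X|$ rules out $|x|\le 3$, so $n-j-1\ge 1$ and in particular $a\equiv 1\pmod 3$, which you need when you iterate $\sigma_a$ on $yC_1$). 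One remark: the lemma as stated in the paper also asserts a conclusion about $M$ itself, not just about $X$; strictly speaking that conclusion requires the orbit to be faithful (e.g.\ to contain a generator of $C$), and you prudently prove only the statement about $X$, which is all that is ever invoked later. What your direct approach buys is independence from \cite{EKP3}; what the paper's one-line citation buys is brevity.
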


\begin{proof}
Follows from  \cite[Lemma 5.1]{EKP3}.
\end{proof}

\begin{lemm}\label{lowset}
Let $T$ be a basic set of an $S$-ring $\mathcal{A}$ over $D$ and $3^m=min \{|t|:t\in T\}$. Suppose that $\rad(T)=e$. Then there exist subsets $X,Y,Y_1,Z,Z_1$ of $C$ such that $T_m=X\cup sY\cup s^2Y_1\cup sZ\cup s^2Z_1,~Y\cap Y_1=Z\cap Z_1=\varnothing$, and  one of the following statements holds:

\begin{enumerate}
\item every nonempty set $U\in\{X,Y\cup Y_1,Z\cup Z_1\}$ is singleton;

\item every nonempty set $U\in\{X,Y\cup Y_1,Z\cup Z_1\}$ is of the form $\{x,x^{-1}\},~x\in C$.

\end{enumerate}

\end{lemm}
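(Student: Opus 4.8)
The plan is to read off the shape of $T_m$ from the action of the multiplier group and then feed the resulting orbits into Lemma \ref{cycleorbit}. First I would fix the fiber decomposition $T_m = A_0 \cup sA_1 \cup s^2A_2$ with $A_0,A_1,A_2\subseteq C$; since $3^m$ is the minimal order, every element of $T_m$ has order exactly $3^m$, so for $m\ge 2$ each $A_j$ consists of order-$3^m$ elements of $C$ (the boundary case $m=1$, where $A_1,A_2$ may contain $e$ and $C_{m-1}$ is trivial, is treated on its own). The order-$3^m$ elements of $C$ split into the two cosets $O=yC_{m-1}$ and $O^{-1}=y^{-1}C_{m-1}$ of the subgroup $C_{m-1}\le C$ of order $3^{m-1}$; these are precisely the two orbits of the index-two subgroup $\{r\equiv 1\bmod 3\}$ of $\aut(C)$ on that level, and $\delta\colon x\mapsto x^{-1}$ interchanges them. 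I then set $X=A_0$, $Y=A_1\cap O$, $Z=A_1\cap O^{-1}$, $Y_1=A_2\cap O^{-1}$, $Z_1=A_2\cap O$, so that $Y\cap Y_1=Z\cap Z_1=\varnothing$ is automatic.

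The engine is the multiplier stabilizer $M=\{r\in\aut(C):T^{(r)}=T\}$ together with the gluing principle: if $t,t'\in T$ and $t'=t^{(r)}$ for some $r$ coprime to $3$, then $T^{(r)}$ is basic by Theorem \ref{burn} and shares $t'$ with $T$, whence $T^{(r)}=T$ and $r\in M$. Since $e^{(r)}=e$ for every $r$, while an $r\equiv 1\pmod 3$ fixes each $S$-fiber and an $r\equiv 2\pmod 3$ swaps $s\leftrightarrow s^2$, this principle shows that $A_0$ is a single $M$-orbit and that each of $Y,Z,Y_1,Z_1$ is a single orbit of $M_1:=M\cap\{r\equiv 1\bmod 3\}$ on $C$. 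Moreover any $r\equiv 1\pmod{3^m}$ fixes $T_m$ pointwise and so lies in $M$; hence $M\supseteq\{r\equiv 1\bmod 3^m\}$, and the whole question reduces to the subgroup $M|_{C_m}\le(\mathbb{Z}/3^m)^{*}$.

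To pin down $M|_{C_m}$ I would use $\rad(T)=e$. By Lemma \ref{oarg}, every $a\in T$ of order $>3^m$ satisfies $ac_1,ac_1^2\in T$, so $T\setminus T_m$ is a union of $C_1$-cosets and $c_1\in\rad(T\setminus T_m)$. If $M_1$ acted nontrivially on $C_m$, then each nonempty piece among $A_0,Y,Z,Y_1,Z_1$ would be $c_1$-invariant (a union of cosets of some $C_i$ with $i\ge 1$), forcing $T_m$ to be $c_1$-invariant, hence $c_1\in\rad(T)=e$, a contradiction (if $T=T_m$ the contradiction is immediate). Therefore $M_1$ fixes $C_m$ pointwise, so $Y,Z,Y_1,Z_1$ are singletons. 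As $M_1$ has index at most $2$ in $M$ and $(\mathbb{Z}/3^m)^{*}$ is cyclic with unique involution $-1$, this gives $M|_{C_m}\in\{\{1\},\{\pm 1\}\}$. In the first case every nonempty piece is a singleton, which is conclusion (1); in the second case the multiplier realizing $-1$ sends $sy\mapsto s^2y^{-1}=(sy)^{-1}$ and $x\mapsto x^{-1}$, gluing $Y$ to $Y_1$, $Z$ to $Z_1$ and acting on $A_0$, so each nonempty $X$, $Y\cup Y_1$, $Z\cup Z_1$ is an inverse-closed pair, which is conclusion (2); Lemma \ref{cycleorbit} certifies these orbit shapes.

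The main obstacle I expect is not the orbit analysis but the cross-fiber bookkeeping that guarantees $T_m$ is exhausted by the three families $X$, $Y\cup Y_1$, $Z\cup Z_1$. A priori the fibers $s,s^2$ could carry several mutually $M$-unrelated singletons and thereby spill over more than two families, which would be incompatible with the stated conclusion. Ruling this out is exactly where $\rad(T)=e$ must be exploited in full, in tandem with Lemma \ref{oarg}; in particular the degenerate configurations where $T$ fills out a punctured subgroup $\langle T\rangle\setminus\rad(T)$ must be split off by Theorem \ref{separat} and dispatched separately, and the bookkeeping of the case $m=1$ (identity coordinates in $sA_1,s^2A_2$, trivial $C_{m-1}$) must be checked by hand.
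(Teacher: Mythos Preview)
Your approach is essentially the paper's: both arguments introduce the multiplier stabilizer $M=K_{\{T\}}$, use Theorem~\ref{burn} (your ``gluing principle'') to see that the $M$-orbits on $T_m$ are exactly the intersections of $T$ with $K$-orbits, and then exploit $\rad(T)=e$ together with Lemma~\ref{oarg} to force $M|_{C_m}\in\{\{1\},\{\pm1\}\}$ via Lemma~\ref{cycleorbit}. The paper phrases the last step as ``at least one of $X,\,Y\cup Y_1,\,Z\cup Z_1$ has trivial radical, hence $M=e$ or $\{e,\delta\}$''; your version via $M_1$ is the same computation unpacked.

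Your final paragraph is overcautious to the point of being misleading. The ``cross-fiber bookkeeping'' you fear is already done: for $m\ge 2$ the order-$3^m$ elements of $C$ are \emph{exactly} $O\cup O^{-1}$, so $A_1=(A_1\cap O)\cup(A_1\cap O^{-1})=Y\cup Z$ is a partition into at most two pieces, and you proved each piece is a single $M_1$-orbit. There is no room for ``several mutually $M$-unrelated singletons'', and Theorem~\ref{separat} plays no role here; the paper does not invoke it either. The only genuinely separate case is $m=1$, where $A_1$ may contain $e$ and the $O/O^{-1}$ dichotomy breaks down; you correctly flag this, and the paper's ``it is easy to check that $sT_{1s,m}\cup s^2T_{2s^2,m}$ is a union of at most two orbits'' is equally terse there. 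In practice every application of Lemma~\ref{lowset} in the paper has $m\ge 2$, so the $m=1$ bookkeeping is not load-bearing.
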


\begin{proof}
Let us present $T_m$  as a union $T_m=T_{0e,m}\cup sT_{1s,m}\cup s^2T_{2s^2,m}$, where $T_{0e,m},~T_{1s,m},~T_{2s^2,m}\subseteq C$. Denote by $K$ the group $\{\sigma_m:m\in \mathbb{Z}^{*}_{3^n}\},~\sigma_m: x\rightarrow x^m,$ and by $M$ the setwise stabilizer $K_{\{T\}}$ of $T$ in $K$. If $a\in T_{0e,m}$ and $\alpha \in M$ then $a\alpha \in T_{0e,m}$. Thus $T_{0e,m}$ is a union of some orbits of $M$. Since all elements in $T_{0e,m}$ have the same order, $T_{0e,m}$ is an orbit of $M$. If $a\in sT_{1s,m} \cup s^2T_{2s^2,m}$ and $\alpha \in M$ then $a\alpha \in sT_{1s,m} \cup s^2T_{2s^2,m}$. Thus $sT_{1s,m} \cup s^2T_{2s^2,m}$ is a union of some orbits of $M$. Moreover, it is easy to check that $sT_{1s,m} \cup s^2T_{2s^2,m}$ is a union of at most two orbits  of $M$. Therefore $T_m$ can be presented as a union $T_m=X\cup sY\cup s^2Y_1\cup sZ\cup s^2Z_1$, where each set $X,~Y\cup Y_1,~Z\cup Z_1$ is empty or  equal to an orbit of $M$. At least one of the sets $X,~Y\cup Y_1,~Z\cup Z_1$ has trivial radical because otherwise $c_1\in \rad(T)$ by Lemma \ref{oarg}. Without loss of generality we assume that $X\neq \varnothing$ and $\rad(X)=e$. Then from Lemma \ref{cycleorbit} it follows that $M=e$ or $M=\{e,~\delta\}$ where $\delta:x\rightarrow x^{-1}$. Now the claim follows. 
\end{proof}
	
Now we describe two constructions producing $S$-rings over the group $G=G_1\times G_2$ from $S$-rings $\mathcal{A}_1$ and $\mathcal{A}_2$ over $G_1$ and $G_2$ respectively.
	
	\begin{defn} 
	Let $\mathcal{A}_1$ be an $S$-ring over $G_1$ and $\mathcal{A}_2$ be an $S$-ring over $G_2$. Then the set
	
	$$\mathcal{S}=\mathcal{S}(\mathcal{A}_1)\times \mathcal{S}(\mathcal{A}_2)=\{X_1\times X_2:~X_1\in \mathcal{S}(\mathcal{A}_1),~X_2\in \mathcal{S}(\mathcal{A}_2)\} $$
forms a partition of the group $G=G_1\times G_2$ that defines an $S$-ring  over $G$. This $S$-ring is called \emph{the tensor product} of the $S$-rings $\mathcal{A}_1$ and $\mathcal{A}_2$ and 	denoted by $\mathcal{A}_1 \otimes \mathcal{A}_2$.
	\end{defn}

	\begin{defn} 
	Let $\mathcal{A}_1$ be an $S$-ring over group $G_1$ with the identity element $e_1$ and $\mathcal{A}_2$ be an $S$-ring over group $G_2$ with the identity element $e_2$. Then the set $\mathcal{S}=\mathcal{S}_1 \cup \mathcal{S}_2$ where
	
	$$\mathcal{S}_1=\{X_1\times \{e_2\}:~X_1\in \mathcal{S}(\mathcal{A}_1)\},~\mathcal{S}_2=\{G_1\times \{X_2\}:~X_2\in \mathcal{S}(\mathcal{A}_2)\setminus \{e_2\}\} $$
forms a partition of the group $G=G_1\times G_2$ that defines an $S$-ring  over $G$. This $S$-ring is called \emph{the wreath product} of the $S$-rings $\mathcal{A}_1$ and $\mathcal{A}_2$ and 	denoted by $\mathcal{A}_1 \wr \mathcal{A}_2$.
	\end{defn}

	\begin{lemm} \label{schurtens}
	The $S$-ring $\mathcal{A}_1 \otimes \mathcal{A}_2$ is schurian if and only if so are the  $S$-rings $\mathcal{A}_1$ and $\mathcal{A}_2$.
	\end{lemm}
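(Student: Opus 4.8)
The plan is to prove the two implications separately, relying on the criterion recorded earlier that an $S$-ring $\mathcal{B}$ over a group $H$ is schurian precisely when $\mathcal{S}(\mathcal{B}) = Orb(\aut(\mathcal{B})_{e_H}, H)$, equivalently when $\mathcal{B} = \mathcal{A}(\Gamma, H)$ for some $\Gamma$ with $H_{right} \leq \Gamma \leq \sym(H)$. Throughout write $\mathcal{A} = \mathcal{A}_1 \otimes \mathcal{A}_2$ over $G = G_1\times G_2$.

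For the forward direction, assume $\mathcal{A}$ is schurian. The key observation is that $G_1\times\{e_2\}$ and $\{e_1\}\times G_2$ are $\mathcal{A}$-subgroups of $G$. Indeed, since $\{e_2\}\in\mathcal{S}(\mathcal{A}_2)$, every set $X_1\times\{e_2\}$ with $X_1\in\mathcal{S}(\mathcal{A}_1)$ is a basic set of $\mathcal{A}$, so $\underline{G_1\times\{e_2\}}$ is a $\mathbb{Z}$-sum of basic-set elements and hence lies in $\mathcal{A}$; the symmetric argument handles $\{e_1\}\times G_2$. I would then invoke Lemma \ref{restrict}: because $\mathcal{A}$ is schurian, the restriction $\mathcal{A}_{G_1\times\{e_2\}}$ is schurian. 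Finally I would identify this restriction with $\mathcal{A}_1$ under the natural group isomorphism $G_1\cong G_1\times\{e_2\}$, $g_1\mapsto(g_1,e_2)$: the basic sets of the restriction are exactly the $X_1\times\{e_2\}$, $X_1\in\mathcal{S}(\mathcal{A}_1)$, which pull back to $\mathcal{S}(\mathcal{A}_1)$. Since schurity is invariant under such a Cayley isomorphism, $\mathcal{A}_1$ is schurian, and the same argument applied to $\{e_1\}\times G_2$ shows $\mathcal{A}_2$ is schurian.

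For the converse, assume $\mathcal{A}_1$ and $\mathcal{A}_2$ are schurian and set $\Gamma_i=\aut(\mathcal{A}_i)\leq\sym(G_i)$. I would take $\Gamma=\Gamma_1\times\Gamma_2$ acting coordinatewise on $G_1\times G_2$ and verify that $\mathcal{A}(\Gamma, G) = \mathcal{A}_1\otimes\mathcal{A}_2$. Since right translation by $(g_1,g_2)$ is the pair of right translations, $G_{right}=(G_1)_{right}\times(G_2)_{right}\leq\Gamma$, so $\mathcal{A}(\Gamma, G)$ is a well-defined $S$-ring. Because the two factors act independently on the two coordinates, the stabilizer splits as $\Gamma_e=(\Gamma_1)_{e_1}\times(\Gamma_2)_{e_2}$, and its orbit through $(g_1,g_2)$ is the product of the $(\Gamma_1)_{e_1}$-orbit of $g_1$ with the $(\Gamma_2)_{e_2}$-orbit of $g_2$. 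Using schurity of $\mathcal{A}_1$ and $\mathcal{A}_2$, these factor orbit sets are $\mathcal{S}(\mathcal{A}_1)$ and $\mathcal{S}(\mathcal{A}_2)$, whence $Orb(\Gamma_e, G)=\{X_1\times X_2:X_1\in\mathcal{S}(\mathcal{A}_1),\,X_2\in\mathcal{S}(\mathcal{A}_2)\}=\mathcal{S}(\mathcal{A}_1\otimes\mathcal{A}_2)$. Therefore $\mathcal{A}_1\otimes\mathcal{A}_2=\mathcal{A}(\Gamma, G)$ is schurian.

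Neither direction requires heavy machinery. The step I expect to demand the most care — the main (if minor) obstacle — is the orbit computation in the converse: one must justify that the $e$-stabilizer of $\Gamma_1\times\Gamma_2$ produces no orbits larger than the products $X_1\times X_2$. This rests on the fact that the two groups act on disjoint coordinates, so the stabilizer genuinely splits as a direct product and its orbits factor accordingly; everything else is bookkeeping with Lemma \ref{restrict} and the definition of the tensor product.
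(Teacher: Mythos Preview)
Your proof is correct. Note, however, that the paper does not actually prove this lemma: it is stated in the preliminaries section as a known fact taken from \cite{MP3}, with no proof given. Your argument --- using Lemma~\ref{restrict} for the forward direction and the direct product $\aut(\mathcal{A}_1)\times\aut(\mathcal{A}_2)$ for the converse --- is the standard one and would serve perfectly well as the omitted justification.
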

	
	\begin{lemm} \label{schurwr}
	The $S$-ring $\mathcal{A}_1 \wr \mathcal{A}_2$ is schurian if and only if so are the  $S$-rings $\mathcal{A}_1$ and $\mathcal{A}_2$.
	\end{lemm}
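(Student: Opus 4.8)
The plan is to prove the two implications separately. Throughout I write $\mathcal{A}=\mathcal{A}_1\wr\mathcal{A}_2$, $G=G_1\times G_2$, and $L=G_1\times\{e_2\}$, and I use the schurity criterion recorded above: $\mathcal{A}$ is schurian if and only if $\mathcal{S}(\mathcal{A})=Orb(\aut(\mathcal{A})_e,G)$. The forward implication should be essentially immediate. First I would observe that $L$ is an $\mathcal{A}$-subgroup, since the basic sets of $\mathcal{A}$ lying inside $L$ are exactly the sets $X_1\times\{e_2\}$ with $X_1\in\mathcal{S}(\mathcal{A}_1)$, whose union is $L$. The restriction $\mathcal{A}_L$ then has these as its basic sets and is carried isomorphically onto $\mathcal{A}_1$ by $(x_1,e_2)\mapsto x_1$; likewise $F=G/L$ is an $\mathcal{A}$-section and $\mathcal{A}_F\cong\mathcal{A}_2$, because under the quotient map every $X_1\times\{e_2\}$ collapses to the identity while each $G_1\times X_2$ projects onto $X_2$. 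Hence if $\mathcal{A}$ is schurian, Lemma \ref{restrict} and Lemma \ref{ssection} force $\mathcal{A}_1$ and $\mathcal{A}_2$ to be schurian.

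For the converse I would assume $\mathcal{A}_1,\mathcal{A}_2$ schurian, set $\Gamma_i=\aut(\mathcal{A}_i)$ so that $Orb((\Gamma_i)_{e_i},G_i)=\mathcal{S}(\mathcal{A}_i)$, and build enough automorphisms of $\mathcal{A}$ by hand. The preparatory step is to write out the basic relations: the relation of $X_1\times\{e_2\}$ joins $(a_1,a_2)$ to $(b_1,b_2)$ exactly when $a_2=b_2$ and $b_1a_1^{-1}\in X_1$ (so it lives inside single fibres $G_1\times\{g_2\}$), whereas the relation of $G_1\times X_2$ joins them exactly when $b_2a_2^{-1}\in X_2$, with no condition on the first coordinates. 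Guided by this, I would introduce two families of permutations of $G$: for $g_2\in G_2$ and $\gamma_1\in\Gamma_1$ the fibre map acting as $\gamma_1$ on $G_1\times\{g_2\}$ and trivially elsewhere, and for $\gamma_2\in\Gamma_2$ the map $\psi_{\gamma_2}\colon(a_1,a_2)\mapsto(a_1,a_2^{\gamma_2})$.

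Next I would check these lie in $\aut(\mathcal{A})$. A fibre map preserves the first type of relation because $\gamma_1\in\aut(\mathcal{A}_1)$ acts within a fibre, and the second type because it fixes every second coordinate; the map $\psi_{\gamma_2}$ preserves both types because $\gamma_2\in\aut(\mathcal{A}_2)$. It then remains to see that the $\aut(\mathcal{A})_e$-orbits are the full basic sets. Since these orbits always refine $\mathcal{S}(\mathcal{A})$, it suffices to act transitively on each basic set by automorphisms fixing $e$. For $X_1\times\{e_2\}$ I would use fibre maps supported on the fibre $G_1\times\{e_2\}$ with $\gamma_1$ ranging over $(\Gamma_1)_{e_1}$; these fix $e$ and their orbit of $(x_1,e_2)$ is $X_1\times\{e_2\}$ because $X_1\in Orb((\Gamma_1)_{e_1},G_1)$. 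For $G_1\times X_2$ with $X_2\neq\{e_2\}$ I would first reach every $(e_1,x_2')$, $x_2'\in X_2$, using $\psi_{\gamma_2}$ with $\gamma_2\in(\Gamma_2)_{e_2}$, and then spread along the fibre $G_1\times\{x_2'\}$ to any $(b_1,x_2')$ by a fibre map supported on that fibre; since $x_2'\neq e_2$ this fibre map automatically fixes $e$, so any $\gamma_1\in\Gamma_1$ is allowed and transitivity of $\Gamma_1$ on $G_1$ completes the orbit. Combining the two cases yields $Orb(\aut(\mathcal{A})_e,G)=\mathcal{S}(\mathcal{A})$, as required.

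I expect the main obstacle to be exactly the verification that the fibre maps and the maps $\psi_{\gamma_2}$ preserve all basic relations, together with keeping careful track of which of these maps are allowed to fix $e$; the forward implication and the final orbit count are then routine bookkeeping.
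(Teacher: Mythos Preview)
Your proof is correct. Note, however, that the paper does not actually prove this lemma: it is listed in the preliminaries as a known fact taken from \cite{MP3}, with no argument given. So there is no ``paper's own proof'' to compare against.

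That said, your argument is the standard one and would be accepted without difficulty. The forward direction via Lemmas~\ref{restrict} and~\ref{ssection} is exactly right. For the converse, the group you are building is the permutational wreath product $\Gamma_1\wr_{G_2}\Gamma_2$ in its imprimitive action, and your verification that the fibre maps and the maps $\psi_{\gamma_2}$ preserve both types of basic relations is clean. The only place worth a word of extra care is the transitivity step on $G_1\times X_2$: you implicitly use that $\Gamma_1=\aut(\mathcal{A}_1)$ contains $(G_1)_{right}$ and hence is transitive on $G_1$, which is true but should be stated, and that $x_2'\neq e_2$ so the fibre map supported on $G_1\times\{x_2'\}$ fixes $e$. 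You do mention the second point; make the first explicit and the proof is complete.
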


	\begin{defn}
	Let $\mathcal{A}$ be an $S$-ring over $G$ and $L,~U$ be $\mathcal{A}$-subgroups such that $e \leq L \leq U \leq G,~L \unlhd G$. The $S$-ring $\mathcal{A}$ is called \emph{the generalized wreath product} or \emph{$U/L$-wreath product} of $\mathcal{A}_U$ and $\mathcal{A}_{G/L}$ if every basic set $S\in \mathcal{S}(\mathcal{A}) $ outside  $U$ is a union of $L$-cosets. The product is called \emph{nontrivial} or \emph{proper} if $L\neq 1,~ U\neq G$. 
\end{defn}

\begin{remark}
When $U=L$ the generalized wreath product coincides with a wreath product.
\end{remark}

\begin{defn}
Permutation groups $\Gamma,~\Gamma_1\leq \sym(V)$   are called  \emph{$2$-equivalent} if  $Orb(\Gamma,V^2)=Orb(\Gamma_1,V^2)$.

\end{defn}

\begin{defn}
A permutation group  $\Gamma\leq \sym(V)$ is called \emph{$2$-isolated} if  it is the only group which is $2$-equivalent to $\Gamma$.

\end{defn}

\begin{theo} \label{genwr}
Let $\mathcal{A}$ be an $S$-ring over an abelian group $G$. Suppose that $\mathcal{A}$ is an $U/L$-wreath product and $S$-rings $\mathcal{A}_U$ and $\mathcal{A}_{G/L}$ are schurian. Then $\mathcal{A}$ is schurian if and only if there exist groups $\Delta_0\geq (G/L)_{right}$ and $\Delta_1\geq U_{right}$ such that $\Delta_0$ is $2$-equivalent to $\aut(\mathcal{A}_{G/L})$, $\Delta_1$ is $2$-equivalent to $\aut(\mathcal{A}_U)$ and $(\Delta_0)^{U/L}=(\Delta_1)^{U/L}$.

\end{theo}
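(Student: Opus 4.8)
The plan is to prove the two implications separately, using throughout that $\mathcal{A}$ is schurian precisely when $\mathcal{S}(\mathcal{A})=\orb(\aut(\mathcal{A})_e,G)$, and more generally that an overgroup $\Gamma\geq G_{right}$ satisfies $\mathcal{A}(\Gamma,G)=\mathcal{A}$ if and only if the $2$-orbits of $\Gamma$ are the basic relations of $\mathcal{A}$, i.e.\ $\Gamma$ is $2$-equivalent to $\aut(\mathcal{A})$. Two block structures are available from the outset: since $L\unlhd G$ is an $\mathcal{A}$-subgroup, the right cosets of $L$ form a system of blocks for $\aut(\mathcal{A})$, and since $U$ is an $\mathcal{A}$-subgroup the right cosets of $U$ do as well. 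Consequently $\aut(\mathcal{A})$ acts on $G/L$ and preserves the sub-block $U/L$, which is exactly what lets the section $U/L$ play the role of the common interface of the two factors.

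For the necessity, assume $\mathcal{A}$ is schurian and put $\Gamma=\aut(\mathcal{A})$. Let $\Delta_0\leq\sym(G/L)$ be the group induced by $\Gamma$ on the block system of $L$-cosets, and let $\Delta_1\leq\sym(U)$ be the group induced on $U$ by the setwise stabilizer of $U$ in $\Gamma$. Since $G_{right}$ descends to $(G/L)_{right}$ on the quotient and acts as $U_{right}$ on the block $U$, we have $\Delta_0\geq(G/L)_{right}$ and $\Delta_1\geq U_{right}$. Using Lemma \ref{ssection} and Lemma \ref{restrict} (quotients and restrictions of schurian $S$-rings are schurian) one checks that the $2$-orbits of $\Delta_0$ are exactly the basic relations of $\mathcal{A}_{G/L}$ and those of $\Delta_1$ the basic relations of $\mathcal{A}_U$; that is, $\Delta_0$ is $2$-equivalent to $\aut(\mathcal{A}_{G/L})$ and $\Delta_1$ to $\aut(\mathcal{A}_U)$. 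Finally, both $(\Delta_0)^{U/L}$ and $(\Delta_1)^{U/L}$ coincide with the single permutation group induced by $\Gamma$ on the section $U/L$, so the compatibility condition holds automatically.

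For the sufficiency, given $\Delta_0$ and $\Delta_1$ as in the statement I would build the candidate group $\Gamma\leq\sym(G)$ as the generalized wreath product of $\Delta_1$ and $\Delta_0$ along $U/L$: let $\Gamma$ consist of the permutations of $G$ that induce an element of $\Delta_0$ on $G/L$ and that act on each right $U$-coset, after the translation identifying it with $U$, as an element of $\Delta_1$, subject to the requirement that the $U/L$-action read off from the local $\Delta_1$-part agree with the one read off from the global $\Delta_0$-part. The equality $(\Delta_0)^{U/L}=(\Delta_1)^{U/L}$ is precisely the hypothesis that makes this prescription consistent across all $U$-cosets and makes $\Gamma$ closed under composition, hence a group containing $G_{right}$. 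It then remains to verify $\orb(\Gamma_e,G)=\mathcal{S}(\mathcal{A})$: for a pair whose difference lies in $U$ this reduces, via the action of $\Delta_1$ and its $2$-equivalence to $\aut(\mathcal{A}_U)$, to the basic sets of $\mathcal{A}$ contained in $U$; for a pair whose difference lies outside $U$ one uses that the corresponding basic set of $\mathcal{A}$ is a union of $L$-cosets (the defining property of the $U/L$-wreath product) and is therefore controlled entirely by the action of $\Delta_0$ on $G/L$, whose $2$-orbits are the basic relations of $\mathcal{A}_{G/L}$.

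The main obstacle is this sufficiency direction, and within it the verification that the glued set $\Gamma$ is genuinely a group whose $2$-orbits match $\mathcal{S}(\mathcal{A})$ on the mixed pairs that straddle the interface $U/L$. This is where the compatibility condition does all the work: one must show that the local datum on the $U$-cosets supplied by $\Delta_1$ and the global datum on $G/L$ supplied by $\Delta_0$ amalgamate without producing new $2$-orbits that would split a basic set of $\mathcal{A}$ or merge two of them. Establishing this amalgamation, essentially that a $U/L$-wreath product scheme is reconstructible from its two factor schemes together with a matched pair of $2$-equivalent overgroups on the common section, is the technical heart of the argument.
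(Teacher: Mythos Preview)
The paper does not prove Theorem~\ref{genwr}; it is stated in the preliminaries section as a known result taken from~\cite{MP3} (see the sentence introducing Section~2). There is therefore no proof in this paper to compare your proposal against.

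That said, your outline matches the standard argument from~\cite{MP3} (and ultimately from the Evdokimov--Ponomarenko theory of generalized wreath products of permutation groups and schemes). The necessity direction is essentially as you describe. For the sufficiency direction your description is correct in spirit, but you should be aware that the actual construction of $\Gamma$ is more delicate than your sketch suggests: one does not simply take all permutations with compatible local and global actions, since that set need not be closed under composition nor transitive enough. The construction in~\cite{MP3} builds $\Gamma$ as a concrete generalized wreath product of $\Delta_1$ and $\Delta_0$ over the common quotient $(\Delta_0)^{U/L}=(\Delta_1)^{U/L}$, realized via a choice of coset representatives and an explicit cocycle-type description; verifying that the resulting $\Gamma$ has the correct $2$-orbits then uses the structure of the generalized wreath product of the underlying Cayley schemes. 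Your identification of this amalgamation step as ``the technical heart'' is accurate, but a complete proof would require either reproducing that machinery or citing it, which is exactly what the present paper does.
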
	
	
\begin{corl}\label{isolated}
Under the hypothesis of Theorem \ref{genwr} the $S$-ring $\mathcal{A}$ is schurian whenever the group $\aut(\mathcal{A}_{U/L})$ is $2$-isolated.

\end{corl}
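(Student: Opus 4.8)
The plan is to verify the schurity criterion of Theorem~\ref{genwr} by making the most economical choice of the auxiliary groups $\Delta_0$ and $\Delta_1$, namely the full automorphism groups of the two constituents. Set $\Delta_0=\aut(\mathcal{A}_{G/L})$ and $\Delta_1=\aut(\mathcal{A}_U)$. Since the automorphism group of any Cayley scheme contains the right regular representation, we have $\Delta_0\geq (G/L)_{right}$ and $\Delta_1\geq U_{right}$, and each of $\Delta_0,\Delta_1$ is trivially $2$-equivalent to itself, that is, to $\aut(\mathcal{A}_{G/L})$ and $\aut(\mathcal{A}_U)$ respectively. Thus the only hypothesis of Theorem~\ref{genwr} that requires work is the matching condition $(\Delta_0)^{U/L}=(\Delta_1)^{U/L}$ on the common section $U/L$.

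To handle the matching condition I would first record that the section $S$-ring of $\mathcal{A}$ on $U/L$ is computed consistently from either side, that is $(\mathcal{A}_U)_{U/L}=\mathcal{A}_{U/L}=(\mathcal{A}_{G/L})_{U/L}$; this is just the transitivity of the restriction operation, using that $L$ is an $\mathcal{A}_U$-subgroup and that the image of $U$ is an $\mathcal{A}_{G/L}$-subgroup. The decisive ingredient is then the following restriction principle: if $\mathcal{B}$ is a schurian $S$-ring over an abelian group $H$ and $F$ is a $\mathcal{B}$-section, then the induced group $\aut(\mathcal{B})^F$ is $2$-equivalent to $\aut(\mathcal{B}_F)$. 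Granting this, I would apply it twice, to $\mathcal{B}=\mathcal{A}_{G/L}$ with $F=U/L$ and to $\mathcal{B}=\mathcal{A}_U$ with $F=U/L$ (both are schurian by hypothesis, and $\mathcal{A}_{U/L}$ is schurian by Lemma~\ref{ssection}); by the previous remark both target $S$-rings are $\mathcal{A}_{U/L}$, so both $(\Delta_0)^{U/L}$ and $(\Delta_1)^{U/L}$ are $2$-equivalent to $\aut(\mathcal{A}_{U/L})$.

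Now the hypothesis that $\aut(\mathcal{A}_{U/L})$ is $2$-isolated does the rest: by definition it is the only group $2$-equivalent to it, so each of $(\Delta_0)^{U/L}$ and $(\Delta_1)^{U/L}$, acting on $U/L$, must coincide with $\aut(\mathcal{A}_{U/L})$. In particular $(\Delta_0)^{U/L}=(\Delta_1)^{U/L}$, and all the hypotheses of Theorem~\ref{genwr} are met, whence $\mathcal{A}$ is schurian.

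The hard part will be the restriction principle, since it is exactly the place where $2$-equivalence (not mere containment) is needed: one always has $\aut(\mathcal{B})^F\leq \aut(\mathcal{B}_F)$, so the $2$-orbits of $\aut(\mathcal{B})^F$ refine those of $\aut(\mathcal{B}_F)$, and the content is the reverse refinement of partitions. I would prove it by showing that the point stabilizer $\aut(\mathcal{B})^F_{\bar e}$ is already transitive on every basic set of $\mathcal{B}_F$: each such basic set has the form $\pi(X)$ for a basic set $X$ of $\mathcal{B}$ lying in the numerator of $F$, where $\pi$ is the canonical projection; since $\mathcal{B}$ is schurian, $\aut(\mathcal{B})_e$ is transitive on $X$, and the projections of its elements lie in $\aut(\mathcal{B})^F_{\bar e}$ and act transitively on $\pi(X)$. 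Promoting this point transitivity to transitivity on basic relations by means of the right translations in $F_{right}\leq \aut(\mathcal{B})^F$, and using that $\mathcal{B}_F$ is schurian to identify its basic relations with the $2$-orbits of $\aut(\mathcal{B}_F)$, yields the required equality of $2$-orbit partitions.
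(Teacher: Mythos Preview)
Your argument is correct and is essentially the standard derivation of this corollary from Theorem~\ref{genwr}. Note, however, that the paper itself gives no proof: both Theorem~\ref{genwr} and Corollary~\ref{isolated} are quoted from \cite{MP3} (as stated at the beginning of Section~2), so there is nothing to compare against. Your choice $\Delta_0=\aut(\mathcal{A}_{G/L})$, $\Delta_1=\aut(\mathcal{A}_U)$ and the ``restriction principle'' that $\aut(\mathcal{B})^{F}$ is $2$-equivalent to $\aut(\mathcal{B}_F)$ for schurian $\mathcal{B}$ is exactly the intended mechanism, and your justification of that principle via the transitivity of $\aut(\mathcal{B})_e$ on basic sets, together with $F_{right}\le\aut(\mathcal{B})^{F}\le\aut(\mathcal{B}_F)$ and Lemma~\ref{ssection}, is sound.
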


\begin{lemm}\label{regorb}
Let $\mathcal{A}$ be an $S$-ring over $G$. Suppose that the point stabilizer of $\aut(\mathcal{A})$ has a faithful regular orbit. Then $\aut(\mathcal{A})$  is $2$-isolated.
\end{lemm}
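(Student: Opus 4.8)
The plan is to fix $\Gamma=\aut(\mathcal{A})$, to let $\Gamma_1\leq\sym(G)$ be an arbitrary group that is $2$-equivalent to $\Gamma$, and to prove $\Gamma_1=\Gamma$; by the definition of $2$-isolation this is all that is required. The first observation I would record is that $\Gamma$ is $2$-closed: by definition $\aut(\mathcal{A})$ consists of exactly those permutations of $G$ that fix every basic relation $R(X)$ setwise, so it is precisely the group of all permutations preserving each orbit of $\Gamma$ on $G\times G$. Since each $R(X)$ is $\Gamma_1$-invariant (being a union of $2$-orbits of $\Gamma_1$, which coincide with those of $\Gamma$), every $\gamma\in\Gamma_1$ preserves all the relations $R(X)$ and hence lies in $\aut(\mathcal{A})$. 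Therefore $\Gamma_1\leq\Gamma$.

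Next I would pass to the point stabilizers. Since the diagonal of $G\times G$ is a single $2$-orbit of the transitive group $\Gamma\geq G_{right}$, the same holds for $\Gamma_1$, which is thus transitive as well. For a transitive group the orbits of the stabilizer of $e$ on $G$ are exactly the neighbourhoods $R(e)=\{w:(e,w)\in R\}$ of its $2$-orbits $R$; hence $\orb(\Gamma,G\times G)=\orb(\Gamma_1,G\times G)$ forces $\orb(\Gamma_e,G)=\orb((\Gamma_1)_e,G)$. In particular the distinguished faithful regular orbit $\Delta$ of $H:=\Gamma_e$ is also an orbit of $H_1:=(\Gamma_1)_e$.

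The heart of the argument is then an elementary order count on $\Delta$. Regularity of $H$ on $\Delta$ gives $|\Delta|=|H|$ with trivial stabilizer $H_\delta=1$ for $\delta\in\Delta$. As $H_1\leq H$ (from $\Gamma_1\leq\Gamma$) acts transitively on $\Delta$, we obtain the squeeze $|\Delta|\leq|H_1|\leq|H|=|\Delta|$, whence $|H_1|=|H|$ and, since $H_1\leq H$, even $H_1=H$, i.e. $(\Gamma_1)_e=\Gamma_e$. Finally transitivity of both groups yields $|\Gamma_1|=|G|\cdot|(\Gamma_1)_e|=|G|\cdot|\Gamma_e|=|\Gamma|$, and together with $\Gamma_1\leq\Gamma$ this gives $\Gamma_1=\Gamma$, as desired.

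I expect the delicate points to be the two structural reductions rather than the computation: that $\aut(\mathcal{A})$ is $2$-closed, so that every $2$-equivalent group is automatically contained in it, and that $2$-equivalence of transitive groups descends to equality of the suborbits of the point stabilizers. Both are standard facts about $S$-rings and coherent configurations, and once they are in place the faithful regular orbit does all the real work through the squeeze $|\Delta|\leq|H_1|\leq|H|=|\Delta|$; I therefore anticipate no serious obstacle beyond stating these reductions cleanly.
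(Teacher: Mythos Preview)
Your argument is correct. The paper itself does not supply a proof of this lemma; it is listed in the preliminaries section among facts imported from \cite{MP3}, so there is no ``paper's own proof'' to compare against. Your proof is the standard one: use that $\aut(\mathcal{A})$ is $2$-closed to get $\Gamma_1\leq\Gamma$, descend $2$-equivalence to equality of suborbits of the point stabilizers, and then run the order squeeze on the faithful regular orbit to force $(\Gamma_1)_e=\Gamma_e$ and hence $\Gamma_1=\Gamma$. Each of the two structural reductions you flag as delicate is indeed a standard fact (the first is the definition of $2$-closure applied to the automorphism group of a coherent configuration; the second is the usual bijection between $2$-orbits of a transitive group and orbits of a point stabilizer), so the proof goes through without obstacle.
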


\begin{defn}
An $S$-ring $\mathcal{A}$ is called \emph{quasi-thin} if $|X|\leq 2$ for every $X\in \mathcal{S}(\mathcal{A})$.
\end{defn}

\begin{defn}
A basic set $X\neq \{e\}$ of a quasi-thin $S$-ring $\mathcal{A}$ is called \emph{an orthogonal}, if $X \subseteq Y Y^{-1}$ for some $Y\in \mathcal{S}(\mathcal{A})$.
\end{defn}

\begin{lemm}\label{quasithin}
Any commutative quasi-thin $S$-ring $\mathcal{A}$ is schurian. Moreover, if it has at least two orthogonals, then the group $\aut(\mathcal{A})_e$ has a faithful regular orbit.
\end{lemm}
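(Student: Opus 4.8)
The plan is to reduce schurity to the construction of combinatorial automorphisms that interchange the two elements of each basic set of size $2$, and then to read off the regular orbit from these automorphisms. First I would fix the thin radical $H=\{g\in G:\{g\}\in\mathcal{S}(\mathcal{A})\}$, which is an $\mathcal{A}$-subgroup. Every $\sigma\in P:=\aut(\mathcal{A})_e$ fixes $H$ pointwise, and since each basic set has at most two elements and $P$ preserves every basic set setwise, $\sigma$ acts on each basic set as the identity or as a transposition; hence $\sigma^2=\id$ and $P$ is an elementary abelian $2$-group whose orbits on $G$ have size at most $2$. As $Orb(P,G)$ always refines $\mathcal{S}(\mathcal{A})$, the $S$-ring $\mathcal{A}$ is schurian if and only if for every basic set $\{a,b\}$ with $a\neq b$ there is $\sigma\in P$ with $a^{\sigma}=b$.

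To produce such $\sigma$, I would exploit the products $\underline{Y}\,\underline{Y^{-1}}$. For a basic set $Y=\{a,b\}$ a direct computation gives $\underline{Y}\,\underline{Y^{-1}}=2\underline{\{e\}}+\underline{\{ab^{-1},a^{-1}b\}}$, so by Lemma \ref{aset} the set $\{ab^{-1},a^{-1}b\}$ is an $\mathcal{A}$-set; writing $d=ab^{-1}$, either $d\in H$ or $\{d,d^{-1}\}$ is itself a basic set, in which case it is an orthogonal. Combining this with Schur's multiplier theorems (Theorems \ref{burn} and \ref{sch}), Lemma \ref{radbasic} and the separation theorem (Theorem \ref{separat}), one controls the radicals of the size-$2$ basic sets. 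I would then argue by induction on $|G|$. If $\mathcal{A}$ is primitive, then $H=\{e\}$ or $H=G$, and in either case $\mathcal{A}$ is cyclotomic over a group of prime order, hence schurian by definition. If $\mathcal{A}$ is imprimitive, a nontrivial radical of some basic set, or any nontrivial proper $\mathcal{A}$-subgroup, yields a normal $\mathcal{A}$-subgroup $L$ over which $\mathcal{A}$ is a (generalized) $U/L$-wreath product; its factors $\mathcal{A}_U$ and $\mathcal{A}_{G/L}$ are again commutative and quasi-thin (Lemmas \ref{restrict} and \ref{ssection}), so schurity follows from Lemma \ref{schurwr} and Theorem \ref{genwr} together with the inductive hypothesis. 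The main obstacle here is precisely the verification that the combinatorially prescribed swaps are genuine automorphisms: this is what the wreath decomposition, the structure-constant identity of Lemma \ref{eq}, and the multiplier constraints are used to guarantee.

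Finally, for the second assertion I would assume $\mathcal{A}$ has at least two orthogonals and prove that $P\cong\mathbb{Z}_2$. Because $P$ preserves basic sets, any regular orbit of $P$ has size $|P|\le 2$; thus a faithful regular orbit exists precisely when $P\cong\mathbb{Z}_2$ and $P$ moves some pair, and that pair is then the desired orbit. The presence of an orthogonal forces a basic set of size $2$, so by the schurity already established $P$ swaps it and $P\neq\{\id\}$; it remains to rule out $|P|>2$. The key point is a rigidity statement: an orthogonality relation $X\subseteq YY^{-1}$ ties the action of $P$ on $X$ to its action on $Y$, and propagating two independent such relations through the basic sets by means of Lemma \ref{eq} should force every transposition in $P$ to coincide with a single involution acting as inversion on each moved pair. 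Establishing this rigidity — that two orthogonals prevent the swaps on distinct basic sets from being chosen independently — is the step I expect to be the hardest; once it is in hand, the faithful regular orbit is immediate, and via Lemma \ref{regorb} and Corollary \ref{isolated} it delivers the $2$-isolation needed in later applications.
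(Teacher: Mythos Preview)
The paper does not contain a proof of this lemma. Lemma~\ref{quasithin} sits in Section~2 (Preliminaries), which opens with the remark that ``the most part of them is taken from~\cite{MP3}'', and indeed the lemma is simply stated and then used; the actual argument lives in Muzychuk--Ponomarenko, \emph{On Schur $2$-groups}~\cite{MP3}. So there is no in-paper proof to compare your attempt against.

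As for your sketch itself: the first paragraph correctly isolates the problem (build, for each size-$2$ basic set, a combinatorial automorphism swapping its two points), and the computation $\underline{Y}\,\underline{Y^{-1}}=2e+\underline{\{ab^{-1},a^{-1}b\}}$ is the right starting observation. But the inductive step you outline is not yet a proof. Knowing that $\mathcal{A}$ has a nontrivial proper $\mathcal{A}$-subgroup does not by itself exhibit $\mathcal{A}$ as a generalized wreath product, and even when it does, applying Theorem~\ref{genwr} requires producing the groups $\Delta_0,\Delta_1$ with matching restrictions to $U/L$; you have not indicated how the quasi-thin hypothesis makes that automatic. The schurity part therefore still has a real gap.

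For the second assertion your reduction is sound: since (once schurity is established) the orbits of $P=\aut(\mathcal{A})_e$ coincide with the basic sets, all of size at most~$2$, a faithful regular orbit can exist only if $|P|=2$, and then any size-$2$ basic set is one. What is missing is exactly the step you flag as hardest: showing that two orthogonals force $|P|\le 2$, i.e.\ that the swaps on different basic sets cannot be chosen independently. Your appeal to ``propagating two independent orthogonality relations via Lemma~\ref{eq}'' is a plan, not an argument; nothing you have written explains why a second orthogonal collapses the freedom. If you want a self-contained proof rather than a citation, this rigidity statement is the place where genuine work (along the lines of~\cite{MP3}) is required.
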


\section{$S$-rings over $D=\mathbb{Z}_3\times \mathbb{Z}_{3^n}$: basic sets containing elements of order $3$ }

A set $X \subset D$ is called \emph{highest} (in $D$) if it contains an element of order $3^n$. Given an $S$-ring $\mathcal{A}$ over  $D$ set $\rad(\mathcal{A})$ to be the group generated by the groups $\rad(X)$, where $X$ runs over the highest basic sets of $\mathcal{A}$. Clearly, $\rad(\mathcal{A}) = e$ if and only if every highest basic set of $\mathcal{A}$  has trivial radical. A set $X \subset D$ is called \emph{regular} if it consists of  elements of the same order. An $S$-ring $\mathcal{A}$ over a group $D$ is called \emph{regular} if each highest basic set of $\mathcal{A}$ is regular. A set $X \subset D$ is called \emph{rational} if $X=\cup_m X^{(m)}$, where $m$ runs over integers coprime to $3$. An $S$-ring $\mathcal{A}$ over a group $D$ is called \emph{rational} if each  basic set of $\mathcal{A}$ is rational. 

In this section we describe the basic sets containig an element of order $3$. The main result of this section is given by the following two lemmas.

\begin{lemm}\label{Tc1}
Let $\mathcal{A}$ be an $S$-ring over  $D$. Then exactly one of the following statements holds:

$(1)$ $T_{c_1}$ is rational, nonregular, and $T_{c_1}\cup L$ is an $\mathcal{A}$-subgroup for some $\mathcal{A}$-subgroup $L\leq E$ such that $L\cap C_1=\{e\}$;

$(2)$ $T_{c_1}$ is regular. In this case $C_1$ is an $\mathcal{A}$-subgroup or $E$ is an $\mathcal{A}$-subgroup.

\end{lemm}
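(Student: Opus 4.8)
The plan is to split on whether $T:=T_{c_1}$ is regular. If it is, then every element of $T$ has order $3$, so $T\subseteq E\setminus\{e\}$; hence $\langle T\rangle$ is a subgroup of $E$ containing $c_1$, and the only such subgroups are $C_1$ and $E$. By Lemma~\ref{radbasic}, $\langle T\rangle$ is an $\mathcal{A}$-subgroup, which is exactly alternative~(2). Since ``regular'' and ``nonregular'' are mutually exclusive and $(1)$ asserts nonregularity while $(2)$ asserts regularity, the two alternatives automatically exclude each other, so it remains only to treat the nonregular case and establish~(1) there.

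So assume $T$ is nonregular and set $L:=\rad(T)$, an $\mathcal{A}$-subgroup by Lemma~\ref{radbasic}. I would first record two facts that come for free. First $c_1\notin L$: otherwise $c_1^{-1}\in L$, and since $c_1\in T$ and $Tc_1^{-1}=T$ we would get $e=c_1c_1^{-1}\in T$, contradicting $\{e\}\in\mathcal{S}(\mathcal{A})$. Second $L\le E$: if $L$ contained an element of order $\ge 9$, then a suitable power of it of order $3$ would be a nontrivial element of $C_1\cap L$ (its $S$-component dies upon cubing), again forcing $c_1\in L$. Together these give $L\le E$ and $L\cap C_1=\{e\}$, which are exactly the demands on $L$ in~(1). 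Thus it remains only to prove $T\cup L=\langle T\rangle$, i.e.\ $T=\langle T\rangle\setminus\rad(T)$: once $T$ is realized as a difference of two subgroups it is automatically rational, because each power map $x\mapsto x^m$ with $\gcd(m,3)=1$ fixes both $\langle T\rangle$ and $L$ setwise and hence fixes $T$, so that $T=\bigcup_m T^{(m)}$.

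To obtain $T=\langle T\rangle\setminus\rad(T)$ I would invoke Theorem~\ref{separat}, so the task becomes exhibiting a separating group. By Lemma~\ref{oarg}, for every $a\in T$ of order $>3$ the whole coset $aC_1$ lies in $T$; consequently the high part $T^{\mathrm{high}}=\{t\in T:|t|>3\}$ is a union of $C_1$-cosets. If $T\cap E\subseteq C_1$, then $T\setminus C_1=T^{\mathrm{high}}$ is $C_1$-invariant, so $C_1$ is a separating group; Theorem~\ref{separat} yields $T=\langle T\rangle\setminus\rad(T)$ with $\rad(T)\le C_1$, and combined with $\rad(T)\cap C_1=\{e\}$ this forces $L=\rad(T)=\{e\}$, finishing this subcase. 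If instead $T$ contains an element $y$ of order $3$ outside $C_1$, then $\langle y,c_1\rangle=E$, so $E\le\langle T\rangle$, and I would try to use $E$ itself as the separating group, which requires $T^{\mathrm{high}}$ to be a union of $E$-cosets.

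The main obstacle is precisely this last point: upgrading the known $C_1$-invariance of $T^{\mathrm{high}}$ to full $E$-invariance, equivalently $S$-invariance. This cannot come from the multiplier theorems alone, since a power map $x\mapsto x^m$ fixes the $S$-direction and moves a high element only within its $C_1$-coset (the order-$3$ power of any high element lands in the characteristic subgroup $C_1=3^{n-1}D$), so it can never supply the needed $S$-shift. I expect to extract that shift from the order-$3$ element $y\in T\setminus C_1$: the idea is to transport the translation by $y$ onto the high elements through the product structure of $\mathcal{A}$, showing $ay\in T$ for every high $a\in T$ and hence $T^{\mathrm{high}}y=T^{\mathrm{high}}$. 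Concretely I would analyze how $T$ meets the $E$-cosets cut out by the highest basic sets of $\mathcal{A}$, whose order-$3$ layers are governed by Lemma~\ref{lowset}, and use the structure-constant relations of Lemma~\ref{eq} to rule out a high $E$-coset meeting $T$ in a proper nonempty union of $C_1$-cosets. Once $T^{\mathrm{high}}$ is known to be a union of $E$-cosets, $E$ is a separating group and Theorem~\ref{separat} closes the argument; I regard this $S$-invariance step as the technical heart of the lemma.
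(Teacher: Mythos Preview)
Your skeleton matches the paper's: the regular case is exactly as you say, and in the nonregular case the target is $T=\langle T\rangle\setminus\rad(T)$ via Theorem~\ref{separat}, with your observations $c_1\notin\rad(T)$ and $\rad(T)\le E$ both correct. The subcase $T\cap E\subseteq C_1$ with $C_1$ as separating group is also fine.

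The genuine gap is precisely the step you flag and leave undone: the $E$-invariance of $T\setminus E$ when $T\cap E\not\subseteq C_1$. Your proposed tools do not bite here. Lemma~\ref{lowset} requires $\rad(T)=e$, which is part of what you are proving, and the reference to ``highest basic sets'' is off target: the whole question lives inside the single basic set $T_{c_1}$ and its intersection with $E$. The structure-constant idea is the right one, but it does not close by itself: from $|c_1T\cap T|=|yT\cap T|$ one gets
\[
|y(T\setminus E)\cap(T\setminus E)|=|T\setminus E|+\bigl(|c_1(T\cap E)\cap(T\cap E)|-|y(T\cap E)\cap(T\cap E)|\bigr),
\]
and for a non-inverse-closed $T\cap E$ (e.g.\ $\{c_1,c_1^2,s\}$) the bracket is strictly positive, giving a contradiction rather than the desired invariance. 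So one first has to control the shape of $T\cap E$.

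The paper supplies that control by a separate preliminary result (Lemma~\ref{regTc1}) proving that a nonregular $T_{c_1}$ is automatically rational. The argument is a global count in $\underline{T}\,\underline{T}^{-1}$: with $Y=T\cap E$ and $\alpha=|Y|-|c_1Y\cap Y|$ one has $c^{T}_{T,T^{-1}}=|T|-\alpha$ (using $C_1\le\rad(T\setminus E)$), so the contributions of $e$, $T$, and $T^{-1}$ already account for at least $|T|+2|T|(|T|-\alpha)$ of the total weight $|T|^2$, forcing $|T|\le 2\alpha-1$; one then checks this fails for every non-inverse-closed $Y\ni c_1$ (nonrationality forces $|Y|\le 4$, hence $\alpha\le 3$, while nonregularity and Lemma~\ref{oarg} force $|T|$ large). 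Once rationality is in hand, $T\cap E$ is a union of inverse pairs containing $\{c_1,c_1^2\}$, leaving exactly four explicit possibilities; in each, a direct computation shows $|c_1(T\cap E)\cap(T\cap E)|=|q(T\cap E)\cap(T\cap E)|$, so the displayed bracket vanishes, $q\in\rad(T\setminus E)$, and $E$ serves as separating group. This rationality-first step is the missing idea in your proposal.
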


\begin{lemm}\label{Ts}
Let $\mathcal{A}$ be an $S$-ring over  $D$ and $T_{c_1}$ is rational and nonregular. Denote the set of all basic sets that contain an element of order $3$ by $I$. Then exactly one of the following statements holds:

$(1)$ $I=\{T_{c_1},\{q,q^2\},\{q,q^2\}T_{c_1}\},$ where $q\in E\setminus C_1$ and $T_{c_1}\cup \{e\}$ is a cyclic $\mathcal{A}$-subgroup;

$(2)$ $I=\{T_{c_1},\{q,q^2\}\},$ where $q\in E\setminus C_1$ and $T_{c_1}\cup \{e,q,q^2\}$ is an $\mathcal{A}$-subgroup;

$(3)$ $I=\{T_{c_1}\}$ ;

$(4)$ $I=\{T_{c_1},\{q\},\{q^2\},qT_{c_1},q^2T_{c_1}\},$ where $q\in E\setminus C_1$ and $T_{c_1}\cup \{e\}$ is a cyclic $\mathcal{A}$-subgroup;

$(5)$ $C_1\leq \rad(T)$ for every $T\in I\setminus \{T_{c_1}\}$.

\end{lemm}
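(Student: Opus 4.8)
The plan is to feed the hypothesis into Lemma~\ref{Tc1}: since $T_{c_1}$ is rational and nonregular, part~(1) of that lemma provides an $\mathcal A$-subgroup $L\le E$ with $L\cap C_1=\{e\}$ such that $H:=T_{c_1}\cup L$ is an $\mathcal A$-subgroup, and one has $T_{c_1}=H\setminus L$. Rationality forces $\{c_1,c_1^{-1}\}\subseteq T_{c_1}$, so $C_1\le H$ and hence $E_0:=H\cap E$ satisfies $C_1\le E_0\le E$; thus $E_0\in\{C_1,E\}$. The order-$3$ elements of $D$ are exactly $E\setminus\{e\}$, those inside $T_{c_1}$ are $E_0\setminus L$, and the basic sets comprising $I\setminus\{T_{c_1}\}$ are precisely those covering the residual set $R:=(E\setminus E_0)\cup(L\setminus\{e\})$. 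The whole argument is then a case analysis governed by $E_0$ and by how $R$ splits into basic sets.

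I would first treat $E_0=E$, i.e.\ $E\le H$, where $R=L\setminus\{e\}$. If $L=\{e\}$ then $R=\varnothing$ and $I=\{T_{c_1}\}$, which is statement~(3). If $L=\grp{q}$ with $q\in E\setminus C_1$, then $T_{c_1}q=(H\setminus L)q=H\setminus L=T_{c_1}$, so $L\le\rad(T_{c_1})$ and the products $qT_{c_1},q^2T_{c_1}$ collapse to $T_{c_1}$; the only basic sets meeting $R=\{q,q^2\}$ lie in the $\mathcal A$-subgroup $H=T_{c_1}\cup\{e,q,q^2\}$, and this is statement~(2). (Here I would also record whether $\{q,q^2\}$ is a single basic set or splits as $\{q\},\{q^2\}$, and reconcile this with the precise form of~(2).)

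The substantial regime is $E_0=C_1$. A subgroup $H$ of $D$ with $H\cap E=C_1$ has a unique subgroup of order $3$, hence is cyclic, so $T_{c_1}\cup\{e\}=H$ is a cyclic $\mathcal A$-subgroup and $R=E\setminus C_1$ has six elements; moreover $\rad(T_{c_1})=\{e\}$ here, so $q\notin\rad(T_{c_1})$ for $q\in R$. I would now dichotomise: either $C_1\le\rad(T)$ for every $T\in I\setminus\{T_{c_1}\}$, which is statement~(5); or some residual basic set fails this. In the latter case the goal is to exhibit a thin element $\{q\}$ or a pair $\{q,q^2\}$ inside $R$ with $q\in E\setminus C_1$. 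Lemma~\ref{oarg} shows that every element of order $>3$ in a basic set containing an order-$3$ element drags its whole $C_1$-coset into that set, so the failure of $C_1\le\rad$ must come from the order-$3$ part; adapting the multiplier-orbit analysis from the proof of Lemma~\ref{lowset} (together with Lemma~\ref{cycleorbit}) to the order-$3$ parts of the residual sets isolates such a $q$, while Theorem~\ref{burn} links $q$ with $q^2$. Once $q$ is found, Lemma~\ref{aset} makes $qT_{c_1},q^2T_{c_1}$ (thin case) or $\{q,q^2\}T_{c_1}$ (paired case) genuine new basic sets, and counting their order-$3$ elements exhausts the six elements of $R$, producing statement~(4) or statement~(1) respectively.

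The hard part will be this last regime, namely proving that the non-absorbing alternative is forced into exactly the shapes~(1) and~(4) with no \emph{mixed} behaviour: one must exclude a configuration in which some residual basic sets are small while others absorb $C_1$, and one must guarantee the listed alternatives are mutually exclusive. The leverage comes from disjointness of basic sets together with the product rule (Lemma~\ref{aset}) and the constant-intersection property (Lemma~\ref{intersection}) applied to the $\mathcal A$-subgroup $C_1$: once a single small residual set is located, the translates $qT_{c_1},q^2T_{c_1}$ (or $\{q,q^2\}T_{c_1}$) are forced to absorb all remaining order-$3$ elements of $R$, leaving no room for a $C_1$-absorbing set. Keeping the rational multiplier action of Theorem~\ref{burn} consistent throughout is what makes the bookkeeping of $q$ versus $q^2$ work and what ultimately separates the five alternatives.
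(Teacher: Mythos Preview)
Your opening via Lemma~\ref{Tc1} and the case split on $E_0=H\cap E$ is sound, and the $E_0=E$ branches correctly yield statements~(2) and~(3). The substantive gaps lie in the $E_0=C_1$, non-(5) regime.

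First, a concrete error: Lemma~\ref{aset} does \emph{not} make $\{q,q^2\}T_{c_1}$ a basic set; it only guarantees an $\mathcal A$-set, and promotes to a basic set only when one factor is a singleton. The paper has to argue separately that $\{q,q^2\}T_{c_1}$ cannot split, by writing a putative decomposition $Y=qA\cup q^2(T_{c_1}\setminus A)$, $Z=q(T_{c_1}\setminus A)\cup q^2A$ and deriving a contradiction from the coefficient of $c_1$ in $\underline Y\,\underline Z$.

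Second, and more seriously, your mechanism for producing a thin set $\{q\}$ or a pair $\{q,q^2\}$ is not really there. Invoking the orbit analysis of Lemma~\ref{lowset} bounds only $|T\cap E|$ for a residual $T$ with $\rad(T)=e$; it says nothing about $|T|$ itself, and $T$ may carry many higher-order elements (all in full $C_1$-cosets by Lemma~\ref{oarg}) while still having $C_1\not\le\rad(T)$ because of its order-$3$ part. The paper's route is genuinely different and heavier: it separates (a) all residual sets rational with trivial radical, where the identity $|c_1X\cap X|=|X|-2$ is fed into Lemma~\ref{eq} to force $|X|=2$ or $|T_{c_1}|=|X|$, and then a counting argument on $\underline X^2$ and on $\underline{T_{c_1}}\,\underline X$ eliminates the large option; and (b) some residual set nonrational with trivial radical, where a case-by-case analysis on $|T\cap E|\in\{1,2,3\}$ (again driven by Lemma~\ref{eq} and coefficient comparisons in products such as $\underline X\,\underline T$ and $\underline T\,\underline{T_{c_1}}$) forces $|T|=1$ and hence statement~(4). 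None of Lemmas~\ref{lowset}, \ref{cycleorbit}, \ref{intersection} supplies this; the engine is the structure-constant identity of Lemma~\ref{eq} together with explicit coefficient bookkeeping, which your proposal does not invoke.
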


The proofs of Lemma \ref{Tc1} and Lemma \ref{Ts} will be given later. Now we need to prove an auxiliary lemma.

\begin{lemm}\label{regTc1}
Let $T_{c_1}$ be nonregular. Then $T_{c_1}$ is rational.
\end{lemm}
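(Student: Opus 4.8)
The plan is to reduce the assertion to a single membership and then read off the required structure from Lemma~\ref{oarg}. I first record the reduction. Since $|c_1|=3$, for every $m$ coprime to $3$ we have $c_1^m=c_1$ when $m\equiv 1\ (\mathrm{mod}\ 3)$ and $c_1^m=c_1^{-1}$ when $m\equiv 2\ (\mathrm{mod}\ 3)$. By Theorem~\ref{burn} each $T_{c_1}^{(m)}$ is a basic set, so comparing $c_1^m$ with $c_1$ forces $T_{c_1}^{(m)}=T_{c_1}$ for $m\equiv 1$ and $T_{c_1}^{(m)}=T_{c_1}^{-1}$ for $m\equiv 2$. Hence $\bigcup_m T_{c_1}^{(m)}=T_{c_1}\cup T_{c_1}^{-1}$, and $T_{c_1}$ is rational if and only if $T_{c_1}^{-1}=T_{c_1}$, i.e. if and only if $c_1^{-1}\in T_{c_1}$. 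So it suffices to prove that a nonregular $T_{c_1}$ is inverse-closed.

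Next I exploit nonregularity. As $c_1\in T_{c_1}$ has order $3$ and $T_{c_1}$ is not regular, $T_{c_1}$ contains some $a$ with $|a|>3$. For any such $a$, writing $|a|=3^k$ with $k\ge 2$, the power $a^{3^{k-1}}$ has trivial $S$-component and hence lies in $C_1$; this is precisely why Lemma~\ref{oarg}, applied to the pair $a,c_1$, yields $a\{1,c_1,c_1^2\}=aC_1\subseteq T_{c_1}$. Running this over every element of $T_{c_1}$ of order $>3$ shows that the set of all elements of $T_{c_1}$ of order $>3$ is a union of $C_1$-cosets.

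I then produce a separating group in the sense of Theorem~\ref{separat}. Suppose first that $T_{c_1}$ contains no element of order $3$ outside $C_1$. Then $T_{c_1}\setminus C_1$ is exactly the set of elements of $T_{c_1}$ of order $>3$, which by the previous step is a union of $C_1$-cosets; thus $C_1\le\rad(T_{c_1}\setminus C_1)$, and since $c_1\in T_{c_1}\cap C_1$ and $a\in T_{c_1}\setminus C_1$, Theorem~\ref{separat} applies with $H=C_1$ and gives $T_{c_1}=\langle T_{c_1}\rangle\setminus\rad(T_{c_1})$. This right-hand side is a difference of two subgroups (both $\mathcal{A}$-subgroups by Lemma~\ref{radbasic}), and any such difference is invariant under every $\sigma_m$ with $\gcd(m,3)=1$, since an automorphism of $D$ maps each subgroup onto itself; in particular it is inverse-closed, so $c_1^{-1}\in T_{c_1}$ and $T_{c_1}$ is rational. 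When $\langle T_{c_1}\rangle$ is cyclic this hypothesis holds automatically ($C_1$ is then the unique subgroup of order $3$ of $\langle T_{c_1}\rangle$), and the same conclusion can be read off from Lemma~\ref{cyclering}: a nonregular basic set over a cyclic $3$-group cannot be an automorphism orbit, so it must have the form $\langle T_{c_1}\rangle\setminus\rad(T_{c_1})$.

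The main obstacle is the remaining case, in which $T_{c_1}$ contains an element $q$ of order $3$ with $q\notin C_1$, so that $E=\langle c_1,q\rangle\le\langle T_{c_1}\rangle$ and $\langle T_{c_1}\rangle$ is noncyclic. Here Lemma~\ref{oarg} is useless for $q$: because the order-$3$ power of every element of order $>3$ lies in $C_1$, the multiplier trick only ever produces $C_1$-translates, never the translate $xq$ that would turn $E$ into a separating group, and no $\sigma_m$ can fix $q$ while sending $c_1$ to $c_1^{-1}$. I expect the resolution to be that such a $q$ cannot occur in a nonregular basic set, established by a finer analysis of the order-$3$ layer $T_{c_1}\cap E$: applying Schur's operation $X\mapsto X^{[3]}$ of Theorem~\ref{sch} with $H=E$, together with the coset counts of Lemma~\ref{intersection}, I would force $T_{c_1}\cap E$ to be $C_1$-coset closed (equivalently, contained in $C_1$), thereby returning to the case already settled, or else derive a contradiction. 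Controlling this order-$3$ layer in the noncyclic case is the crux of the whole argument.
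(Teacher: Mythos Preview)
Your reduction to showing $c_1^{-1}\in T_{c_1}$ is correct, and the case where $T_{c_1}\cap E\subseteq C_1$ is handled cleanly via Theorem~\ref{separat}. But the proof is genuinely incomplete: you explicitly leave open the case where $T_{c_1}$ contains some $q\in E\setminus C_1$, and your suggested attack through $X^{[3]}$ and Lemma~\ref{intersection} does not go through as stated. Since every element of $T_{c_1}$ of order $>3$ already has its full $C_1$-coset inside $T_{c_1}$, the $E$-coset intersections $|T_{c_1}\cap Ex|$ for such $x$ are multiples of~$3$; thus $T_{c_1}^{[3]}$ sees only the order-$3$ layer, and Theorem~\ref{sch} yields no new constraint beyond what you already know. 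There is no mechanism in your outline that forces $T_{c_1}\cap E$ to be $C_1$-closed.

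The paper bypasses this case split entirely with a counting argument in the group ring. Assume $T_{c_1}$ is nonregular and not rational, set $Y=T_{c_1}\cap E$, $a=|T_{c_1}|$, and $\alpha=|Y|-|c_1Y\cap Y|$. Non-rationality means $T_{c_1}$ and $T_{c_1}^{-1}$ are distinct basic sets, so $Y$ and $Y^{-1}$ are disjoint subsets of $E\setminus\{e\}$; hence $|Y|\le 4$, and one checks $\alpha\le 3$. Lemma~\ref{oarg} gives $C_1\le\rad(T_{c_1}\setminus Y)$, so the structure constant $c^{T_{c_1}}_{T_{c_1},T_{c_1}^{-1}}=|c_1T_{c_1}\cap T_{c_1}|=a-\alpha$. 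Then $e$, $T_{c_1}$, and $T_{c_1}^{-1}$ together contribute at least $a+2(a-\alpha)a$ to the total mass $a^2$ of $\underline{T_{c_1}}\,\underline{T_{c_1}}^{-1}$, forcing $a\le 2\alpha-1\le 5$; this is incompatible with nonregularity and the bound on~$\alpha$ in each case. This argument is uniform in $Y$ and never needs to separate the cyclic from the noncyclic situation, which is exactly the obstacle you ran into.
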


\begin{proof}
Assume the contrary. Let $T_{c_1}\cap E=Y$, $|T_{c_1}|=a$, and $\alpha=|Y|-|c_1Y\cap Y|$. Note that $\alpha \geq 1$ because $c_1\in Y$ but $c_1\notin c_1Y$. Since $T_{c_1}$ is not rational, $|Y|\leq 4$ and if $|Y|=4$ then  $|c_1Y\cap Y|\geq 1$. Therefore $\alpha \leq 3$. Lemma \ref{oarg} implies that $C_1\leq \rad(T_{c_1})\setminus Y$. So  

$$C^{T_{c_1}}_{T_{c_1}T_{c_1}^{-1}}=|c_1T_{c_1}\cap T_{c_1}|=a-\alpha.$$ 
Hence every element from $T_{c_1}\cup T_{c_1^2}$ enters the element $\underline{T_{c_1}}~\underline{T_{c_1}}^{-1}$ with coefficient $a-\alpha$. Since $e$ enters $\underline{T_{c_1}}~\underline{T_{c_1}}^{-1}$ with coefficient $a$, we conclude that  at least $2a^2-2\alpha a +a$ elements enter $\underline{T_{c_1}}~\underline{T_{c_1}}^{-1}$. On the other hand, exactly $a^2$ elements enter this element. Thus we arrive to a contradiction if $2a^2-2\alpha a +a>a^2$ or equivalently $a>2\alpha-1$. However, this holds for $\alpha=1$ because $T_{c_1}$ is nonregular; if $\alpha=2$  then $a>4$ by Lemma \ref{oarg}; if $\alpha=3$ then $a>5$ by Lemma \ref{oarg}. 
\end{proof}

\begin{proof}[Proof of  Lemma \ref{Tc1}]

If $T_{c_1}$ is not rational then $T_{c_1}$ is regular by Lemma \ref{regTc1}. In this case $C_1=\langle T_{c_1} \rangle$ or $E=\langle T_{c_1} \rangle$ and Lemma \ref{radbasic} implies Statement $(2)$ of the lemma. If $T_{c_1}$ is regular and rational, obviously, Statement $(2)$ also holds. Suppose that $T_{c_1}$ is rational and nonregular. Then it follows from Lemma \ref{oarg} that $c_1,~c_1^2 \in \rad(T_{c_1}\setminus E )$. The set  $T_{c_1}\cap E$ is a union of $1,~2,~3$ or $4$ subgroups of order $3$ without $\{e\}$. So there are four possibilities for $T_{c_1}\cap E$:

\begin{enumerate}
\item $T_{c_1}\cap E=\left\{c_1,~c_1^2\right\};$
\item $T_{c_1}\cap E=\left\{c_1,~c_1^2,~q,~q^2\right\} $ where $|q|=3;$
\item $T_{c_1}\cap E=\left\{c_1,~c_1^2,~q,~q^2,~f,~f^2\right\} $ where $|q|=|f|=3;$
\item $T_{c_1}\cap E=\left\{c_1,~c_1^2,~s,~s^2,~sc_1,~s^2c_1,~sc_1^2,~s^2c_1^2\right\}.$
\end{enumerate}

If $T_{c_1}=\langle T_{c_1}\rangle \setminus \rad(T_{c_1})$ and $\rad(T_{c_1})=\{e\}$ or $|\rad(T_{c_1})|=3$ then Statement $(1)$ of the lemma holds with $L=\rad(T_{c_1})$. Let us prove that in all cases $T_{c_1}=\langle T_{c_1}\rangle \setminus \rad(T_{c_1})$ and $\rad(T_{c_1})=\{e\}$ or $|\rad(T_{c_1})|=3$. Note that $T_{c_1}\cap C_1\neq \varnothing$ and $T_{c_1}\setminus C_1 \neq \varnothing$.  In the first and fourth cases Lemma~\ref{oarg} implies that $C_1\leq \rad(T\setminus C_1)$. By Theorem \ref{separat}, we have $T_{c_1}=\langle T_{c_1}\rangle \setminus \rad(T_{c_1})$. In the first case $\rad(T_{c_1})$ does not contain $c_1$. If $\rad(T_{c_1})$ contains an element of order $m>3$ then $x^{\frac{m}{3}}=c_1\in \rad(T_{c_1})$ or $x^{\frac{2m}{3}}=c_1\in \rad(T_{c_1})$.  Therefore $\rad(T_{c_1})=\{e\}$ or $|\rad(T_{c_1})|=3$. In the fourth case $\rad(T_{c_1})=\{e\}$. In the second case 

$$|c_1T_{c_1}\cap T_{c_1}|=|T_{c_1}|-3=C^{T_{c_1}}_{T_{c_1}T_{c_1}}=|qT_{c_1}\cap T_{c_1}|. \eqno(1)$$
Note that  $q\{c_1,~c_1^2,~q,~q^2\}\cap T_{c_1}=\{q^2\}$. Therefore $|q(T_{c_1}\setminus E)\cap T_{c_1}|=|T_{c_1}|-4=|T_{c_1}\setminus E|$. We  conclude that $q,~q^2\in \rad(T_{c_1}\setminus E)$. Also $c_1,~c_1^2\in \rad(T_{c_1}\setminus E)$  by Lemma \ref{oarg} and $T_{c_1}\cap E\neq \varnothing,~T_{c_1}\setminus E \neq \varnothing$. Thus, by Theorem \ref{separat} for separating group $E$, we have  $T_{c_1}= \langle T_{c_1} \rangle \setminus \rad(T_{c_1})$. Since   $\rad(T_{c_1})$ does not contain $c_1$, the group $\rad(T_{c_1})$ is trivial or has order $3$. In the third case $(1)$ holds and $|q\{c_1,~c_1^2,~q,~q^2~f,~f^2\}\cap T_{c_1}|=3$ because $\{f,f^2\}=\{qc_1,q^2c_1^2\}$ or $\{f,f^2\}=\{q^2c_1,qc_1^2\}$. So $|q(T\setminus E)\cap T|=|T|-6=|T\setminus E|$ and we have $q,~q^2\in \rad(T\setminus E )$. By Theorem \ref{separat} for separating group $E$, we have  $T_{c_1}= \langle T_{c_1}\rangle \setminus \rad(T_{c_1})$. Since   $\rad(T_{c_1})$ does not contain $c_1$, it is trivial or has order $3$.
\end{proof}

\begin {lemm}\label{sset}
In the conditions of Lemma \ref{Ts} let all basic sets from $I\setminus \{T_{c_1}\}$ are rational with trivial radical. Then one of Statements $(1)-(3)$ of Lemma~\ref{Ts} holds.
\end{lemm}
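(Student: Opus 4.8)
The plan is to run the four-way case analysis already supplied by Lemma~\ref{Tc1}. Since $T_{c_1}$ is rational and nonregular, that lemma together with its proof furnishes an $\mathcal{A}$-subgroup $H=\langle T_{c_1}\rangle=T_{c_1}\cup\rad(T_{c_1})$ with $T_{c_1}=H\setminus\rad(T_{c_1})$, $|\rad(T_{c_1})|\in\{1,3\}$, and four a priori forms of $T_{c_1}\cap E$ of sizes $2,4,6,8$. My first observation is that the size-$4$ case is vacuous: if $T_{c_1}\cap E=\{c_1,c_1^2,q,q^2\}$ with $q\in E\setminus C_1$, then $\langle c_1,q\rangle=E\le H$, so $T_{c_1}\cap E=E\setminus\rad(T_{c_1})$ has $9-|\rad(T_{c_1})|\in\{6,8\}$ elements, a contradiction. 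Hence only the sizes $8,6,2$ survive.

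The sizes $8$ and $6$ are quick. If $|T_{c_1}\cap E|=8$, all elements of order $3$ already lie in $T_{c_1}$, so $I=\{T_{c_1}\}$, which is Statement~$(3)$. If $|T_{c_1}\cap E|=6$, then $|\rad(T_{c_1})|=3$, $\rad(T_{c_1})\ne C_1$, and the two remaining order-$3$ elements are precisely $\rad(T_{c_1})\setminus\{e\}$. Since $\rad(T_{c_1})$ is an $\mathcal{A}$-subgroup (Lemma~\ref{radbasic}) and every basic set is either contained in or disjoint from an $\mathcal{A}$-subgroup, the basic set of those elements lies inside $\rad(T_{c_1})$; rationality forbids singletons, so it equals $\{q,q^2\}=\rad(T_{c_1})\setminus\{e\}$ and $T_{c_1}\cup\{e,q,q^2\}=H$ is an $\mathcal{A}$-subgroup, which is Statement~$(2)$.

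The substantial case is $T_{c_1}\cap E=\{c_1,c_1^2\}$, where $H$ is cyclic, $\rad(T_{c_1})=e$, $T_{c_1}=H\setminus\{e\}$, and the six elements of $E\setminus C_1$ remain. Viewing $E\cong\mathbb{F}_3^2$, its four order-$3$ subgroups are the ``lines''; write $Q_0=\{c_1,c_1^2\}$ and $P_1,P_2,P_3$ for the three remaining ``line minus identity'' pairs. By Lemma~\ref{lowset} (rationality ruling out its singleton alternative), each basic set of $I\setminus\{T_{c_1}\}$ meets $E$ in a union of at most two of the $P_i$. The engine of the argument is that $Q_0\subsetneq T_{c_1}$ is a proper nonempty subset of a basic set, so $\underline{Q_0}\notin\mathcal{A}$, combined with the line multiplication $\underline{L_a}\,\underline{L_b}=\underline{L_c}+\underline{L_d}$ (sum over the two complementary lines). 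If two pairs, say $P_1,P_2$, were standalone basic sets, then $\underline{P_1}\,\underline{P_2}=\underline{Q_0}+\underline{P_3}\in\mathcal{A}$; but $c_1\in Q_0\subseteq T_{c_1}$ forces the coefficient of every element of $T_{c_1}$ in this product to equal that of $c_1$, namely $1$, while the higher-order elements of $T_{c_1}=H\setminus\{e\}$ lie outside the support $Q_0\cup P_3$ -- a contradiction. The same device (now yielding $2\underline{Q_0}$) excludes a regular two-pair basic set next to a standalone pair. Distributing the three pairs among basic sets of size one or two then forces exactly one standalone regular pair $\{q,q^2\}$, with the other two pairs packed into a single nonregular basic set.

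To finish I would identify that nonregular set. Using Lemma~\ref{intersection} one gets $\underline{\{q,q^2\}}\,\underline{T_{c_1}}=\underline{(qH\cup q^2H)\setminus\{q,q^2\}}=:\underline{W}$ with all coefficients $1$, so $W$ is a union of basic sets whose only order-$3$ elements are the four leftover ones. A coefficient-constancy argument on $T_{c_1}$ shows $W$ has no higher-only basic summand (such a $T'$ would make $\underline{\{q,q^2\}}\,\underline{T'}$ have coefficient $0$ at $c_1$ yet a positive coefficient at a higher element of $T_{c_1}$). A companion computation of $\underline{T}\,\underline{T}^{-1}$ rules out a nonregular basic set whose $E$-part is a single pair: there $c_1$ preserves the whole nonempty $C_1$-invariant higher part (Lemma~\ref{oarg}), forcing its coefficient on $T_{c_1}$ to be $|T\setminus E|$, which the identity $\sum_{z\in H\setminus\{e\}}|zT\cap T|=|T|(\lambda-1)$ shows is incompatible with constancy unless $T$ fills two full $H$-cosets -- impossible for a single-pair $E$-part. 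Hence $W$ is the single basic set $\{q,q^2\}T_{c_1}$ and $I=\{T_{c_1},\{q,q^2\},\{q,q^2\}T_{c_1}\}$, which is Statement~$(1)$. I expect the main obstacle to be exactly this size-$2$ case: the entire difficulty is proving that the four leftover order-$3$ elements coalesce into a single basic set equal to $\{q,q^2\}T_{c_1}$, and the constancy of the coefficients of products taken against the basic set $T_{c_1}$ is the recurring tool that makes each step go through.
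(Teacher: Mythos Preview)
Your route is genuinely different from the paper's and the line-multiplication trick in $E\cong\mathbb{F}_3^2$ (exploiting that $\underline{Q_0}\notin\mathcal{A}$ because $Q_0\subsetneq T_{c_1}$) is a nice idea. The paper instead never isolates the pair structure of $E\setminus C_1$: it works with the structure-constant identity $(|X|-2)|T_{c_1}|=C^{X}_{T_{c_1}X}|X|$, derives $\underline{X}^2=|X|e+(|X|-2)\underline{T_{c_1}}+\underline{X}$, uses this to confine $X$ to $D_k$, and then does a pure cardinality count inside $D_k$ to force some $X$ with $|X\cap E|=2$ down to $|X|=2$.

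There is, however, a real gap in your ``companion computation'' excluding a nonregular basic set $T$ with a single-pair $E$-part, and your third-paragraph conclusion depends on it (so the logic is also out of order). Constancy on $T_{c_1}$ together with $\sum_{z\in K\setminus\{e\}}|zT\cap T|=|T|(\lambda-1)$ does \emph{not} force ``two full $H$-cosets''. What actually happens is this: from Lemma~\ref{oarg} the higher part of $T\cap gK$ is $C_1$-invariant while its order-$3$ part is $\{g\}$, so $\lambda\equiv 1\pmod 3$; this congruence forces $T$ to miss $K$ and every $K$-coset disjoint from $E$, whence $T\subseteq gK\cup g^{-1}K$ and $|T|=2\lambda$. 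Substituting into $(|K|-1)(|T|-2)=|T|(\lambda-1)$ gives $\lambda(|K|-\lambda)=|K|-1$, i.e.\ $\lambda\in\{1,\,|K|-1\}$; the first means $T$ is regular and the second contradicts $\lambda\equiv1\pmod3$. With this exclusion established \emph{first}, your argument does go through: every single-pair basic set in $I\setminus\{T_{c_1}\}$ is regular, so by your line-multiplication step there is exactly one (call it $\{q,q^2\}$), the remaining two pairs sit in a single basic set which must be nonregular by your $2\underline{Q_0}$ computation, and your $W$-argument (which is correct) then identifies it as $\{q,q^2\}T_{c_1}$.
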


\begin{proof}
Statement $(2)$ of Lemma~\ref{Ts} obviously holds if $T_{c_1}\cup \{e,~q,~q^2\}\leq D$, where $q,~q^2 \in E\setminus C_1$. Statement $(3)$ holds if $T_{c_1}\cup \{e\}\leq D$ and  $T_{c_1}\cup \{e\}$ is noncyclic. Put $K=T_{c_1}\cup \{e\}$. By Lemma \ref{Tc1}, we may assume that  $K$ is a cyclic group and $K\leq C$.

If $X\in I\setminus \{T_{c_1}\}$ then $|X\cap E|\in \{2,4,6\}$. If $|X\cap E|=6$ then $C_1\leq \rad(T)$, a contradiction with the assumption of the lemma. So $I\setminus \{T_{c_1}\}=\{X,Y,Z\}$, where $|X\cap E|=|Y\cap E|=|Z\cap E|=2$, or $I\setminus \{T_{c_1}\}=\{X,Y\}$, where $|X\cap E|=2,~|Y\cap E|=4$. In both cases there exists $X\in I\setminus \{T_{c_1}\}$ such that $X\cap E=\{q,q^2\}$. Note that $|c_1X\cap X|=|X|-2$. It is obvious if $X$ is regular and follows from Lemma~\ref{oarg} otherwise. Therefore Lemma \ref{eq} implies that 

$$(|X|-2)|T_{c_1}|=C^{T_{c_1}}_{XX}|T_{c_1}|=C^{X}_{T_{c_1}X}|X|. \eqno(2)$$ 
Since $(|X|,|X|-2)\leq 2$, we conclude that $|X|=2$ or $|T_{c_1}|=\frac{l}{2}|X|$, where $l\geq 1$ is an integer. Moreover, $l\neq 1$ because $|T_{c_1}|\equiv |X|\equiv 2\mod 3$. Every element from $T_{c_1}$ enters the element $\underline{X}^2$ with coefficient $|X|-2$ because $|c_1X\cap X|=|X|-2$; every element from $X$ enters $\underline{X}^2$ because $|qX\cap X|\geq 1$; the identity element $e$ enters $\underline{X}^2$ with coefficient $|X|$ because $X=X^{-1}$. Assuming that $|X|>2$ and $|T_{c_1}|>\frac{3}{2}|X|$,  we conclude that at least $|X|^2-\frac{3}{2}|X|$ elements of $D$ (counted with multiplicities) enter $\underline{X}^2$. However, exactly $|X|^2$ elements enter $\underline{X}^2$ and $|X|^2<\frac{3}{2}|X|^2-|X|$ if $|X|>2$, a contradiction. Hence $|X|=2$ or $|T_{c_1}|=|X|$, and

$$\underline{X}^2=|X|e+(|X|-2)\underline{T_{c_1}}+\underline{X}. \eqno(3)$$ 
 
If for $Y\in I\setminus \{T_{c_1}\}$ we have $Y\cap E=\{q,q^2,f,f^2\}$ then $|c_1Y\cap Y|=|Y|-2$. Therefore Lemma~\ref{eq} implies that $(2)$ holds for $Y$. Thus $|T_{c_1}|=\frac{l}{2}|Y|$, where $l\geq 1$ is an integer. Note that $l<3$ because otherwise at  least $\frac{3}{2}|Y|^2-|Y|$ elements enter the element $\underline{Y}^2$ that is impossible. Moreover, $l\neq 2$, since $|T_{c_1}|\equiv 2\mod 3,~ |Y|\equiv 1\mod 3$. Thus $|Y|=2|T_{c_1}|$.

Let $3^k=max |t|,~t\in T_{c_1}$, $D_k=\{g\in D: |g|\leq 3^k\}$. Suppose that $X$ contains $l$ elements $x_1,\ldots,x_l$ of order greater than $3^k$. Then  at least $2l$ elements $qx_1,\ldots,qx_l,q^2x_1,\ldots,q^2x_l$ of order greater than $3^k$ enter the element  $\underline{X}~\underline{X}$. On the other hand, due to $(3)$ exactly $l$ elements, namely $x_1,\ldots,x_l$ of order greater than $3^k$ enter the element  $\underline{X}~\underline{X}$, a contradiction. Thus $X$ does not contain elements of order greater than $3^k$ and 

$$T_{c_1}\cup X\cup T_{c_1}X = D_k\setminus\{e\}. \eqno(4)$$ 

Suppose that $I\setminus \{T_{c_1}\}=\{X,Y,Z\}$ with $|X\cap E|=|Y\cap E|=|Z\cap E|=2$ and $|X|>2,~|Y|>2,~|Z|>2$. Then $|X|=|Y|=|Z|=|T_{c_1}|=m$. Therefore $|c_1X\cap X|=|qX\cap T_{c_1}|=m-2$ and $\underline{T_{c_1}}\underline{X}=(m-2)\underline{X}+\underline{Y}+\underline{Z}$. Due to $(4)$, we have $D_k\setminus\{e\}=T_{c_1}\cup X\cup Y \cup Z$. However, $3m+2=|D_k\setminus\{e\}|=|T_{c_1}|+|X|+|Y|+|Z|=4m$. Since $m>2$, we have a contradiction. Thus in this case at least one of the sets $X,~Y,~Z$ has cardinality $2$. 

Suppose that $I\setminus \{T_{c_1}\}=\{X,Y\}$ with $|X\cap E|=2,~|Y\cap E|=4$ and $|X|>2$. Then $|X|=|T_{c_1}|=m,~|Y|=2|T_{c_1}|=2m$, and $\underline{T_{c_1}}\underline{X}=(m-2)\underline{X}+\underline{Y}$. Again $(4)$ implies that $D_k\setminus\{e\}=T_{c_1}\cup X\cup Y \cup Z$ and we have a contradiction. Therefore in all cases there exists a basic set $X\in I\setminus \{T_{c_1}\}$ that has form $\{q,q^2\},~q\in E$. 

Let us show that $\{q,q^2\}T_{c_1}$ is a basic set. If the elements $qc_1,~q^2c_1^2,~q^2c_1,~qc_1^2$ lie in one basic set~$Y$ then $|Y|=2|T_{c_1}|$ and $Y=\{q,q^2\}T_{c_1}$. Otherwise there are  two basic sets 

$$Y=qA \cup q^2(T_{c_1} \setminus A),~Z=q(T_{c_1}\setminus A) \cup q^2A,$$ 
where $A\subseteq T_{c_1},~T_{c_1}\setminus A=A^{-1}$. Then Lemma \ref{oarg} implies that $c_1\in \rad(A\setminus C_1),~c_1\in \rad(A^{-1}\setminus C_1)$. Suppose that $yz=c_1,~y,z\in A$ and $|y|=|z|>3$. Then $z=c_1y^{-1}\in A^{-1}$, a contradiction. Thus the element $c_1$ enters the element  $\underline{Y}~\underline{Z}=(q+q^2)\underline{A}~\underline{A}^{-1}+\underline{A}^2+(\underline{A}^{-1})^2$  with coefficient $1$. In the other hand,  there exist elements from $T_{c_1}$ that enter the element $\underline{Y}~\underline{Z}$ with coefficient at least $2$, a contradiction. Therefore $X=\{q,q^2\}T_{c_1}$ is a basic set of $\mathcal{A}$ and Statement $(1)$ of Lemma \ref{Ts} holds.
\end{proof}

\begin{proof}[Proof of  Lemma \ref{Ts}]
If $\rad(T)>e$ for some $T\in I\setminus \{T_{c_1}\}$ then $C_1\leq \rad(T)$ for every $T\in I\setminus \{T_{c_1}\}$ and Statement $(5)$ holds. Thus  due to Lemma \ref{sset} we may assume that there exists nonrational $T\in I\setminus \{T_{c_1}\}$ such that $\rad(T)=\{e\}$. Then $T$  contains one, two or three elements of order $3$. Suppose that $T$ contains exactly one element $q$ of order $3$. Then $C^{T_{c_1}}_{TT^{-1}}=|c_1T\cap T|=|T|-1$. This is obvious if $T$ is regular and follows from Lemma \ref{oarg} otherwise. Lemma \ref{eq} yields that
$$|T_{c_1}|(|T|-1)=C^{T_{c_1}}_{TT^{-1}}|T_{c_1}|=C^{T}_{T_{c_1}T}|T|.$$
Thus  $|T|$ divides $(|T|-1)|T_{c_1}|$. If $|T_{c_1}|=l|T|$ where $l>1$, then at least $|T|+l|T|(|T|-1)$ elements enter the element $\underline{T}~\underline{T}^{-1}$. We have a contradiction because $|T|+l|T|(|T|-1)>|T|^2$. So either $|T|=1$ or  $|T_{c_1}|=|T|$. In the first case $T=\{q\}$ and  Statement $(4)$ of the lemma holds.

In the second case we have that $|T_{c_1}|=|T|\equiv 1~\mod~3$  by Lemma \ref{oarg}. It follows that  $T_{c_1}$ contains four elements $c_1,~c_1^2,~qc_1~,q^2c_1^2$ of order $3$. Suppose that $c_1=tx,~t\in T_{c_1},~x\in T$. If $|t|>|x|\geq 3$ then $|tx|=|t|>3$. The same is true if $|x|>|t|\geq 3$. So $|x|=|t|$. If $|t|>3$ then   $x=c_1t^{-1}\in T_{c_1}$ by Lemma~\ref{oarg}. Thus $|x|=|t|=3$. Then $c_1^2$ enters the element $\underline{T}\underline{T_{c_1}}$ whereas $c_1$ does not, a contradiction with rationality of $T_{c_1}$.

Suppose that $T$ contains exactly three elements of order $3$. We consider the case when $s,~sc_1,~s^2c_1\in T,~s^2,~s^2c_1^2,~sc_1^2\in T^{-1}$. In other cases such that $|T\cap E|=3$ the arguments are similar. Note that $|c_1T\cap T|=|T|-2$ and $(|T|-2)|T_{c_1}|$ is divisible by  $|T|$. Lemma \ref{oarg} implies that $|T|$ is divisible by $3$. However, $(|T|-2)|T_{c_1}|$ is not divisible by $3$ because $T_{c_1}$ contains two elements of order  $3$, a contradiction.

Suppose that $T$ contains exactly two elements of order $3$. 

\textbf{Case 1.} Let $s,~s^2c_1^2\in T,~s^2,~sc_1\in T^{-1}$. Denote the basic set containing $sc_1^2$ by $X$. Note that $c_1$ and $c_1^2$ appear in $\underline{X}~\underline{T}$ only as the product of two elements of order $3$ because otherwise $X\cap T^{-1}\neq \varnothing$ by Lemma \ref{oarg}. So $c_1$ enters the element $\underline{X}~\underline{T}$ whereas $c_1^2$ does not, a contradiction with $T_{c_1}=T_{c_1^2}$.  

\textbf{Case 2.} Let $s,~sc_1\in T,~s^2,~s^2c_1^2\in T^{-1}$. Then  $s^2c_1\in X$ since otherwise $X$ contains exactly one element of order $3$, hence, $|X|=1$, and Statement $(4)$ of the lemma holds. The element $sc_1^2$ enters the element $\underline{T_{c_1}}~\underline{T}$ as the product of $s$ and $c_1^2$. Since $sc_1^2,s^2c_1\in X$, the element $s^2c_1$ also enters $\underline{T_{c_1}}~\underline{T}$. Thus  there are elements  $r\in T_{c_1}$ and $t\in T$ such that $|r|>3,~|t|>3$, and $rt=c_1s^2$. From Lemma~\ref{oarg} it follows that $r^{-1}c_1\in T_{c_1}$. By Lemma~\ref{Tc1}, either $T_{c_1}\cup\{e\}\leq D$ or $T_{c_1}\cup\{e,~q,~q^2\}\leq D,~|q|=3$.  The latter case is impossible because then $T_{c_1}=T$.  So $K=T_{c_1}\cup\{e\}$ is an $\mathcal{A}$-subgroup. We have that $|sK\cap T|\equiv 1~\mod~3$ and $|s^2K\cap T|\equiv 0~\mod~3$. Thus  Lemma \ref{intersection} yields that  $s^2K\cap T=\varnothing$. Otherwise $t=r^{-1}c_1s^2\in s^2K\cap T$, a contradiction. 

Other cases such that $|T\cap E|=2$ are similar to Case $1$ or to Case $2$. We conclude that $T$ does not contain exactly two elements of order $3$. 
\end{proof}

\begin{corl} \label{R}
Let $3^k=max |t|,~t\in T_{c_1}$, $D_k=\{g\in D: |g|\leq 3^k\}$, $T_{c_1}$ be nonregular, rational, and $R$ be the union of all basic sets containing elements of order $3$. Then $(D_k\setminus \{e\})\subseteq R$. Moreover, if $\rad(T)=\{e\}$ for every $T\in I\setminus \{T_{c_1}\}$ then $R=D_k\setminus \{e\}$.
\end{corl}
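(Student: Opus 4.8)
The plan is to apply Lemma \ref{Ts}: since $T_{c_1}$ is rational and nonregular, one of its five cases $(1)$--$(5)$ holds, and I would verify $D_k\setminus\{e\}\subseteq R$ in each, obtaining equality in cases $(1)$--$(4)$. Throughout I will use two elementary facts about the subgroup lattice of $D=\mathbb{Z}_3\times\mathbb{Z}_{3^n}$: the only subgroup of exponent $3^k$ containing $E$ is $D_k$, and if $K$ is cyclic of order $3^k$ with $c_1\in K$ and $q\in E\setminus C_1$, then $q\notin K$ and $\langle q,K\rangle=D_k$ (both follow since any subgroup containing $E$ is of the form $D_m$).

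In cases $(1)$--$(4)$ the set $I$ is listed explicitly and $T_{c_1}\cup\{e\}$ (cases $(1),(3),(4)$) or $T_{c_1}\cup\{e,q,q^2\}$ (case $(2)$) is an $\mathcal{A}$-subgroup $K$, and I would compute $R$ as a union of $K$-cosets. In case $(3)$, $I=\{T_{c_1}\}$ forces every order-$3$ element into $T_{c_1}$, so $E\subseteq\langle T_{c_1}\rangle$, whence $\langle T_{c_1}\rangle=D_k$ and $R=T_{c_1}=D_k\setminus\{e\}$. In case $(2)$, $K$ contains $q$ and $c_1$, hence $E$, so $K=D_k$ and $R=K\setminus\{e\}=D_k\setminus\{e\}$. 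In cases $(1)$ and $(4)$, $K$ is cyclic and a direct coset computation gives $R=(K\setminus\{e\})\cup qK\cup q^2K=\langle q,K\rangle\setminus\{e\}=D_k\setminus\{e\}$. Thus equality holds in all four cases.

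The substantive case is $(5)$, where $C_1\le\rad(T)$ for every $T\in I\setminus\{T_{c_1}\}$. Put $K=\langle T_{c_1}\rangle$, an $\mathcal{A}$-subgroup with $T_{c_1}=K\setminus\rad(T_{c_1})$ by Lemma \ref{Tc1}; note $K$ has exponent $3^k$. If $K$ is noncyclic then $E\le K$, so $K=D_k$ and $T_{c_1}=D_k\setminus\rad(T_{c_1})$ with $\rad(T_{c_1})\setminus\{e\}\subseteq E\setminus C_1\subseteq R$, giving $D_k\setminus\{e\}\subseteq R$ immediately. If $K$ is cyclic, then $\rad(T_{c_1})=\{e\}$ and $T_{c_1}=K\setminus\{e\}$; here lies the main difficulty, since a priori the order-$>3$ elements of the cosets $sK,s^2K$ could sit in basic sets containing no element of order $3$. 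The key step is a coefficient argument: for $T\in I\setminus\{T_{c_1}\}$ the coefficient of $w$ in $\underline{T}\,\underline{T}^{-1}$ equals $|T\cap wT|$, which is $|T|$ at $w=c_1$ because $c_1\in C_1\le\rad(T)$; as $c_1$ belongs to the single basic set $T_{c_1}=K\setminus\{e\}$, this coefficient is $|T|$ for every $w\in K\setminus\{e\}$, forcing $wT=T$ and hence $K\le\rad(T)$. Therefore each such $T$ is a union of $K$-cosets, so the basic set containing $q\in E\setminus C_1$ contains all of $qK$; the cosets $sK$ and $s^2K$ are thereby covered, and $R\supseteq(K\setminus\{e\})\cup sK\cup s^2K=\langle s,K\rangle\setminus\{e\}=D_k\setminus\{e\}$.

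For the ``moreover'' assertion, the hypothesis $\rad(T)=\{e\}$ for all $T\in I\setminus\{T_{c_1}\}$ rules out case $(5)$ (unless $I=\{T_{c_1}\}$, which is case $(3)$), leaving only cases $(1)$--$(4)$, where equality $R=D_k\setminus\{e\}$ was already shown. I expect the cyclic sub-case of $(5)$ to be the main obstacle: it is precisely the scheme-consistency forced by the product $\underline{T}\,\underline{T}^{-1}$ that prevents the full basic set $K\setminus\{e\}$ from coexisting with transversal basic sets that fail to be $K$-invariant.
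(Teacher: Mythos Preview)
Your proof is correct and follows essentially the same route as the paper: case-split via Lemma~\ref{Ts}, handle $(1)$--$(4)$ directly, and in case $(5)$ show $K\le\rad(T)$ for each $T\in I\setminus\{T_{c_1}\}$. The paper's case $(5)$ argument is even shorter than your coefficient computation on $\underline{T}\,\underline{T}^{-1}$: since $\rad(T)$ is an $\mathcal{A}$-subgroup (Lemma~\ref{radbasic}) containing $c_1$, it must contain the whole basic set $T_{c_1}=K\setminus\{e\}$, hence $K$ itself; your separate noncyclic subcase is in fact vacuous in $(5)$, since a noncyclic $K$ would force either $I=\{T_{c_1}\}$ (case $(3)$) or a trivial-radical basic set inside $\rad(T_{c_1})\setminus\{e\}$, contradicting $(5)$.
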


\begin{proof}
The statement of the corollary is clear if one of  Statements $(1)-(4)$ of Lemma \ref{Ts} holds. Suppose that  Statement $(5)$ of Lemma \ref{Ts} holds. Then $K=T_{c_1}\cup \{e\}$ is a cyclic group. Let $T\in I\setminus \{T_{c_1}\}$ and $q\in T\cap E$. Since $\rad(T)$ is an $\mathcal{A}$-subgroup, we conclude that $T_{c_1}=K\setminus \{e\}\subseteq \rad(T)$. Thus $qK\subseteq T$, $q^2K\subseteq T^{-1}$, and $R=T_{c_1}\cup T\cup T^{-1}\supseteq T_{c_1}\cup sK\cup s^2K=D_k\setminus \{e\}$.
\end{proof}

\section{$S$-rings over $D=\mathbb{Z}_3\times \mathbb{Z}_{3^n}$: nonregular case }

The purpose of this section is to describe nonregular $S$-rings with trivial radical over $D$. The main result  can be formulated as follows.

\begin{theo} \label{nonreg}
Let $\mathcal{A}$ be an $S$-ring over $D$. Suppose that $\rad(\mathcal{A})=e$. Then one of the following statements holds:

$(1)$ $\mathcal{A}$ is regular;

$(2)$ $\mathcal{A}=\mathcal{A}_H\otimes\mathcal{A}_L$, where $rk(\mathcal{A}_H)=2$ and $|L|\leq 3 \leq |H|$. In particular,  $\mathcal{A}$ is schurian.

\end{theo}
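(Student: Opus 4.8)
The plan is to assume $\mathcal{A}$ is not regular (otherwise statement $(1)$ holds) and to manufacture the tensor decomposition; throughout I take $n\geq 2$, as in the running setup. First I dispose of the primitive case: if $\mathcal{A}$ has no nontrivial proper $\mathcal{A}$-subgroup, then Lemma \ref{bgroups} (applied to $D=\mathbb{Z}_{3^n}\times\mathbb{Z}_3$) gives $\rk(\mathcal{A})=2$, so $(2)$ holds trivially with $H=D$ and $L=e$. Hence I may assume $\mathcal{A}$ is imprimitive, and the real target becomes: find a cyclic $\mathcal{A}$-subgroup $H$ of order $3^n$ with $\rk(\mathcal{A}_H)=2$, together with an $\mathcal{A}$-subgroup $L$ of order $3$ complementing it; the product structure and schurity will then follow.

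To locate $H$ I analyse $T_{c_1}$ via Lemma \ref{Tc1}; the central claim is that nonregularity of $\mathcal{A}$ forces $T_{c_1}$ to be nonregular and highest. Pick a nonregular highest basic set $T$; as it contains an element of order $3^n$, its $\mathcal{A}$-subgroup $\langle T\rangle$ (Lemma \ref{radbasic}) is either cyclic of order $3^n$ or all of $D$. In the cyclic case $\mathcal{A}_{\langle T\rangle}$ is an $S$-ring over a cyclic $3$-group, so Lemma \ref{cyclering} with $\rad(T)=e$ yields $T=\langle T\rangle\setminus\{e\}$; since every cyclic subgroup of order $3^n$ contains $c_1$, we get $T_{c_1}=\langle T\rangle\setminus\{e\}$, nonregular and highest, and may set $H=\langle T\rangle$ at once. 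Granting the claim in general, $T_{c_1}$ is rational by Lemma \ref{regTc1}, so Lemma \ref{Ts} applies; using $\rad(\mathcal{A})=e$, imprimitivity, and highest-ness of $T_{c_1}$ to exclude statements $(2),(3),(5)$, I land in statement $(1)$ or $(4)$, where $K=T_{c_1}\cup\{e\}$ is a cyclic $\mathcal{A}$-subgroup of order $3^n$ with $K\setminus\{e\}=T_{c_1}$ a single basic set; thus $\rk(\mathcal{A}_K)=2$ and I take $H=K$.

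Granting $H$ of order $3^n$ with $\rk(\mathcal{A}_H)=2$, the factorization is essentially free. Here $3^k=\max\{|t|:t\in T_{c_1}\}=3^n$, so $D_k=D$, and Corollary \ref{R} shows that the union $R$ of all basic sets meeting $E\setminus\{e\}$ equals $D\setminus\{e\}$; hence every non-identity basic set already contains an element of order $3$ and is listed by Lemma \ref{Ts}. In case $(1)$ these are exactly $T_{c_1}=H\setminus\{e\}$, $L\setminus\{e\}$ and $(H\setminus\{e\})(L\setminus\{e\})$ for the order-$3$ complement $L=\langle q\rangle$ (and in case $(4)$ their rank-$3$ refinement), which are precisely the non-identity basic sets of $\mathcal{A}_H\otimes\mathcal{A}_L$ with $\rk(\mathcal{A}_H)=2$; a cardinality count confirms they partition $D\setminus\{e\}$. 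Therefore $\mathcal{A}=\mathcal{A}_H\otimes\mathcal{A}_L$ with $|L|=3\leq|H|$. Since $\mathcal{A}_H$ has rank $2$ and $\mathcal{A}_L$ is an $S$-ring over a group of order at most $3$, both are schurian, and Lemma \ref{schurtens} gives that $\mathcal{A}$ is schurian, completing $(2)$.

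The main obstacle is precisely the step asserted in the second paragraph: ruling out the alternative $\langle T\rangle=D$ for the nonregular highest basic set $T$ (equivalently, showing $T_{c_1}$ is nonregular and highest, not regular or of small exponent). In that case $T$ involves the $s$-direction and Lemma \ref{cyclering} no longer applies. I expect to attack it with the separating-group technique of Theorem \ref{separat}: iterating Lemma \ref{oarg} shows $C_1\leq\rad(T\setminus D_m)$ for $3^m=\min\{|t|:t\in T\}$, and the difficulty is upgrading this to a hypothesis $H'\leq\rad(T\setminus H')$ with $H'\in\{C_1,E\}$ so that $T=\langle T\rangle\setminus\rad(T)$; combined with $\rad(\mathcal{A})=e$ and imprimitivity (which forbids $T=D\setminus\{e\}$), this should force $\langle T\rangle$ to be cyclic after all, or else produce the required $\mathcal{A}$-subgroup directly, possibly by descending through the power operation $T^{[3]}$ of Theorem \ref{sch} and the multiplier operation of Theorem \ref{burn}. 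Controlling this descent, rather than the final tensor factorization, is where the genuine work lies.
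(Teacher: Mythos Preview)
Your overall architecture matches the paper's: reduce to showing that $T_{c_1}$ is nonregular and highest, then invoke Lemma~\ref{Ts} to land in case~$(1)$ or~$(4)$ and read off the tensor factorization. The handling of the cyclic case $\langle T\rangle\cong\mathbb{Z}_{3^n}$ via Lemma~\ref{cyclering}, the exclusion of cases~$(2),(3),(5)$ of Lemma~\ref{Ts} using highest-ness and $\rad(\mathcal{A})=e$, and the final assembly of $\mathcal{A}=\mathcal{A}_H\otimes\mathcal{A}_L$ are all correct and essentially what the paper does in Proposition~\ref{nonreghigh}.

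The genuine gap is exactly where you locate it, but your proposed attack does not close it. To apply Theorem~\ref{separat} with separating group $H'\in\{C_1,E\}$ you need $T\cap H'\neq\varnothing$, i.e.\ $T$ must already contain an element of order~$3$. If the minimal order $3^m$ in $T$ satisfies $m\geq 2$ this simply fails, and your ``upgrade'' from $C_1\leq\rad(T\setminus D_m)$ to $C_1\leq\rad(T\setminus C_1)$ is equivalent to $T_m\subseteq C_1$, which has no reason to hold. So the separating-group route presupposes the very fact it is meant to establish; the vague fallback through $T^{[3]}$ does not help either, since $T^{[3]}$ loses the nonregularity you need to exploit.

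The paper fills this gap with two auxiliary lemmas proved between the statement of Theorem~\ref{nonreg} and its proof. The easy one, Lemma~\ref{nonregc1}, shows that if $C_1$ is an $\mathcal{A}$-subgroup then $C_1\leq\rad(T)$ for every nonregular $T$; with $\rad(X)=e$ this rules out $C_1$ (and then $E$) as $\mathcal{A}$-subgroups, so Lemma~\ref{Tc1} forces $T_{c_1}$ to be nonregular. The substantial one, Lemma~\ref{nonregset}, proves directly that a nonregular basic set $T$ with $\rad(T)=e$ and $T\cap E=\varnothing$ cannot exist. Its proof does not go through Theorem~\ref{separat}: it compares $|T_m|$ and $|T\setminus T_m|$ via an orbit-counting inequality for the setwise stabilizer of $T$ in $\{\sigma_r:r\in\mathbb{Z}_{3^n}^*\}$ (Proposition~\ref{orbits}), uses Lemma~\ref{intersection} to pin down $\lambda=|tH\cap T|$ for $H=\langle T_{c_1}\rangle$, bounds $|T|$ against $|T_{c_1}|$ through the structure-constant identity of Lemma~\ref{eq}, and finishes by passing to the circulant quotient $D/E$ and invoking Lemma~\ref{cyclering} to reach a numerical contradiction. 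This counting argument, not a separating-group trick, is where the real work lies.
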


The proof of Theorem \ref{nonreg} will be given in the end of the section.

\begin{lemm} \label{nonregc1}
Let $T$ be a nonregular basic set  of an $S$-ring $\mathcal{A}$ over $D$. Suppose that $C_1$ is an $\mathcal{A}$-subgroup. Then $C_1\leq \rad(T)$.
\end{lemm}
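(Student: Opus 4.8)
The plan is to show that $T$ is a union of cosets of $C_1$, which for the abelian group $D$ is equivalent to $C_1\leq\rad(T)$. The two ingredients are Lemma~\ref{oarg}, which forces a full $C_1$-coset into $T$ as soon as $T$ is nonregular, and Lemma~\ref{intersection}, which propagates this to every coset of $C_1$ meeting $T$. First I would record that $e\notin T$: since the basic sets partition $D$ and $\{e\}$ is itself a basic set, the singleton $\{e\}$ is regular, so a nonregular $T$ cannot equal $\{e\}$ and hence misses $e$. Consequently every element of $T$ has order at least $3$, and by nonregularity $T$ contains two elements of distinct orders, so we may choose $a,b\in T$ with $|a|>|b|\geq 3$.

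Next I would apply Lemma~\ref{oarg} to this pair to obtain $a\{1,c_1,c_1^2\}=aC_1\subseteq T$. In particular the coset $C_1 a$ lies entirely in $T$, so $|T\cap C_1 a|=|C_1|=3$. Since $C_1$ is assumed to be an $\mathcal{A}$-subgroup and $T$ is a basic set, Lemma~\ref{intersection} guarantees that $|T\cap C_1 g|$ is independent of $g$ among the cosets meeting $T$; by the previous step this common value equals $3=|C_1|$. Hence each coset $C_1 g$ that meets $T$ is contained in $T$, so $T$ is a union of $C_1$-cosets and $C_1\leq\rad(T)$.

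The argument is short and encounters no substantial obstacle; the only point requiring a word of care is the opening observation that a nonregular basic set avoids $e$, since this is exactly what upgrades the lower element $b$ to order at least $3$ and thereby makes the hypothesis $|a|>|b|\geq 3$ of Lemma~\ref{oarg} available. Everything else is a direct combination of the coset-count invariance of Lemma~\ref{intersection} with the rational-multiplier conclusion already packaged in Lemma~\ref{oarg}.
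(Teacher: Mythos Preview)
Your argument is correct and is in fact shorter than the paper's proof. The paper proceeds differently: it sets $3^m=\min\{|t|:t\in T\}$, uses Lemma~\ref{oarg} to see that $C_1\le\rad(T\setminus T_m)$, and then, assuming some $t\in T_m$ with $tc_1\notin T$, passes to the quotient $\pi:D\to D/C_1$ to get $\pi(T)=\pi(T_{tc_1})$; it then argues that some $y\in T\setminus T_m$ would have $yc_1$ or $yc_1^2$ in $T_{tc_1}$, contradicting $yC_1\subseteq T$. Your route replaces this quotient-and-contradiction step by a single appeal to Lemma~\ref{intersection}: once one $C_1$-coset is entirely inside $T$, the constancy of $|T\cap C_1g|$ forces all cosets meeting $T$ to be contained in $T$. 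This is cleaner and makes the dependence on the hypothesis that $C_1$ is an $\mathcal{A}$-subgroup transparent (it is exactly what Lemma~\ref{intersection} needs). The paper's approach, on the other hand, illustrates the quotient technique that recurs later in the section, so it is perhaps chosen for uniformity of method rather than brevity.
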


\begin{proof}
Let $3^m=min |t|,~t\in T$. Lemma \ref{oarg} implies that $C_1 \leq \rad(T\setminus T_m)$. Suppose that there exists  $t\in T_m$ such that $tc_1\notin T$. Then $X=T_{tc_1}$ is other than $T$. Let $\pi:D\rightarrow D/C_1$ be the quotient epimorphism. The sets $\pi(T)$ and $\pi(X)$ are basic sets of $\mathcal{A}_{D/C_1}$ and $\pi(t)\in\pi(T)\cap\pi(X)$. Therefore $\pi(T)=\pi(X)$. So there is an element $y\in T\setminus T_m$ such that either $yc_1 \in X$ or $yc_1^2 \in X$. However, $yc_1,~yc_1^2\in T$ by Lemma \ref{oarg}, a contradiction. Thus for every $t\in T_m$ we have $tc_1\in T$ and $C_1\leq \rad(T)$.
\end{proof}

The key point of the proof of Theorem \ref{nonreg} is the following statement.

\begin{lemm} \label{nonregset}
Let $T$ be a nonregular basic set  of an $S$-ring $\mathcal{A}$ over $D$. Suppose that $T$ does not contain  elements of order $3$. Then $\rad(T)>e$.
\end{lemm}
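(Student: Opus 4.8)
The plan is to argue by contradiction: assume $\rad(T)=e$ and show that in fact $c_1\in\rad(T)$, which is the desired contradiction. First I would fix $3^m=\min\{|t|:t\in T\}$ and record the two structural consequences of the hypotheses. Since $T$ contains no element of order $3$ we have $m\geq 2$, and since $T$ is nonregular the higher layer $T\setminus T_m$ is nonempty and consists of elements of order strictly greater than $3^m$ (so $m\leq n-1$). Applying Lemma~\ref{oarg} to any element of $T\setminus T_m$ together with a minimal element of $T_m$ (whose order is $\geq 9>3$) gives $C_1\leq\rad(T\setminus T_m)$; hence the higher layer is a union of $C_1$-cosets and in particular $3\mid|T\setminus T_m|$. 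On the other hand, under the standing assumption $\rad(T)=e$ Lemma~\ref{lowset} pins the minimal layer $T_m$ down to a very small configuration: a union of at most three blocks, each of which is either a singleton or a pair $\{x,x^{-1}\}$. Thus $T$ is the disjoint union of a \emph{coarse} layer $T\setminus T_m$ (saturated by $C_1$) and a \emph{fine} layer $T_m$ (of size at most $6$ and $C_1$-asymmetric), and the whole difficulty is that these two descriptions must coexist inside one basic set.

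To exploit this I would run a structure-constant computation in the spirit of the proofs of Lemma~\ref{regTc1} and Lemma~\ref{sset}. For any minimal $t_0\in T_m$ the product $c_1t_0$ again has order $3^m$, so if $c_1t_0\notin T$ then the basic set $X=T_{c_1t_0}$ is distinct from $T$ and meets the same order level. Writing the coefficient of $c_1$ in $\underline{T}\,\underline{T}^{-1}$ as $|c_1T\cap T|=|c_1T_m\cap T_m|+|T\setminus T_m|$, the second summand is a multiple of $3$ because the higher layer is $C_1$-saturated, whereas Lemma~\ref{lowset} forces $|c_1T_m\cap T_m|$ (equivalently the failure set measured by $|T_m|-|c_1T_m\cap T_m|$) to be non-divisible by $3$. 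I would transport this congruence imbalance through Lemma~\ref{eq}, comparing the structure constants linking $T$, $X$, and the order-$3$ basic set $W=T_{c_1}$: because the coefficient function of $\underline{T}\,\underline{T}^{-1}$ is constant on $W$ while $|T\cap Wt|$ is constant on $T$, the two evaluations are rigidly tied, and the coarse/fine split makes them arithmetically incompatible. A cleaner parallel route, worth carrying alongside, is to first settle the dichotomy of Lemma~\ref{Tc1}: if $C_1$ turns out to be an $\mathcal{A}$-subgroup then Lemma~\ref{nonregc1} immediately yields $C_1\leq\rad(T)>e$, and if $E$ is an $\mathcal{A}$-subgroup one may instead pass to the cyclic quotient $D/E$ and use the intersection-regularity of Lemma~\ref{intersection} (each $E$-coset meeting a layer meets it in a number of points independent of the coset) to force $C_1\leq\rad(T)$.

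The hard part will be the coupling between the two layers, i.e.\ making the above congruence argument genuinely close across \emph{all} configurations of $T_m$ allowed by Lemma~\ref{lowset}. The delicate points are (i) verifying that in every configuration the minimal-layer intersection number $|c_1T_m\cap T_m|$ really is $\not\equiv |T_m|\pmod 3$, so that $c_1T_m\neq T_m$ cannot persist once the higher layer is present, and (ii) handling the branch of Lemma~\ref{Tc1} in which $T_{c_1}$ is rational and nonregular, where neither $C_1$ nor $E$ is handed to us as an $\mathcal{A}$-subgroup and one must instead locate the relevant $\mathcal{A}$-subgroup through the options of Lemma~\ref{Ts} (the cyclic group $T_{c_1}\cup\{e\}$ or the group $T_{c_1}\cup\{e,q,q^2\}$) before invoking Lemma~\ref{nonregc1}. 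I expect (ii) to be the genuine obstacle: the reduction to a usable $\mathcal{A}$-subgroup, rather than the counting itself, is what requires care.
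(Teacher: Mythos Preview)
Your initial reductions are correct and match the paper: the contradiction hypothesis, the fact that $C_1\leq\rad(T\setminus T_m)$ via Lemma~\ref{oarg}, and the bound $|T_m|\leq 6$ via Lemma~\ref{lowset}. You also correctly note that if $C_1$ were an $\mathcal{A}$-subgroup then Lemma~\ref{nonregc1} finishes immediately, so the dichotomy of Lemma~\ref{Tc1} leaves the two cases ``$T_{c_1}$ rational nonregular'' and ``$E$ an $\mathcal{A}$-subgroup''. After this point, however, your plan diverges from the paper and has real gaps.

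The congruence idea does not close. Your claim that $|T_m|-|c_1T_m\cap T_m|$ is forced to be $\not\equiv 0\pmod 3$ by Lemma~\ref{lowset} is false: for instance $T_m=\{x,sy,s^2z\}$ with $c_1T_m\cap T_m=\varnothing$ gives the difference $3$. The paper never uses a mod-$3$ argument here. More seriously, your Route~2 misfires in the nonregular-$T_{c_1}$ branch: the $\mathcal{A}$-subgroups produced by Lemma~\ref{Ts} are \emph{not} $C_1$, so Lemma~\ref{nonregc1} does not apply to them, and you cannot conclude $C_1\leq\rad(T)$ that way. The paper handles this branch by an entirely different mechanism: it sets $H=\langle T_{c_1}\rangle$, proves the structure-constant identity
\[
C^{T}_{TT_{c_1}}\,|T|\;=\;(|T|-\alpha)\,|T_{c_1}|,\qquad \alpha=|T_m|-|c_1T_m\cap T_m|,
\]
and combines it with a purely group-theoretic inequality you do not have, namely $|T\setminus T_m|\geq |T_m|$ (proved by comparing $M$-orbit lengths, where $M=K_{\{T\}}$ is the multiplier stabiliser of $T$). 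This inequality is the engine of the proof: it turns the identity above into the bound $|T_{c_1}|\leq 2(\lambda-1)\leq 10$, which forces $|H|=9$; a short $|tH\cap T|$ argument then excludes $H$ cyclic, whence $H=E$ and $T_{c_1}$ is regular after all. Only \emph{after} this reduction does the paper run the $\lambda=|tE\cap T|$ and $D/E$-quotient analysis you sketched; and even there one needs the orbit inequality again (with equality) to pin $|T|$ to $\{6,12\}$ before Lemma~\ref{cyclering} on the cyclic quotient gives the contradiction. In short, the missing idea is the orbit-length comparison $|T\setminus T_m|\geq |T_m|$ and its use to first prove $T_{c_1}$ regular; without it neither the nonregular-$T_{c_1}$ branch nor the numerical endgame can be completed.
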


\begin{proof}
Assume the contrary. Let $3^m=min |t|,~t\in T$. Then $m>1$, $c_1\notin \rad(T_m)$, $T_m\neq \varnothing$, and $T \setminus T_m\neq \varnothing$. From Lemma \ref{lowset} it follows that $|T_m|\leq 6$. By Lemma \ref{nonregc1}, the group $C_1$ is not an $\mathcal{A}$-subgroup. Lemma \ref{Tc1} yields that either $T_{c_1}$ is rational and nonregular or  $E$ is an $\mathcal{A}$-subgroup. Let us define  groups $K$ and $M$ as in the proof of Lemma \ref{lowset}: 

$$K=\{\sigma_m:m\in \mathbb{Z}^{*}_{3^n}\},~\sigma_m: x\rightarrow x^m,~M=K_{\{T\}}.$$ 

\begin{prop}\label{orbits}
In the above notation $|T\setminus T_m|\geq |T_m|$, and the equality holds if and only if

$(1)$ $T_m$ is a union of three  $M$-orbits;

$(2)$ $T\setminus T_m$ is an $M$-orbit;

$(3)$ any element in $T\setminus T_m$ is of order $3^{m+1}$.

\end{prop}

\begin{proof}
Note that $M=\left(\mathbb{Z}^{*}_{3^n}\right)_{\left\{T_m\right\}}$. For every $t\in T$ we have $K_t\leq M$ since $T$ is a basic set of $\mathcal{A}$.  Therefore

$$M_t=K_t=\{1+|t|k: k = 0,\ldots,\frac{3^n}{|t|}-1\}.$$
Thus $|M_t|=\frac{3^n}{|t|}$. It follows that if $|t_1|=|t_2|$ then $|t_1M|=|t_2M|$. Let $x\in T_m$. Then  $|T_m|=|T_m/M||xM|$. Let $z\in T\setminus T_m,~ |z|>|x|$. Then 
$$|M_z|=|M_x|\frac{|x|}{|z|}.$$
Taking into account that $T_m$ is a disjoint union of at most three $M$-orbits, we obtain
that $|M_z|\leq \frac{|M_x|}{3}$ and 

$$|T|-|T_m|\geq |zM|\geq 3|xM|\geq|T_m/M||xM|=|T_m|.$$
as required. Since the equality holds only if the second and third inequalities in the above formula are equalities, we are done.
\end{proof}

Let $H=\langle T_{c_1} \rangle$ be a group of exponent  $3^k$ and let $R$ be the union of all basic sets containing elements of order $3$. Then $R$ is a rational $\mathcal{A}$-set and $R\cap T=\varnothing$. Moreover, $k<m$: if $T_{c_1}$ is rational and nonregular then  this   follows  from Corollary \ref{R}; otherwise  $T_{c_1}\subseteq E$ and, hence, $k=1<m$. So $|tk|=|t|$ for every $t\in T$ and every $k\in H$. This implies that $tH\cap T\subseteq T_m$ for every  $t\in T_m$. Therefore $T_m$ is a disjoint union of some sets $tH\cap T$ with such $t$. However, by Lemma \ref{intersection}, the number $\lambda:=|tH\cap T|$ does not depend on the choice of $t\in T$. Thus $\lambda$ divides $|T_m|$.  From Lemma \ref{lowset} it follows that $|T_m|\in\{1,2,3,4,6\}$ and, hence, 
$$\lambda\in\{1,2,3,4,6\}.$$ 

Let us prove that $T_{c_1}$ is regular. Assume the contrary. Then we have that $H=T_{c_1}\cup \{e\}$ or $H=T_{c_1}\cup \{e,q,q^2\},~q\in E\setminus C_1$ by Lemma \ref{Tc1}. In the first case $C^{T}_{TT_{c_1}}=\lambda-1$. In the second there exists $t\in T$ such that $qt\notin T$ or $q^2t\notin T$; otherwise $tq,tq^2\in T$ for every $t\in T$ and, hence, $q\in \rad(T)>e$, a contradiction. So $C^{T}_{TT_{c_1}}=\lambda-1$ or $C^{T}_{TT_{c_1}}=\lambda-2$. Lemma \ref{eq} yields that    

$$C^{T}_{TT_{c_1}}|T|=C^{T_{c_1}}_{TT^{-1}}|T_{c_1}|=|c_1T\cap T||T_{c_1}|=(|T|-\alpha)|T_{c_1}|, ~\eqno(5)$$
where $\alpha=|T_m|-|c_1T_m\cap T_m|$. We conclude that $\alpha>0$  since $c_1\notin\rad(T_m)$. Proposition \ref{orbits} implies that $|T|-\alpha >0$. The number $|T_{c_1}|-C^{T}_{TT_{c_1}}$ is not equal to $0$ because otherwise $\alpha=0$. Therefore from $(5)$ and Proposition \ref{orbits} it follows that 

$$\frac{\alpha|T_{c_1}|}{|T_{c_1}|-C^{T}_{TT_{c_1}}}=|T|\geq 2|T_m|\geq 2\alpha.$$
Thus $|T_{c_1}|\leq 2C^{T}_{TT_{c_1}}\leq 10$ because $C^{T}_{TT_{c_1}}\leq \lambda-1\leq 5$. So $|H|=9$ and, hence, $H=E$ or $H$ is a cyclic group. Let us show that the latter case is impossible. If $H$ is a cyclic group then we may assume that $H\leq C$. On the one hand, $|tH\cap T|\geq 3$ for $t\in T\setminus T_m$ by Lemma~\ref{oarg}. On the other hand, Lemma~\ref{lowset} implies that $0<|tH\cap T|\leq |tC\cap T_m|\leq 2$ for any $t\in T_m$. We have a contradiction with Lemma \ref{intersection}. Therefore $H=E$ and $T_{c_1}$ is regular.

From Lemma \ref{oarg} it follows that $tC_1\subseteq tH\cap T$ for every $t\in T\setminus T_m$. Thus $\lambda=3$ or $\lambda=6$. To complete the proof of Lemma \ref{nonregset}, we show  that both of these cases are impossible. Since $\lambda$ divides $|T_m|$ and $|T_m|\le 6$, we have $|T_m|=3$ or $|T_m|=6$. Besides, Lemma \ref{lowset} yields that $|c_1T_m\cap T_m|=0$ (otherwise $C_1\leq \rad(T)$) and $\alpha=|T_m|$. Suppose that $\lambda=6$. Then $|tH\cap T_m|=6$ for any $t\in T_m$. Hence $T_m\subseteq tH$ and $|c_1T_m\cap T_m|>0$, a contradiction. So $\lambda=3$. Then the regularity of  $T_{c_1}$ implies that $|T_{c_1}|\in\{2,3,4,6,8\}$. If $|T_{c_1}|\in\{4,6,8\}$ then $T_{c_1}$ is rational and $C^{T}_{TT_{c_1}}=\lambda-1=2$ because for any $t\in T\setminus T_m$  we have $c_1t,c_1^2t\in tT_{c_1}\cap T$ by Lemma \ref{oarg}. If $|T_{c_1}|\in\{2,3\}$ then $C^{T}_{TT_{c_1}}=\lambda-2=1$ because $t,c_1^2t\in tH\cap T$ but $t,c_1^2t\notin tT_{c_1}\cap T$ for any  $t\in T\setminus T_m$. On the other hand, from $(5)$ it follows that 

$$|T|=\alpha+\frac{2\alpha}{|T_{c_1}|-2}\leq 2\alpha$$
for $|T_{c_1}|\in\{4,6,8\}$ and 

$$|T|=\alpha+\frac{\alpha}{|T_{c_1}|-1}\leq 2\alpha$$ 
for $|T_{c_1}|\in\{2,3\}$. We  conclude that $|T|=2\alpha=2T_m\in \{6,12\}$ since $|T|\geq 2\alpha$  by Proposition \ref{orbits}. 

Let $\pi:D\rightarrow D/E$ be the quotient epimorphism and $T^{'}=\pi(T)$. Then $T^{'}$ is a nonregular basic set with trivial radical of a circulant $S$-ring over $D/E$. Lemma \ref{cyclering} implies that $|T^{'}|\geq 4$. On the other hand, $|T^{'}|=\frac{|T|}{\lambda}=\frac{|T|}{3}$ by the definition of $\lambda$. Thus $|T|\neq 6$. Therefore $|T|=12$. Then $|T^{'}|=4$. Since $T^{'}$ is a nonregular set with trivial radical, Lemma \ref{cyclering} implies that $T^{'}=\langle T^{'} \rangle \setminus \rad(T^{'})=\langle T^{'} \rangle \setminus \{e\}$ and $4=|T^{'}|=3^{i}-1$ for some integer $i$, a contradiction.
\end{proof}

\begin{prop} \label{nonreghigh}
Suppose that there exists a nonregular highest basic set $X$ of an $S$-ring $\mathcal{A}$ over $D$ such that $\rad(X)=e$. Then Statement $(2)$ of Theorem \ref{nonreg} holds. In particular, $\rad(\mathcal{A})=e$.
\end{prop}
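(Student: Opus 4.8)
The plan is to show that the single nonregular highest basic set $X$ with $\rad(X)=e$ forces $\mathcal{A}$ to be primitive up to a direct factor of order at most $3$, and to read off the tensor decomposition from the order-$3$ structure analyzed in the previous section. First I would pin down where $X$ lives. Since $X$ is nonregular, Lemma~\ref{nonregset} shows that $X$ \emph{must} contain an element of order $3$, for otherwise $\rad(X)>e$; combined with $X$ being highest, $X$ then contains elements of order $3$ and of order $3^n$ simultaneously. Next I would note that $C_1$ is not an $\mathcal{A}$-subgroup: were it one, Lemma~\ref{nonregc1} applied to the nonregular set $X$ would give $C_1\leq\rad(X)=e$, a contradiction. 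Feeding this into Lemma~\ref{Tc1} for $T_{c_1}$ leaves exactly two possibilities, since its Statement~$(2)$ requires $C_1$ or $E$ to be an $\mathcal{A}$-subgroup: either $E$ is an $\mathcal{A}$-subgroup (with $T_{c_1}$ regular), or $T_{c_1}$ is rational and nonregular.

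The second step is to produce a direct decomposition $D=H\times L$ with $|L|\leq 3$ together with the product structure of the basic sets. When $T_{c_1}$ is rational and nonregular I would invoke Lemma~\ref{Ts} and Corollary~\ref{R} to control the family $I$ of order-$3$ basic sets and the cyclic $\mathcal{A}$-subgroup $K=T_{c_1}\cup\{e\}$; the point is that $R=D_k\setminus\{e\}$ occupies an entire initial segment of $D$, while, because $X$ reaches order $3^n$ with trivial radical, repeated use of Lemma~\ref{oarg} forces the high elements of each basic set to occur in full $C_1$-cosets. When instead $E$ is an $\mathcal{A}$-subgroup I would use the $\mathcal{A}$-section $D/C_1$ and the same splitting into $C_1$-cosets. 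In both situations the small factor $L$ (of order $1$ or $3$) can be isolated inside $E$, and one checks via Lemma~\ref{intersection} (constancy of coset intersections), Lemma~\ref{eq}, and the multiplier Theorem~\ref{burn} that every basic set of $\mathcal{A}$ has the form $Y\times Z$ with $Y$ an $\mathcal{A}_H$-set and $Z$ an $\mathcal{A}_L$-set, that is, $\mathcal{A}=\mathcal{A}_H\otimes\mathcal{A}_L$.

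Finally I would establish $\rk(\mathcal{A}_H)=2$. The key claim is that $\mathcal{A}_H$ has no nontrivial proper $\mathcal{A}$-subgroup: such a subgroup would, by Lemma~\ref{intersection} and Lemma~\ref{oarg} applied to the highest basic set contained in $H$, lie in its radical and contradict $\rad(X)=e$. Hence $\mathcal{A}_H$ is primitive. Since $H$ contains an element of order $3^n$ and $|H|\in\{3^n,3^{n+1}\}$, the group $H$ is either cyclic of order $3^n$ or $H=D$; in either case $H\cong\mathbb{Z}_{3^a}\times\mathbb{Z}_{3^b}$ with $a>b\geq 0$ and $a>1$, so Lemma~\ref{bgroups} gives $\rk(\mathcal{A}_H)=2$, with the boundary case $|L|=1$, $H=D$ being the rank-$2$ ring over $D$ itself. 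This is Statement~$(2)$ of Theorem~\ref{nonreg}. The final assertion $\rad(\mathcal{A})=e$ then follows: in the tensor product the highest basic sets are exactly $(H\setminus\{e\})\times Z$, and $\rad\bigl((H\setminus\{e\})\times Z\bigr)=\rad(H\setminus\{e\})\times\rad(Z)=e$ since $\rad(H\setminus\{e\})=e$.

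I expect the middle step to be the main obstacle: distinguishing an honest tensor product from a mere (generalized) wreath decomposition, and verifying that the splitting across $L$ is uniform for \emph{all} basic sets, not only the highest one. This is precisely where the detailed classification of order-$3$ basic sets in Lemma~\ref{Ts} and the counting via Lemma~\ref{eq} and Lemma~\ref{radbasic} carry the weight, and where the remaining case of Lemma~\ref{Ts} (Statement~$(5)$, where $C_1\leq\rad(T)$ for the order-$3$ sets) must be excluded by comparison with the trivial radical of $X$.
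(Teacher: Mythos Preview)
Your opening is correct, but you keep a phantom case alive. Once you know $X\cap E\neq\varnothing$ and $X$ is highest, the case ``$E$ is an $\mathcal{A}$-subgroup'' is immediately impossible: a basic set meeting an $\mathcal{A}$-subgroup is contained in it, yet $X$ has elements of order $3^n>3$ outside $E$. So in fact \emph{neither} $C_1$ nor $E$ is an $\mathcal{A}$-subgroup, and Lemma~\ref{Tc1} forces $T_{c_1}$ to be rational and nonregular. The paper then makes the decisive observation that you are missing: \emph{$T_{c_1}$ itself is a highest basic set}. Indeed, if $T_{c_1}\neq X$ then every order-$3$ basic set other than $T_{c_1}$ has trivial radical (else $C_1\leq\rad(X)$), so Corollary~\ref{R} gives $R=D_k\setminus\{e\}$; since the highest set $X$ lies in $R$, one gets $D_k=D$. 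With $T_{c_1}$ highest, Lemma~\ref{Tc1} says $T_{c_1}\cup L$ is an $\mathcal{A}$-subgroup with $|L|\leq 3$, and that subgroup is therefore all of $D$ or a cyclic index-$3$ subgroup $H$. In the cyclic case Statements~$(1)$ and~$(4)$ of Lemma~\ref{Ts} (Statements~$(2)$, $(3)$, $(5)$ are ruled out because $H$ is cyclic and the radicals are trivial) hand you the tensor product $\mathcal{A}_H\otimes\mathcal{A}_L$ directly.

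Your ``second step'' attempts to manufacture the tensor product by coset-counting with Lemmas~\ref{intersection}, \ref{eq}, \ref{oarg}, but this is both vaguer and harder than needed; the paper never does this, because once $H=T_{c_1}\cup\{e\}$ is identified the decomposition is read off from Lemma~\ref{Ts}. Your rank-$2$ argument is also off target: you propose to show $\mathcal{A}_H$ is primitive and then invoke Lemma~\ref{bgroups}, but the claimed step ``a nontrivial proper $\mathcal{A}$-subgroup of $H$ would lie in $\rad(X)$ by Lemmas~\ref{intersection} and~\ref{oarg}'' is not justified (those lemmas do not give inclusion into the radical in general). In the paper $\rk(\mathcal{A}_H)=2$ is immediate, since $H\setminus\{e\}=T_{c_1}$ is a single basic set. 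So the missing idea is precisely the one line ``$T_{c_1}$ is highest''; with it, everything collapses to Lemma~\ref{Tc1} plus Lemma~\ref{Ts}, and the elaborate coset/primitivity machinery in your plan is unnecessary.
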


\begin{proof}
Since $X$ is nonregular and $\rad(X)=e$, we conclude by Lemma \ref{nonregset}  that the set $X\cap E$ is not empty. So neither $C_1$ nor $E$ is an $\mathcal{A}$-subgroup (the former follows from Lemma \ref{nonregc1}). So  we have  that $T_{c_1}$ is nonregular and rational by Statement $(2)$ of Lemma \ref{Tc1}. 

Show that $T_{c_1}$ is a highest basic set. This is obvious if $T_{c_1}=X$. If $T_{c_1}\neq X$ then every basic set $T$ with $T\cap E\neq \varnothing$ and $T\neq T_{c_1}$ has trivial radical (otherwise $c_1\in \rad(X)$). From Corollary \ref{R} it follows that $D_k\setminus \{e\}=R$, where $3^k=max |t|,~t\in T_{c_1}$, and $R$ is a union of all basic sets containing elements of order $3$. Since $X\subseteq R$ and $X$ is a highest set, this implies that $D_k=D$ and we are done. 

Lemma \ref{Tc1} implies that $H=T_{c_1}\cup \{e\}$ is an $\mathcal{A}$-subgroup or $H=T_{c_1}\cup \{e,q,q^2\}$ is an $\mathcal{A}$-subgroup, where $q\in E\setminus C_1$. In the latter case $H=D$ because $T_{c_1}$ is highest. Thus  $T_{c_1}=X$ is the unique highest basic set and $q\in \rad(T_{c_1})$ that is impossible. Thus $H=T_{c_1}\cup \{e\}$. Now, if $H$ is noncyclic then $H=D$ and $\mathcal{A}=\mathcal{A}_H\otimes\mathcal{A}_L$ for $L=\{e\}$. If $H$ is cyclic then  we have $\mathcal{A}=\mathcal{A}_H\otimes\mathcal{A}_L$ for $L$ of order $3$ by Statements $(1)$ and $(4)$ of Lemma \ref{Ts}. 
\end{proof}

\begin{proof}[Proof of the Theorem \ref{nonreg}]
Suppose that $\mathcal{A}$ is not regular. Then there exists a nonregular highest basic set $X$ and we are done by Proposition \ref{nonreghigh}.
\end{proof}

\section{$S$-rings over $D=\mathbb{Z}_3\times \mathbb{Z}_{3^n}$: regular case }
In this section we classify regular $S$-rings with trivial radical over $D$. The main result is given by the following theorem.

\begin{theo} \label{regular}
Let $\mathcal{A}$ be a regular $S$-ring over $D$. Suppose that $\rad(\mathcal{A})=e$. Then $\mathcal{A}$ is cyclotomic. Moreover,  $\mathcal{A}$ is Cayley isomorphic to $Cyc(K,D)$, where $K\leq \operatorname{\aut}(D)$ is one of the groups listed in Table $1$. In particular, $\mathcal{A}$ is schurian.
\end{theo}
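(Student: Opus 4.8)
The plan is to construct a subgroup $K\le\aut(D)$ with $\mathcal{S}(\mathcal{A})=\orb(K,D)$. Cyclotomy then gives schurity immediately: taking $\Gamma=D_{right}\rtimes K\le\sym(D)$ one has $\Gamma_e=K$, so $\orb(\Gamma_e,D)=\orb(K,D)=\mathcal{S}(\mathcal{A})$ and $\mathcal{A}=\mathcal{A}(\Gamma,D)$ is schurian. The Cayley-isomorphism assertion is then formal, since conjugating $K$ by $f\in\aut(D)$ sends $Cyc(K,D)$ to $Cyc(f^{-1}Kf,D)$ via the Cayley isomorphism $f$; so it suffices to check that the $K$ produced is $\aut(D)$-conjugate to one of the rows of Table~$1$.

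First I would determine the highest basic sets. If $T$ is highest and regular, all of its elements have order $3^n$, whence $3^m=\min\{|t|:t\in T\}=3^n$ and $T=T_m$; since $\rad(T)=e$, Lemma~\ref{lowset} applies and presents $T=X\cup sY\cup s^2Y_1\cup sZ\cup s^2Z_1$ in which each of $X,\ Y\cup Y_1,\ Z\cup Z_1$ is either a singleton or a pair $\{x,x^{-1}\}$. Thus $|T|\le 6$. Moreover, writing $\sigma_m\colon x\mapsto x^m$, the multiplier subgroup $\{m : X^{(m)}=X \text{ for all } X\in\mathcal{S}(\mathcal{A})\}$, which permutes basic sets by Theorem~\ref{burn}, is contained in $\{1,-1\}$: a power map fixing a singleton $\{x\}$ with $|x|=3^n$ forces $m\equiv1$, and one fixing $\{x,x^{-1}\}$ forces $m\equiv\pm1\pmod{3^n}$. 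Hence the ``diagonal'' part of the sought $K$ is at most $\grp{\sigma_{-1}}$, present exactly in case $(2)$ of Lemma~\ref{lowset}, while the transitivity within a highest basic set meeting several cosets $s^iC$ will have to come from shear automorphisms of $D$.

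Next I would organise the argument around $\mathcal{A}$-subgroups. Because $\mathcal{A}$ is regular, $D\setminus D_{n-1}$ is exactly the union of the (regular) highest basic sets, hence an $\mathcal{A}$-set, so $D_{n-1}=\{g\in D:|g|\le3^{n-1}\}$ is an $\mathcal{A}$-subgroup and $\mathcal{A}_{D_{n-1}}$ is an $S$-ring over $\mathbb{Z}_3\times\mathbb{Z}_{3^{n-1}}$ by Lemma~\ref{restrict}. Lemmas~\ref{Tc1} and~\ref{Ts}, applied to $T_{c_1}$, decide whether $C_1$, $E$, or a cyclic group $T_{c_1}\cup\{e\}$ is an $\mathcal{A}$-subgroup, and in particular whether $C$ is one. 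I would split into cases accordingly. When $C$ (together with $S$ or $E$) is an $\mathcal{A}$-subgroup, $\mathcal{A}$ splits as a tensor product of an $S$-ring over a cyclic $3$-group and a smaller factor — a nontrivial wreath layer being excluded, since the radical of its highest basic sets would be nontrivial, against $\rad(\mathcal{A})=e$; here Lemma~\ref{cyclering} and Lemma~\ref{cyclesection} are used to force the cyclic factor to be cyclotomic, and the factors reassemble into a cyclotomic $K=K_1\times K_2$. In the remaining ``mixed'' cases the highest basic sets genuinely meet all three cosets $C,sC,s^2C$, and I would realise them as orbits of the shear automorphisms $s\mapsto s^{\alpha}c_1^{\gamma},\ c\mapsto s^{\beta}c^{\delta}$ of $D$ together with $\sigma_{-1}$, reading off $K$ from the explicit shapes given by Lemma~\ref{lowset} and Lemma~\ref{Ts}.

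The principal obstacle is to handle the non-highest basic sets and merge everything into a single $K$. Regularity with trivial radical is not inherited by $\mathcal{A}_{D_{n-1}}$: a lower basic set may be non-regular, and then by Lemma~\ref{nonregset} (if it avoids order~$3$) it has nontrivial radical, or its shape is dictated by Lemma~\ref{Ts} (if it contains order~$3$). Since restriction of a cyclotomic ring to an $\mathcal{A}$-subgroup is again cyclotomic, the theorem implicitly requires that such layers be ruled out — equivalently, that no basic set of the form $\grp{X}\setminus\rad(X)$ with $\rad(X)>e$ survives on $C$; showing this, by using the products of the small highest basic sets and the operator of Theorem~\ref{sch} to propagate regularity downward, is the technical heart of the proof. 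Once every basic set is matched to a $K$-orbit, the finitely many groups $K$ that occur are classified up to $\aut(D)$-conjugacy, yielding Table~$1$.
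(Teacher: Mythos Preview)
Your outline has the right skeleton but contains a genuine error and misidentifies the hard step. The claim that ``when $C$ (together with $S$ or $E$) is an $\mathcal{A}$-subgroup, $\mathcal{A}$ splits as a tensor product'' is false: for $Cyc(K_4,D)$ both $C$ and $S$ are $\mathcal{A}$-subgroups, yet the basic set containing $sy$ is $\{sy,s^2y^{-1}\}$, strictly smaller than $\{s,s^2\}\{y,y^{-1}\}$, so $\mathcal{A}$ is neither a tensor nor a wreath product over this decomposition. Likewise for $K_5$ the group $C$ is \emph{not} an $\mathcal{A}$-subgroup, but the highest basic sets $\{x,sx^{-1}\}$ meet only two of the three cosets $s^iC$. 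Your dichotomy ``tensor product'' versus ``highest sets meet all three cosets'' therefore misses $K_4$ and $K_5$ entirely. The paper avoids this by organising the argument around $|X|\in\{1,2,3,4,6\}$ for a fixed highest basic set $X$, which cleanly separates all ten outcomes.

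More seriously, you locate the principal obstacle in the lower layers, whereas in the paper once the highest basic sets are fully determined the rest follows quickly from Theorem~\ref{sch} and Lemma~\ref{aset}. The real difficulty, which your proposal does not engage with, is in the cases $|X|=3$ and $|X|=6$. Lemma~\ref{lowset} only says $X=\{x\}\cup s\{y\}\cup s^2\{z\}$ (or the analogous size-$6$ shape) for \emph{some} $y,z\in C$ of order $3^n$, with no relation between $x,y,z$; you cannot ``read off $K$'' from this. The substantive work --- occupying most of the paper's proof --- is to show $y=q_1x$ and $z=q_2x^{\pm1}$ with $q_1,q_2\in C_1$, by computing products such as $\underline{X}\,\underline{X}^{(k)}$ and $\underline{X}^{-1}\underline{X}^{(l)}$ (where $y=x^k$, $z=x^l$), tracking which elements of $E$ appear, and extracting divisibility constraints that force $k-1$ and $l\pm1$ to be divisible by $3^{n-1}$. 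Without this step one cannot conclude that $X$ is a $K$-orbit for any $K$ in Table~$1$, and the argument does not close.
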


\begin{center}
{\small
\begin{tabular}{|l|l|l|}
  \hline
  name & generators & size    \\
  \hline
  $K_0$ & $(x,s)\rightarrow (x,s)$ & $1$ \\ \hline
  $K_1$ & $(x,s)\rightarrow (x,s^2)$  & $2$ \\  \hline
  $K_2$ & $(x,s)\rightarrow (x^{-1},s)$  & $2$ \\  \hline
  $K_3$ & $(x,s)\rightarrow (x^{-1},s),~(x,s)\rightarrow (x,s^2)$ & $4$ \\ \hline
  $K_4$ & $(x,s)\rightarrow (x^{-1},s^2)$ & $2$  \\ \hline
  $K_5$ & $(x,s)\rightarrow (sx^{-1},s)$  & $2$  \\  \hline
  $K_6$ & $(x,s)\rightarrow (sx,sc_1)$  & $3$  \\  \hline
  $K_7$ & $(x,s)\rightarrow (sx,sc_1),~(x,s)\rightarrow (x,s^2c_1)$  & $6$  \\  \hline
	$K_8$ & $(x,s)\rightarrow (sx,sc_1^2),~(x,s)\rightarrow (x^{-1},sc_1)$  & $6$ \\  \hline
  $K_9$ & $(x,s)\rightarrow (sx,sc_1^2),~(x,s)\rightarrow (x^{-1},s^2)$  & $6$  \\ \hline

\end{tabular}
}

Table 1. The groups of cyclotomic rings with trivial radical.

\end{center}

Before we start to prove Theorem \ref{regular}, we formulate a technical lemma on $S$-rings over $E=S\times C_1$, where $S=\langle s \rangle,~C_1=\langle c_1 \rangle$.

\begin{lemm}\label{sringE}
Let  $\mathcal{A}$ be an $S$-ring over $E$. Suppose that  $C_1$ is an $\mathcal{A}$-subgroup. Then $\mathcal{A}$ is one (up to Cayley isomorphism) of the following $S$-rings:
\begin{enumerate}
\item $\mathcal{A}=\mathbb{Z}C_1\wr \mathcal{A}_S$, where $rk(\mathcal{A}_S)=2$;

\item $\mathcal{A}=\mathbb{Z}E$;

\item $\mathcal{A}=\mathbb{Z}C_1\wr \mathbb{Z}S$;

\item $\mathcal{A}=\mathbb{Z}C_1\otimes \mathcal{A}_S$, where  $rk(\mathcal{A}_S)=2$;

\item $\mathcal{A}=\mathcal{A}_{C_1}\otimes \mathbb{Z}S$, where $rk(\mathcal{A}_{C_1})=2$;

\item  $\mathcal{A}=\mathcal{A}_{C_1}\wr \mathbb{Z}S$, where $rk(\mathcal{A}_{C_1})=2$;

\item $\mathcal{A}=Cyc(M,E)$, where $M=\{e,~\delta\},~\delta:x\rightarrow x^{-1}$;

\item $\mathcal{A}=\mathcal{A}_{C_1}\otimes \mathcal{A}_S$, where $rk(\mathcal{A}_Q)=rk(\mathcal{A}_{C_1})=2$;

\item  $\mathcal{A}=\mathcal{A}_{C_1}\wr \mathcal{A}_S$,where $rk(\mathcal{A}_S)=rk(\mathcal{A}_{C_1})=2$;
\end{enumerate}

and  $\mathcal{S}(\mathcal{A})$ without $e$ is one (up to Cayley isomorphism) of the following  forms :

\begin{enumerate}
\item $\mathcal{S}(\mathcal{A})=\{\{c_1\},~\{c_1^2\},~\{s,sc_1,sc_1^2,s^2,s^2c_1,s^2c_1^2\}\}$;

\item $\mathcal{S}(\mathcal{A})=\{\{c_1\},~\{c_1^2\},~\{s\},~\{s^2\},~\{sc_1\},~\{s^2c_1\},~\{sc_1^2\},~\{s^2c_1^2\}\}$;

\item $\mathcal{S}(\mathcal{A})=\{\{c_1\},~\{c_1^2\},~\{s,sc_1,sc_1^2\},~\{s^2,s^2c_1,s^2c_1^2\}\}$;

\item $\mathcal{S}(\mathcal{A})=\{\{c_1\},~\{c_1^2\},~\{s,s^2\},~\{sc_1,s^2c_1\},~\{sc_1^2,s^2c_1^2\}\}$;

\item $\mathcal{S}(\mathcal{A})=\{\{c_1,c_1^2\},~\{s\},~\{s^2\},~\{sc_1,sc_1^2\},~\{s^2c_1,s^2c_1^2\}\}$;

\item  $\mathcal{S}(\mathcal{A})=\{\{c_1,c_1^2\},~\{s,sc_1,sc_1^2\},~\{s^2,s^2c_1,s^2c_1^2\}\}$;

\item $\mathcal{S}(\mathcal{A})=\{\{c_1,c_1^2\},~\{s,s^2\},~\{sc_1,s^2c_1^2\},~\{sc_1^2,s^2c_1\}\}$;

\item $\mathcal{S}(\mathcal{A})=\{\{c_1,c_1^2\},~\{s,s^2\},~\{sc_1,s^2c_1^2,sc_1^2,s^2c_1\}\}$;

\item  $\mathcal{S}(\mathcal{A})=\{\{c_1,c_1^2\},~\{s,s^2,sc_1,s^2c_1^2,sc_1^2,s^2c_1\}\}$.
\end{enumerate}

\end{lemm}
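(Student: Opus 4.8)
The plan is to pin down $\mathcal{A}$ by the two $S$-rings it induces on the section $C_1$ and on the quotient $\bar E = E/C_1$, both of which are $S$-rings over a group of order $3$ and hence have rank $2$ or $3$. Since $\underline{C_1}\in\mathcal{A}$, every basic set lies inside $C_1$ or is disjoint from it, so the task reduces to partitioning the six elements of $E\setminus C_1 = sC_1\cup s^2C_1$, subject to the rank of $\mathcal{A}_{C_1}$ (basic sets $\{c_1\},\{c_1^2\}$ or $\{c_1,c_1^2\}$) and the rank of $\mathcal{A}_{\bar E}$ (basic sets outside $C_1$ lying in one coset each, or spanning both). Two facts drive the analysis: by Theorem \ref{burn} the inversion $\delta:x\mapsto x^{-1}$ is a multiplier, so basic sets come in $\delta$-pairs and $\delta$ interchanges $sC_1$ and $s^2C_1$; and by Lemma \ref{intersection} a basic set $X\subseteq E\setminus C_1$ meets each coset it intersects in the same number of points, so $|X\cap sC_1|=|X\cap s^2C_1|$ when both are nonzero. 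I would organize the proof as the four cases given by the two ranks.

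When $\mathcal{A}_{C_1}$ has rank $3$, the singleton $\{c_1\}$ is a basic set and Lemma \ref{aset} shows that left multiplication by $c_1$ permutes the basic sets of $\mathcal{A}$. Hence the $\langle c_1\rangle$-orbit of a basic set $X\subseteq E\setminus C_1$ has length $1$ or $3$: length $1$ gives $c_1\in\rad(X)$, so $X$ is a union of $C_1$-cosets, while length $3$ gives three disjoint translates of equal size inside $E\setminus C_1$. A short count then leaves, in the rank-$3$ quotient subcase, only the whole cosets $sC_1,s^2C_1$ or six singletons, that is $\mathbb{Z}C_1\wr\mathbb{Z}S$ and $\mathbb{Z}E$; and in the rank-$2$ quotient subcase only one block of size $6$ or three blocks of size $2$ permuted by $c_1$, that is $\mathbb{Z}C_1\wr\mathcal{A}_S$ and $\mathbb{Z}C_1\otimes\mathcal{A}_S$. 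These are Statements $(3),(2),(1),(4)$.

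When $\mathcal{A}_{C_1}$ has rank $2$, the set $\{c_1,c_1^2\}$ is a single basic set and the multiplication argument is unavailable. If the quotient has rank $3$, every basic set outside $C_1$ lies in one coset and $\delta$ pairs the blocks of $sC_1$ with those of $s^2C_1$; the only admissible partitions of $sC_1$ are the whole coset, giving $\mathcal{A}_{C_1}\wr\mathbb{Z}S$, or a $2+1$ split, giving $\mathcal{A}_{C_1}\otimes\mathbb{Z}S$ (Statements $(6),(5)$), since the three-singleton split is impossible: the product $\underline{\{s\}}\,\underline{\{s^2c_1\}}$ contains $c_1$ but not $c_1^2$, contradicting that $\{c_1,c_1^2\}$ is basic. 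If instead the quotient also has rank $2$, then every basic set outside $C_1$ meets both cosets equally and so has even size, leaving the size-types $6$, $4+2$, and $2+2+2$; these I would identify with $\mathcal{A}_{C_1}\wr\mathcal{A}_S$, $\mathcal{A}_{C_1}\otimes\mathcal{A}_S$, and $Cyc(\{e,\delta\},E)$, that is Statements $(9),(8),(7)$.

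I expect the rank-$2$/rank-$2$ case, together with the bookkeeping of Cayley isomorphisms, to be the main obstacle. The key point is that a size-$2$ block $X$ spanning both cosets must be $\delta$-stable: if $X\neq X^{-1}$ then either $X^{-1}$ is not among the basic sets (ruling out the $4+2$ split) or, in the $2+2+2$ split, expanding $\underline{X}^{\,2}$ produces one of $c_1,c_1^2$ with coefficient $2$ and the other with coefficient $0$, again contradicting that $\{c_1,c_1^2\}$ is basic. Once the blocks are forced to be $\delta$-stable, the $2+2+2$ split is uniquely $\{s,s^2\},\{sc_1,s^2c_1^2\},\{sc_1^2,s^2c_1\}$ (Statement $(7)$), while the three possible $\delta$-stable size-$2$ blocks in the $4+2$ split are carried to one another by the shear $s\mapsto sc_1$ of $E$ fixing $C_1$, so they all reduce to Statement $(8)$ up to Cayley isomorphism; the same shears and the identification of the relevant involutions in $\aut(E)\cong\GL(2,3)$ absorb the apparent ambiguities in the rank-$3$ cases. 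Collecting the cases and reading off the basic-set partitions then matches the two enumerations in the statement and finishes the proof.
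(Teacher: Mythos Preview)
Your argument is correct and complete: the four-way split by $\rk(\mathcal{A}_{C_1})$ and $\rk(\mathcal{A}_{E/C_1})$, together with the multiplier action of $c_1$ (when $\{c_1\}$ is basic), the coset-intersection count from Lemma~\ref{intersection}, and the $\delta$-stability arguments, does exhaust all candidate partitions and match them with Statements~(1)--(9). The only places where a reader might want an extra line are the verification that the three $2{+}1$ splits of $sC_1$ in the rank-$2$/rank-$3$ case and the three $\delta$-stable size-$2$ blocks in the rank-$2$/rank-$2$ case really are carried to one another by the shear $s\mapsto sc_1$ fixing $C_1$; you allude to this but do not write it out.

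The paper, by contrast, does not give a mathematical argument at all: its proof of Lemma~\ref{sringE} reads ``Follows from calculations in the group ring of $E$ that made by~\cite{GAP}.'' Since $|E|=9$, a brute enumeration of all $S$-rings over $E$ with $\underline{C_1}\in\mathcal{A}$ is trivially within reach of a computer, and the author is content with that. Your approach is genuinely different: it replaces an opaque machine check by a transparent, structural classification that explains \emph{why} exactly these nine forms arise, namely as the intersection of three constraints (the two restriction ranks and the coset-uniformity/inverse-closure conditions). What the computational route buys is brevity and certainty for a lemma the author treats as auxiliary; what your route buys is insight, reusability of the individual case arguments, and independence from software.
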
 

\begin{proof}
Follows from calculations in the group ring of $E$ that made by \cite{GAP}.
\end{proof}

\begin{proof}[Proof of the Theorem \ref{regular}]
Let $X$ be a highest basic set and $x\in X$. Without loss of generality we may assume that $\langle x \rangle=C$. From Lemma \ref{lowset} it follows that $|X|\in \{1,2,3,4,6\}$. Let us consider all these cases. 

\textbf{Case 1: $\boldsymbol{|X|=1}$}. In this case $X=\{x\}$. Lemma \ref{radbasic} implies that  $C$ is an $\mathcal{A}$-subgroup and $\mathcal{A}_{C}=\mathbb{Z}C$. Suppose that the basic set $T_s$  that contains $s$ is not regular. Then from Lemma  \ref{nonregc1}  it follows that $C_1\leq \rad(T_s)$. Hence $|T_s|\geq 6$. Let $Y$ be the highest basic set containing $sx^{-1}$. Suppose that $|Y|=6$. Then $Y_{0e}\neq \varnothing,~Y_{1s}\neq \varnothing,~Y_{2s^2}\neq \varnothing,$ and $Y=Y^{-1}$. Therefore every highest basic set is rationally conjugate to $Y$ that is not true for $X$. Thus $|Y|\leq 4$ . By Lemma~\ref{aset}, the set $x^{-1}Y$ is a basic set containing $s$. So $x^{-1}Y=T_s$. However,  $|x^{-1}Y|\leq 4$ and  $|T_s|\geq 6$, a contradiction. Therefore $T_s$ is regular. This implies that $E$ is an $\mathcal{A}$-subgroup. Since also $c_1,c_1^2\in \mathcal{A}$, we conclude that $\mathcal{S}(\mathcal{A}_E)$ is  one (up to Cayley isomorphism) of the forms $(1)-(4)$ from Lemma \ref{sringE}. 

If  $\mathcal{S}(\mathcal{A}_E)$ is of the  form $(1)$  then the set $X_1=x\{s,sc_1,sc_1^2,s^2,s^2c_1,s^2c_1^2\}$ is a highest basic set such that $c_1\in \rad(X_1)$, a contradiction. If $\mathcal{S}(\mathcal{A}_E)$ is of the  form $(3)$ the set $X_1=x\{s,sc_1,sc_1^2\}$ is a highest basic set such that $c_1\in \rad(X_1)$, a contradiction. If $\mathcal{S}(\mathcal{A}_E)$ is of the  form $(2)$ or $(4)$ then $\mathcal{A}_C=\mathbb{Z}C$ and $S$ is an $\mathcal{A}$-subgroup. Therefore $\mathcal{A}$ is the tensor product of $\mathcal{A}_C$ and $\mathcal{A}_S$. In the former case $\mathcal{A}=\mathbb{Z}D=Cyc(K_0,D)$; in the latter case  $\mathcal{A}=Cyc(K_1,D)$.

\textbf{Case 2: $\boldsymbol{|X|=2}$}. Let $X=\{x,x_1\}$. If $x_1\notin C$ then without loss of generality we may assume that $x_1\in sC$. If $x_1\in C$ then Lemma \ref{lowset} implies that $x_1=x^{-1}$. In the first case put $y=s^2x_1$. Note that $y\in C$.

Let  $X=\{x\}\cup s\{y\}$. Since $C$ is cyclic, there exists $k\in \mathbb{Z}$ such that $y=x^k$. By Theorem \ref{burn}, the set $X^{(2)}=\{x^2\}\cup s^2\{y^2\}$ is basic. Since $2\underline{sxy}=\underline{X}^2-\underline{X}^{(2)}$, we conclude that $Y=\{sxy\}\in \mathcal{S}(\mathcal{A})$. If $k\equiv 1\mod 3$ then $Y$ is a highest basic set of cardinality $1$, and we are done by the previous case. So we may assume that $k\equiv 2\mod 3$. Theorem \ref{burn} yields that the set $Z=X^{(-k)}=\{x^{-k}\}\cup s\{x^{-k^2}\}$ is basic. 

Consider the element 

$$\xi=\underline{X}~\underline{Z}=x^{-k+1}+sx^{-k^2+1}+s+s^2x^{-k^2+k}.~\eqno(6)$$ 
The elements $x^{-k+1}$ and $s^2x^{-k^2+k}$ have order $3^n$ because $k\equiv 2\mod 3$. Hence $T_s=\{s\}$ or $T_s=\{s,sx^{-k^2+1}\}$. In the latter case $T_s$ is regular because otherwise $|T_s|\geq 4$ by Lemma \ref{oarg}. If $T_s=\{s,sx^{-k^2+1}\}$ then $2\underline{s^2x^{-k^2+1}}=\underline{T_s}^2-\underline{T_s}^{(2)}$ and $\{s^2x^{-k^2+1}\}\in \mathcal{S}(\mathcal{A})$. Therefore there exists a basic set that has form $\{q\},~q\in E\setminus C_1$. Without loss of generality we may assume that $T_s=\{s\}$. Then the sets 
$$sX=s\{x\}\cup s^2\{y\},~s^2X=s^2\{x\}\cup \{y\}$$
are basic. The latter implies that $X^{(k)}=s^2X$. Thus $x^{k^2}=y^k=x$ and, hence, $3^n$ divides $k^2-1=(k-1)(k+1)$. So  $3^n$ divides $k+1$. This shows that $y=x^{-1}$. Since $C$ is cyclic, Theorem \ref{burn} yields that every highest basic set is rationally conjugate to $X$ or to $sX$. Hence every highest basic set has one of the following three forms 
$$\{x\}\cup s\{x^{-1}\},~s\{x\}\cup s^2\{x^{-1}\},~s^2\{x\}\cup \{x^{-1}\},~x\in C,~|x|=3^n.$$ 
From Theorem \ref{sch} it follows  that $\{u,u^{-1}\}$ is a basic set for every $u\in C,~ |u|<3^n$. Since $\{s\}\in \mathcal{S}(\mathcal{A}),$ the sets $s\{u,u^{-1}\},~s^2\{u,u^{-1}\}$ are basic for every $u\in C,~ |u|<3^n$. Thus we have $\mathcal{A}=Cyc(K_5,D)$.

Now let $X=\{x,x^{-1}\}$. Then $C$ is an $\mathcal{A}$-subgroup by Lemma \ref{radbasic}. The basic sets of $\mathcal{A}_{C}$ are of the form $\{x^k,x^{-k}\},~k\in \mathbb{Z},$ by Theorem \ref{burn} and Theorem \ref{sch}. Let us show that  
$$\rad(T_s)=e.~\eqno(7)$$ 
Assume the contrary. Then  $sc_1,sc_1^2\in T_s$. Let $Y$ be the highest basic set containing $sx$. Note that $|Y|\in \{2,4\}$ because otherwise Theorem~\ref{burn} implies that every highest basic set has cardinality $3$ or $6$. So $Y=s\{x\}\cup s^2\{x^{-1}\}$ or $Y=s\{x,y\}\cup s^2\{x^{-1},y^{-1}\},~y\in C$. The element $s$ enters the element $\psi=\underline{X}~\underline{Y}$. Therefore $sc_1$ and $sc_1^2$ enter $\psi$. If $|Y|=2$ then only the elements $s$ and $sx^2$ from $sC$ enter $\psi$, a contradiction. If $|Y|=4$ then only the elements $s,sx^2,sxy,sx^{-1}y$ from $sC$ enter $\psi$. However, $sx^2$ and one of the elements $sxy,sx^{-1}y$ have order $3^n$, a contradiction. The obtained contradiction proves $(7)$. Moreover,  $T_s$ is regular since otherwise  we have $C_1\leq \rad(T_s)$ by Lemma \ref{nonregc1}. Hence $E=\langle T_{c_1},T_s \rangle$ is an $\mathcal{A}$-subgroup by Lemma \ref{radbasic} and $\mathcal{S}(\mathcal{A}_E)$ is one (up to Cayley isomorphism) of the forms $(5),(7),(8)$ from Lemma \ref{sringE}. 

If $\mathcal{S}(\mathcal{A}_E)$ is of the form $(5)$ then $\mathcal{A}_S=\mathbb{Z}S$ and $C$ is an $\mathcal{A}$-subgroup. Therefore, $\mathcal{A}=\mathcal{A}_C\otimes\mathcal{A}_S$ and $\mathcal{A}=Cyc(K_2,D)$. If $\mathcal{S}(\mathcal{A}_E)$ is of the form $(7)$ or $(8)$ then the set $\{s,s^2\}$ is basic and, hence, every basic  set outside $C$ is of the form either $s\{y\}\cup s^2\{y^{-1}\}$ or $s\{y,y^{-1}\}\cup s^2\{y,y^{-1}\}$, where $y\in C$. Assume that there exist basic sets 
$$Y=s\{y\}\cup s^2\{y^{-1}\},~y\in C,~Z=s\{z,z^{-1}\}\cup s^2\{z,z^{-1}\},~z\in C.$$ 
Then the elements $sz$ and $s^2z^{-1}$ enter the element $(zy^{-1}+yz^{-1})\underline{Y}$ whereas the elements $sz^{-1}$ and $s^2z$ do not, a contradiction. So all basic sets outside $C$   are of the form $s\{y\}\cup s^2\{y^{-1}\},~y\in C$, or all basic sets outside $C$ are of the form $s\{y,y^{-1}\}\cup s^2\{y,y^{-1}\},~y\in C$. Therefore $\mathcal{A}=Cyc(K_4,D)$ or $\mathcal{A}=\mathcal{A}_C \otimes \mathcal{A}_{S}=Cyc(K_3,D) $, where $\mathcal{A}_{S}$ is an $S$-ring of rank $2$ over $S$.  

\textbf{Case 3: $\boldsymbol{|X|=4}$}. Lemma \ref{lowset} implies that $X=\{x,x^{-1}\}\cup s\{y\}\cup s^2\{y^{-1}\},~y\in C,$ because $x\in C$. Obviously,
   
$$\underline{X}~\underline{X}=x^2+x^{-2}+s^2y^2+sy^{-2}+4e+2sxy+2s^2xy^{-1}+2sx^{-1}y+2s^2x^{-1}y^{-1}.~\eqno(8)$$

Since   $X^{(2)}\in \mathcal{S}(\mathcal{A})$ and exactly two elements from $sxy,~s^2xy^{-1},~sx^{-1}y,~s^2x^{-1}y^{-1}$ have order $3^n$, there exists a highest basic set $Y$ such that $|Y|\leq 2$ and we arrive at Case $1$ or $2$.

\textbf{Case 4: $\boldsymbol{|X|=3}$}. In this case $X=\{x\}\cup s^i\{y\}\cup s^j\{z\},~x,y,z\in C,~i,j\in \{1,2\}$. Since $C$ is cyclic, there exist $k,l\in \mathbb{Z}$ such that $y=x^k,z=x^l$. Note that $T=\{s^ix^{k+1},s^jx^{l+1},s^{i+j}x^{k+l}\}$ is an $\mathcal{A}$-set because
$$s^ix^{k+1}+s^jx^{l+1}+s^{i+j}x^{k+l}=\underline{X}^2-\underline{X}^{(2)}.$$
If $k\equiv 2 \mod 3$ or $l\equiv 2 \mod 3$ then $T$ contains exactly one element of order $3^n$ and we arrive at Case~$1$. So we assume that $k\equiv 1 \mod 3$ and $l\equiv 1 \mod 3$. Suppose that $i=j$. Then $s^iy,s^jz\in \langle s^ix \rangle$. From Lemma \ref{lowset} it follows that $s^jz=(s^iy)^{-1}$ that is not true. Therefore $i\neq j$. Without loss of generality we  assume that $i=1,~j=2$.

Let us show that 
$$\underline{C_1},~\underline{E}\in \mathcal{A}.~\eqno(9)$$ 
Suppose first that $|xE\cap X|=3$. Then $X=x\{e,s\varepsilon_1,s^2\varepsilon_2\},~\varepsilon_1,\varepsilon_2\in C_1$. Note that $\varepsilon_2\neq \varepsilon_1^{-1}$ since otherwise $\rad(X)=C_1>e$.  A straightforward computation shows that
$$\underline{E\setminus C_1}=\underline{X}~\underline{X}^{-1}-3e.$$
So $E\setminus C_1$  is an $\mathcal{A}$-set. Hence $E=\langle E\setminus C_1 \rangle$ is an $\mathcal{A}$-subgroup by Lemma \ref{radbasic}  and $C_1=E\setminus(E\setminus C_1)$ is an $\mathcal{A}$-subgroup. If $|xE\cap X|<3$ then from Theorem \ref{sch} it follows that $X^{[3]}=\{x^3,y^3,z^3\}\subseteq C$ is an $\mathcal{A}$-set.  Suppose that $\{x^3\}$ is a basic set. Then the set $x^3X^{(-2)}$ is basic. Hence $x^3X^{(-2)}=X$ and $|xE\cap X|=3$, a contradiction. Similarly, neither $y^3$ nor $z^3$ is a basic set. Therefore $X^{[3]}$ is a basic set. Theorem~\ref{cyclering} yields that $X^{[3]}\in Orb(K,C_{n-1})$ for some $K\leq \aut(C_{n-1})$. If $|X^{[3]}|=2$ then $\rad(X^{[3]})=e$ and, by Lemma \ref{cycleorbit}, without loss of generality we have $x^{3k}=y^3=x^3,~x^{3l}=z^3=x^{-3}$. This contradicts the fact that  $k\equiv 1 \mod 3$ and $l\equiv 1 \mod 3$. Therefore $|X^{[3]}|=3$ and  this set has nontrivial radical by Lemma~\ref{cycleorbit}. Thus $X^{[3]}=x^3C_1$ and  $C_1=\rad(X^{[3]})$ is an $\mathcal{A}$-subgroup by Lemma~\ref{radbasic}. To prove that $E$ is an $\mathcal{A}$-subgroup it is sufficient to verify that $T_s$ is regular. Indeed, if $T_s$ is regular then $E=\langle T_s,C_1\rangle$ is an $\mathcal{A}$-subgroup by Lemma \ref{radbasic}. Assume that $T_s$ is not regular. Then $C_1\leq \rad(T_s)$ by Lemma \ref{nonregc1} and, hence, $|T_s|\geq 6$. Since $X^{[3]}=x^3C_1$ is a basic set, Lemma \ref{radbasic} implies that $C_{n-1}=\langle X^{[3]} \rangle$ is an $\mathcal{A}$-subgroup. Let $\pi:G\rightarrow G/C_1$ be the quotient epimorphism. Then $\{\pi(x^3)\}$ is a highest basic set of $\mathcal{A}_{C_{n-1}/C_1}$ and, hence, $\mathcal{A}_{C_{n-1}/C_1}=\mathbb{Z}{(C_{n-1}/C_1)}$ by Lemma \ref{aset}. The set $\pi(T_s)$ is not regular. Therefore Lemma~\ref{nonregc1} implies that $\rad(\pi(T_s))>e$ and $|\pi(T_s)|\geq 6$. So $|T_s|\geq 18$. Exactly $9$ elements enter the element $\theta_1=\underline{X}~\underline{Y}$, where $Y$ is the highest basic set containing $y^{-1}$. On the other hand, $s$ enters $\theta_1$ and we conclude that at least $18$ elements enter $\theta_1$, a contradiction. Thus $T_s$ is regular and $E$ is an $\mathcal{A}$-subgroup.

Now we prove that 
$$\{c_1\}\in \mathcal{S}(\mathcal{A}).$$ 
Without loss of generality we may assume that $x^{3^{n-1}}=c_1$. Since $k\equiv 1 \mod 3$ and $l\equiv 1 \mod 3$, we have $y^{3^{n-1}}=z^{3^{n-1}}=c_1$ and the set $X^{(3^{n-1}+1)}=c_1X$ is basic. The element $c_1$ enters the element $\underline{c_1X}~\underline{X}^{-1}$ whereas $c_1^2$ does not. So $c_1$ and $c_1^2$ lie in the different basic sets.

Let us check that there are no $\mathcal{A}$-subgroups of order $3$ distinct from $C_1$. Assume the contrary. Without loss of generality let $S$ be an $\mathcal{A}$-subgroup of order $3$ other than $C_1$. Let $\pi_1$  be the quotient epimorphism from $D$ to $D/S$. The set $\pi_1(X)$ has trivial radical since otherwise $\rad(\pi_1(X))=C_1$ and, hence, $\rad(X)=\langle sc_1 \rangle$ or $\rad(X)=\langle s^2c_1 \rangle$. Thus  we have $\pi_1(X)\in Orb(K,C)$ for some $K\leq \aut(C)$ by Theorem \ref{cyclering}. Lemma \ref{cycleorbit} implies that $\pi_1(X)=\{u\}$ or $\pi_1(X)=\{u,u^{-1}\}$. The first case is impossible because in this case $S\leq \rad(X)$. In the second case $y=x^k=x^{-1}$ or $z=x^l=x^{-1}$. This contradicts the fact that $k\equiv 1 \mod 3$ and $l\equiv 1 \mod 3$. Thus there are no $\mathcal{A}$-subgroups of order $3$ distinct from $C_1$ and $(9)$ yields that $\mathcal{S}(\mathcal{A}_E)$ has (up to Cayley isomorphism) form $(1)$ or $(3)$ from Lemma \ref{sringE}. 

Let $Y$ and $Z$ be the basic sets containing the elements $y$ and $z$ respectively. Then by the above remarks,  $Y^{-1}=\{y^{-1}\}\cup s\{u\}\cup s^2\{v\},~u,v\in C$, and $Z^{-1}=\{z^{-1}\}\cup s\{u_1\}\cup s^2\{v_1\},~u_1,v_1\in C$. The element $s$  enters the element $\underline{X}~\underline{Y}^{-1}$. So the elements $sc_1$ and $sc_1^2$ enter $\underline{X}~\underline{Y}^{-1}$. Therefore without loss of generality $u=c_1x^{-1},~v=c_1^2z^{-1}$. The element $s^2$  enters the element $\underline{X}~\underline{Z}^{-1}$. So the elements $s^2c_1$ and $s^2c_1^2$ enter $\underline{X}~\underline{Z}^{-1}$. Therefore without loss of generality $u_1=c_1y^{-1},~v_1=c_1^2x^{-1}$. Thus 
$$Y=\{y\}\cup s\{c_1z\}\cup s^2\{c_1^2x\},~Z=\{z\}\cup s\{c_1x\}\cup s^2\{c_1^2y\}.$$ 
By Theorem \ref{burn}, we have  $Y=X^{(k)},~Z=X^{(l)}$. Since $k\equiv 1 \mod 3$ and $l\equiv 1 \mod 3$, we conclude that $y^k=c_1z,~z^k=c_1^2x,~y^l=c_1x,~z^l=c_1^2y$. These equlities imply that $x^{k^3}=y^{k^2}=x,~x^{l^3}=z^{l^2}=x$. Thus $3^n$ divides  $k^3-1$ and $l^3-1$.  Let $k=3^ap+1$, where $3$ does not divide $p$ and $a\geq 1$. Then 
$$k^3-1=3^{a+1}p(3^{2a-1}p^2+3^ap+1).$$ 
We have $a+1\geq n$ because $3$ does not divide $(3^{2a-1}p^2+3^ap+1)$. Hence $y=x^k=q_1x$ where $q_1\in C_1$. Similarly $z=q_2x,~q_2\in C_1$. Since $\rad(X)=e$, we have $q_2\neq q_1^{-1}$. Every highest basic set is rationally conjugate to $X$ because $X_{0e}\neq \varnothing,~X_{1s}\neq \varnothing,~X_{2s^2}\neq \varnothing$. This implies that if $X$ is an orbit of $K\leq \aut(D)$ then every highest basic set is an orbit of $K$.

A straightforward computation shows that  $q_1q_2x^3$ is the unique element which enters the  element $\underline{X}~\underline{X}^{(3)}$ with coefficient $3$. So the set $\{x^3\}$ is basic. If $\mathcal{S}(\mathcal{A}_E)$ is of the form $(3)$ from Lemma \ref{sringE} then  the basic sets of $\mathcal{A}_{D_{n-1}}$ are of the forms 
$$\{u\},~suC_1,~s^2uC_1,~u\in C_{n-1}$$ 
by Lemma \ref{aset}, and, hence, $\mathcal{A}$ is Cayley isomorphic to $Cyc(K_6,D)$; if $\mathcal{S}(\mathcal{A}_E)$ is of the form $(1)$ from Lemma \ref{sringE} then  the basic sets of $\mathcal{A}_{D_{n-1}}$ are of the forms 
$$\{u\},~suC_1\cup s^2uC_1,~u\in C_{n-1}$$ 
by Lemma \ref{aset}, and, hence, $\mathcal{A}$ is Cayley isomorphic to $Cyc(K_7,D)$.

\textbf{Case 5: $\boldsymbol{|X|=6}$}. From Lemma \ref{lowset} it follows that 
$$X=\{x,x^{-1}\}\cup s\{y,z\}\cup s^2\{y^{-1},z^{-1}\},~x,y,z\in C.$$ 
Theorem \ref{burn} implies that for every highest basic $Y$  there exists $m\in \mathbb{Z}$ such that $Y=X^{(m)}$. Since $C$ is cyclic, there exist $k,l\in \mathbb{Z}$ such that $y=x^k,z=x^l$. Without loss of generality we may assume that 
$$k\equiv 1 \mod 3,~l\equiv 2 \mod 3.$$
Indeed, suppose that $k \equiv l \mod 3$. Then the element $s^2yz$ has order $3^n$ and it enters the element $\theta_2=\underline{X}^2-\underline{X}^{(2)}$. So there exists $b\in C,~|b|=3^n$ that enters $\theta_2$ because every highest basic set nontrivially intersects with $C$. However, only the elements $yz^{-1},~y^{-1}z,~e$ from $C$ enter $\theta_2$. All these elements have order less than $3^n$, a contradiction. Therefore $k \not\equiv l \mod 3$.

Let us prove $(9)$. The proof  is similar to the case when $|X|=3$. If $|xE\cap X|=3$ then 
$$X=x\{e,s\varepsilon_1,s^2\varepsilon_2\}\cup x^{-1}\{e,s^2\varepsilon_1^2,s\varepsilon_2^2\},~\varepsilon_1,\varepsilon_2\in C_1,$$
because $X=X^{-1}$. Note that $\varepsilon_2\neq \varepsilon_1^{-1}$ since otherwise $\rad(X)>e$. Thus all elements from $E\setminus C_1$ enter $\underline{X}^2$ and only these elements from $D_{n-1}$ enter $\underline{X}^2$. Since $\mathcal{A}$ is regular, $D_{n-1}$ is an $\mathcal{A}$-subgroup. So $E\setminus C_1$  is an $\mathcal{A}$-set. Lemma \ref{radbasic} implies that $E=\langle E\setminus C_1 \rangle$ is an $\mathcal{A}$-subgroup and we are done. 

If $|xE\cap X|<3$ then from Theorem \ref{sch} it follows that $X^{[3]}=\{x^3,y^3,z^3,x^{-3},y^{-3},z^{-3}\}\subseteq C$ is an $\mathcal{A}$-set. All basic sets inside $X^{[3]}$ are conjugate  and, hence, have the same radical. Suppose that every basic set inside $X^{[3]}$ has trivial radical. Then from Lemma \ref{cyclering} and Lemma \ref{cycleorbit} it follows that $T_{x^3}=\{x^3\}$ or $T_{x^3}=\{x^3,x^{-3}\}$. In the former case $x^3X^{(2)}$ is a basic set by Lemma \ref{aset}. So $x^3X^{(2)}=X$ and $x^5=x$, a contradiction with $|x|>3$. In the latter case $(x^3+x^{-3})\underline{X}^{(2)}$ contains $12$ elements including $x$ and $x^5$. This implies that 
$$(x^3+x^{-3})\underline{X}^{(2)}=\underline{X}+\underline{X}^{(5)}.~\eqno(10)$$
A straightforward computation shows that $s\{x^3y^{-2},x^3z^{-2},x^{-3}y^{-2},x^{-3}z^{-2}\}$ is the set of all elements from $sC$ that the summand in the left-hand side of $(10)$ contains. Therefore 
$$\{x^3y^{-2},x^3z^{-2},x^{-3}y^{-2},x^{-3}z^{-2}\}=\{y,z,y^{-5},z^{-5}\}.$$ 
Since  $k\equiv 1 \mod 3,~l\equiv 2 \mod 3$, we conclude that $y^3,z^3\in\{x^3,x^{-3}\}$. This yields that $|xE\cap X|=3$, in contrast to the assumption. Thus $\rad(X^{[3]})>e$. Moreover, $\rad(X^{[3]})=C_1$ because $|X^{[3]}|\leq 6$. From Lemma \ref{radbasic} it follows that $C_1$ is $\mathcal{A}$-subgroup. If $T_s$ is not regular then like in the case when $|X|=3$ we have that $|T_s|\geq 18$. This contradicts the fact that exactly $16$ elements including $s$ from $sC$ enter $\underline{X}^2$. Thus $T_s$ is regular and $E$ is an $\mathcal{A}$-subgroup.

Now we show that $\{c_1,c_1^2\}\in \mathcal{S}(\mathcal{A})$. Assume the contrary. Then $\{c_1\}$ is a basic set. So $c_1X$ is a basic set. Without loss of generality we assume that $x^{3^{n-1}}=c_1$. Since $k\equiv 1 \mod 3$ and $l\equiv 2 \mod 3$, we  conclude that $y^{3^{n-1}}=c_1$ and $z^{3^{n-1}}=c_1^2$.  On the one hand, 
$$X^{(3^{n-1}+1)}=\{c_1x,c_1^2x^{-1}\}\cup s\{c_1y,c_1^2z\}\cup s^2\{c_1^2y^{-1},c_1z^{-1}\}$$
 is a basic set containing $c_1x$. On the other hand, $X^{(3^{n-1}+1)}\neq c_1X$ because $sc_1z\in c_1X,~sc_1z\notin X^{(3^{n-1}+1)}$, a contradiction. Thus $\{c_1,c_1^2\}\in \mathcal{S}(\mathcal{A})$.

Let us show that there are no $\mathcal{A}$-subgroups of order $3$ distinct from $C_1$. Assume the contrary. Without loss of generality let $S$ be an $\mathcal{A}$-subgroup of order $3$ other than $C_1$. Let $\pi_1$  be the quotient epimorphism from $D$ to $D/S$. Suppose that $\rad(\pi_1(X))>e$. Then $\rad(\pi_1(X))=C_1$ because $|\pi_1(X)|\leq 6$. This implies that $\pi_1(X)=xC_1\cup x^{-1}C_1$. So 
$$X=x\{e,f_1,f_2\}\cup x^{-1}\{e,f_1^{-1},f_2^{-1}\},$$
where $f_1,f_2\in \{sc_1,sc_1^2,s^2c_1,s^2c_1^2\}$. Assume that $f_2\neq f_1^{-1}$. Then $f_2=ff_1$, where $f\in\{s,s^2,c_1,c_1^2\}$. Note that $\{f,f^2\}$ is an $\mathcal{A}$-set. The element $f_2x$ enters $(f+f^2)\underline{X}$   whereas $x$ does not. A contradiction holds because $x$ and $f_2x$ lie in the same basic set $X$. Therefore $f_2= f_1^{-1}$ and $\rad(X)=\{e,f_1,f_1^{-1}\}$ that is not true. Thus $\rad(\pi_1(X))=e$. Hence Lemma \ref{cyclering} and Lemma \ref{cycleorbit} yield that $\pi_1(X)=\{u,u^{-1}\}$. Then $S\leq \rad(X)$, a contradiction. So there are no $\mathcal{A}$-subgroups of order $3$ distinct from $C_1$. Thus by the previous paragraph, $\mathcal{S}(\mathcal{A}_E)$ has form $(6)$ or $(9)$ from Lemma \ref{sringE}.

Further we prove that $y=q_1x,~z=q_2x^{-1},~q_1,q_2\in C_1$. Using the assumption on $l$ and $k$, list all elements from $sC$ of order less than $3^n$ that enter $\underline{X}^{-1}~\underline{X}^{(k)}$ 

$$s,~sz^kx,~sy^kx^{-1},~syz,~sz^{-k}y^{-1},~sy^{-k}z^{-1}.~\eqno(11)$$
and all elements from $sC$ of order less than $3^n$ that enter $\underline{X}^{-1}~\underline{X}^{(l)}$
$$s,~sx^{-1}y^{-l},~syz,~sz^{l}y^{-1},~sy^lz^{-1},~sxz^{-l}.~\eqno(12)$$
Every element from $(11)$ has the form $sx^i$, where 
$$i\in I=\{kl+1,~k^2-1,~k+l,~kl+k,~k^2+l\},$$ 
because $y=x^k,~z=x^l$. Since $c_1\in \rad(T_s)$ and $T_s$ is contained in $\underline{X}^{-1}~\underline{X}^{(k)}$, we conclude that $sc_1$ and $sc_1^2$ are among the elements in $(11)$. 
Therefore  two numbers from $I$ are divisible by $3^{n-1}$. Let us show that   $3^{n-1}$ divides $k-1$. This is obvious by the assumption on $k$ if $k^2-1$ is a multiple of $3^{n-1}$. If $k+l$ and $kl+k$ are divisible by  $3^{n-1}$ then $3^{n-1}$ divides $l(k-1)=kl+k-k-l$, hence, $3^{n-1}$ divides $k-1$ by the assumption on $l$. If $k+l$ and $kl+1$ are multiple of $3^{n-1}$ then $(k-1)(l-1)=kl+1-k-l$ is a multiple of $3^{n-1}$, hence,  $k-1$ is a multiple of $3^{n-1}$. If $k+l$ and $k^2+l$ are divisible by $3^{n-1}$ then $k(k-1)=k^2+l-l-k$ is divisible by $3^{n-1}$, hence,  $k-1$ is divisible by  $3^{n-1}$. If $3^{n-1}$ divides $kl+k$ and $kl+1$ then $3^{n-1}$ divides $k-1=kl+k-kl-l$. If $kl+k$ and $k^2+l$ are multiple of $3^{n-1}$ then $(k-1)(k-l)=k^2+l-kl-k$ is a multiple of $3^{n-1}$, hence, $k-1$ is a multiple of  $3^{n-1}$. If $kl+1$ and $k^2+l$ are divisible by $3^{n-1}$ then without loss of generality $z^kx=x^{kl+1}=c_1,~y^{-k}z^{-1}=x^{-k^2-l}=c_1^2$. Note that $x^{-k^3+1}=(x^{-k^2-l})^kx^{kl+1}=c_1^2c_1=e$. Thus $k^3-1$ is divisible by $3^n$ and $k-1$ is divisible by $3^{n-1}$. So in all cases $k-1$ is divisible by $3^{n-1}$ and we have $y=q_1x,~q_1\in C_1$. 

Every element from $(12)$ has the form $sx^j$, where 
$$j\in J=\{kl+1,~l^2-1,~k+l,~kl-l,~l^2-k\},$$ 
because $y=x^k,~z=x^l$. Again $sc_1$ and $sc_1^2$ are among the elements in $(12)$ because $T_s$ enters $\underline{X}^{-1}~\underline{X}^{(l)}$. This implies that $3^{n-1}$ divides  two elements from $J$. Applying the similar arguments, we conclude that $l+1$ is divisible by $3^{n-1}$ and $z=x^{l+1}x^{-1}=q_2x^{-1}$, where $q_2\in C_1$. Note $q_1\neq q_2$ since otherwise $sq_1\in \rad(X)$. Every highest basic set is rationally conjugate to $X$ because $X_{0e}\neq \varnothing,~X_{1s}\neq \varnothing,~X_{2s^2}\neq \varnothing$. This implies that if $X$ is an orbit of $K\leq \aut(D)$ then every highest basic set is an orbit of $K$.

A straightforward computation shows that only the elements $q_1q_2^2x^3,~q_1^2q_2x^{-3}$ enter the element $\underline{X}^{(3)}$ with coefficient $3$. If $\{q_1q_2^2x^3\}$ is a basic set then Lemma \ref{aset} implies that $\{c_1\}$ is a basic set that is not true. Therefore $\{q_1q_2^2x^3,~q_1^2q_2x^{-3}\}$ is a basic set and every basic set of $\mathcal{A}_{C_{n-1}}$ is of the form $\{u,u^{-1}\},~u\in C_{n-1},$ by Theorem \ref{burn} and Theorem \ref{sch}.  If $\mathcal{S}(\mathcal{A}_E)$  is of the form $(6)$ from Lemma \ref{sringE} then a straightforward check shows that every nonhighest basic set outside $C$ is  one of two forms 
$$\{su,sc_1u,sc_1^2u,su^{-1},sc_1u^{-1},sc_1^2u^{-1}\},~\{s^2u,s^2c_1u,s^2c_1^2u,s^2u^{-1},s^2c_1u^{-1},s^2c_1^2u^{-1}\},u\in C.$$ 
So $\mathcal{A}$ is Cayley isomorphic to $Cyc(K_8,D)$. If $\mathcal{S}(\mathcal{A}_E)$ is of the form $(9)$ from Lemma~\ref{sringE} then a straightforward check shows that every nonhighest basic set outside $C$ is of the  form 
$$\{su,sc_1u,sc_1^2u,s^2u^{-1},s^2c_1u^{-1},s^2c_1^2u^{-1}\},~u\in C.$$ 
So $\mathcal{A}$ is Cayley isomorphic to $Cyc(K_9,D)$. 
\end{proof}

\begin{corl}\label{regc1}
Let $\mathcal{A}$ be a regular $S$-ring over $D$ such that $\rad(\mathcal{A})=e$. Then $C_1$ and $C_{n-1}$ are  $\mathcal{A}$-subgroups.

\end{corl}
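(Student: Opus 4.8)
The plan is to read the result off from the cyclotomic description of $\mathcal{A}$ obtained in Theorem \ref{regular}, combined with the observation that both $C_1$ and $C_{n-1}$ are characteristic subgroups of $D$.

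First I would apply Theorem \ref{regular}: since $\mathcal{A}$ is regular with $\rad(\mathcal{A})=e$, it is cyclotomic, so $\mathcal{A}=Cyc(K,D)$ for some $K\leq\aut(D)$, i.e. $\mathcal{S}(\mathcal{A})=Orb(K,D)$. The general fact I would record is that any $K$-invariant subgroup $H\leq D$ is a union of $K$-orbits, whence $\underline{H}\in\mathcal{A}$ and $H$ is an $\mathcal{A}$-subgroup; in particular this holds for every characteristic subgroup of $D$, because $K\leq\aut(D)$.

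It then suffices to check that $C_1$ and $C_{n-1}$ are characteristic in $D$. Writing an element of $D=S\times C$ as $s^ac^b$ with $|s|=3$, one has $(s^ac^b)^3=c^{3b}$ and $(s^ac^b)^{3^{n-1}}=c^{b3^{n-1}}$, since $s^{3a}=s^{a3^{n-1}}=e$ for $n\geq 2$. Hence the subgroup of cubes $D^3=\{g^3:g\in D\}$ equals $\langle c^3\rangle=C_{n-1}$, and the subgroup of $3^{n-1}$-th powers $D^{3^{n-1}}$ equals $\langle c^{3^{n-1}}\rangle=C_1$. Both are fully invariant, hence characteristic, so by the previous step $\underline{C_1},\underline{C_{n-1}}\in\mathcal{A}$, as required.

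I expect no real obstacle here: the only point deserving attention is that a generic subgroup of order $3^{n-1}$ of $D$ need not be characteristic, and the reason $C_{n-1}$ is characteristic is precisely that cubing annihilates the $S$-component, forcing $D^3$ to land inside $C$ and therefore to coincide with $C_{n-1}$. Note also that one needs only the cyclotomicity asserted in Theorem \ref{regular}, not the finer classification via Table $1$ or the Cayley isomorphism, so the argument is uniform over all the cases $|X|\in\{1,2,3,4,6\}$.
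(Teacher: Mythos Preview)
Your proof is correct. The paper states the corollary without proof, leaving it implicit as a consequence of Theorem \ref{regular}; your observation that $C_1=D^{3^{n-1}}$ and $C_{n-1}=D^{3}$ are fully invariant (hence characteristic) subgroups of $D$ for $n\ge 2$, and therefore $\mathcal{A}$-subgroups for any cyclotomic $\mathcal{A}$, is exactly the clean, case-free way to extract this from the statement of Theorem \ref{regular}.
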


\begin{corl}\label{regisol}
Let $\mathcal{A}$ be a regular $S$-ring over $D$ such that $\rad(\mathcal{A})=e$. If $L$ is an  $\mathcal{A}$-subgroup of order $3$ and  $\mathcal{A}=Cyc(K_i,D),~i\in \{0,1,2,3,4,5\}$, then the group  $\aut(\mathcal{A}_{D/L})$ is $2$-isolated. 
\end{corl}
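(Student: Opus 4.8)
The plan is to reduce the claim to Lemma \ref{regorb} by exhibiting a faithful regular orbit of the point stabilizer of $\aut(\mathcal{A}_{D/L})$ on $D/L$. First I would record that, since $L$ is an $\mathcal{A}$-subgroup, the set $L\setminus\{e\}$ is a union of basic sets of $\mathcal{A}=Cyc(K_i,D)$, so $L$ is invariant under $K_i$; hence $K_i$ induces a group $\ov{K_i}\leq\aut(D/L)$ and $\mathcal{A}_{D/L}=Cyc(\ov{K_i},D/L)$, giving $(D/L)_{right}\rtimes\ov{K_i}\leq\aut(\mathcal{A}_{D/L})$ and $\ov{K_i}\leq\aut(\mathcal{A}_{D/L})_e$. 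All four subgroups of order $3$ lie in $E=S\times C_1$, and the possibilities for $L$ are: $L=C_1$, for which $D/L\cong\mathbb{Z}_3\times\mathbb{Z}_{3^{n-1}}$; and $L=S$ or $L$ a diagonal subgroup of $E$, for which $D/L$ is cyclic of order $3^n$. Which of these occur is settled by checking $K_i$-invariance of the four subgroups against Table $1$.

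Next I would compute $\ov{K_i}$ and describe $\mathcal{A}_{D/L}$. Inspecting Table $1$, for $i\in\{0,1,2,4,5\}$ the induced group $\ov{K_i}$ has order at most $2$, while for $i=3$ one gets $\ov{K_3}\cong\mathbb{Z}_2\times\mathbb{Z}_2$ exactly when $L=C_1$, and $|\ov{K_3}|=2$ when $L=S$. In each case $\mathcal{A}_{D/L}$ is a tensor product whose factors are of three types: a group ring $\mathbb{Z}M$, the rank-$2$ $S$-ring over $S\cong\mathbb{Z}_3$, and an inversion ring $Cyc(\langle\delta\rangle,\mathbb{Z}_{3^m})$ with $\delta:x\mapsto x^{-1}$. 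For example, for $i=3$, $L=C_1$ one has $\mathcal{A}_{D/C_1}=Cyc(\langle\delta\rangle,C/C_1)\otimes Cyc(\langle\delta\rangle,S)$, whereas for $i\in\{2,3,4\}$, $L=S$ one gets the inversion ring $Cyc(\langle\delta\rangle,C)$ over the cyclic group $D/S\cong C$.

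With the structure in hand I would produce the orbit as follows. Whenever $\mathcal{A}_{D/L}$ is quasi-thin (this is every case except $i=3$, $L=C_1$) and has at least two orthogonals, Lemma \ref{quasithin} already gives a faithful regular orbit of $\aut(\mathcal{A}_{D/L})_e$, and Lemma \ref{regorb} finishes. In the remaining cases — the thin ones, those with a single orthogonal (e.g. $i=1$, $L=C_1$), and the case $i=3$, $L=C_1$ — I would instead determine $\aut(\mathcal{A}_{D/L})$ directly: the automorphism group of $\mathbb{Z}M$ is $M_{right}$, that of the rank-$2$ ring over $\mathbb{Z}_3$ is $\sym(3)$, and that of $Cyc(\langle\delta\rangle,\mathbb{Z}_{3^m})$ is the dihedral group $\mathbb{Z}_{3^m}\rtimes\langle\delta\rangle$ (because $\aut(\mathcal{A}_{D/L})$ lies in the automorphism group of the connected circulant $\cay(\mathbb{Z}_{3^m},\{u,u^{-1}\})$ for a generator $u$, which is dihedral). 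Granting that these groups multiply over the tensor decomposition, the point stabilizer is the direct product of the stabilizers $\{e\}$, $\sym(2)$, $\langle\delta\rangle$; each has a faithful regular orbit on its factor (a fixed point, a transposed pair $\{q,q^2\}$, a pair $\{u,u^{-1}\}$), and the product of these orbits is a faithful regular orbit of $\aut(\mathcal{A}_{D/L})_e$. Lemma \ref{regorb} then yields $2$-isolation.

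The main obstacle is this last ``granting'': one must exclude automorphisms of $\mathcal{A}_{D/L}$ beyond the direct product of the factor automorphism groups. As all factors live over $3$-groups, the usual coprimality argument is unavailable and the claim has to be verified by hand; the decisive point is that an automorphism of an $S$-ring fixes every basic set, which forbids the coordinate-swap isomorphism even when two tensor factors are isomorphic — as happens for $i=3$, $L=C_1$, $n=2$, where both factors are the rank-$2$ ring over $\mathbb{Z}_3$ and $\aut(\mathcal{A}_{D/C_1})=\sym(3)\times\sym(3)$ rather than $\sym(3)\wr\sym(2)$. This rigidity, together with the explicit orbits above, is exactly what makes the single genuinely non-quasi-thin case $i=3$, $L=C_1$ go through.
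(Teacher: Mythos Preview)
Your plan is essentially the paper's: reduce to Lemma~\ref{regorb} via a faithful regular orbit, invoking Lemma~\ref{quasithin} in the quasi-thin cases and computing $\aut(\mathcal{A}_{D/L})$ directly otherwise. The paper's case split is a bit cleaner (first $L\ne C_1$, where $D/L$ is cyclic and the ring is thin or an inversion ring; then $L=C_1$ case-by-case on $i$), but the substance is the same.

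Two corrections. First, your blanket claim that $\mathcal{A}_{D/L}$ is a tensor product of the three listed types fails for $i\in\{4,5\}$ with $L=C_1$: for $K_4$ the quotient is the inversion ring $Cyc(\langle\delta\rangle,D/C_1)$, whose basic set $\{\bar x\bar s,\bar x^{-1}\bar s^2\}$ is not a product of subsets of $C/C_1$ and $S$; for $K_5$ the induced map $\bar x\mapsto\bar s\,\bar x^{-1}$ again mixes the two factors. For $n\ge 3$ this is harmless, since those rings are quasi-thin with at least two orthogonals and Lemma~\ref{quasithin} applies. Second, for small $n$ the quasi-thin route can fail (e.g.\ $i=5$, $L=C_1$, $n=2$: a single orthogonal and no tensor decomposition), and then neither of your two methods covers the case. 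The paper handles $n\le 3$ by a GAP computation; you should do the same or supply an ad hoc argument there.

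Your flagged obstacle is milder than you suggest. Once $G_1=\pi(C)$ and $G_2=\pi(S)$ are $\mathcal{A}_{D/L}$-subgroups, the stabilizer $\aut(\mathcal{A}_{D/L})_e$ preserves each setwise, and for $g=(g_1,g_2)$ the pairs $(g_1,g)$ and $(g_2,g)$ lie in basic relations of shape $R(\{e\}\times X_2)$ and $R(X_1\times\{e\})$; applying $\sigma$ forces $g^\sigma\in(\{g_1^\sigma\}\times X_2)\cap(X_1\times\{g_2^\sigma\})=\{(g_1^\sigma,g_2^\sigma)\}$, so the stabilizer injects into the product. The paper runs exactly this argument for $i=3$, $L=C_1$, obtaining $\aut(\mathcal{A}_{D/L})_e\cong\mathbb{Z}_2\times\mathbb{Z}_2$ with $\pi(\{s,s^2\}\{y,y^{-1}\})$ as a faithful regular orbit.
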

\begin{proof}
For $n\leq 3$ the statement of the corollary follows from  calculations in the group ring of $D$ that made by \cite{GAP}. So we assume that $n\geq 4$. By Lemma \ref{regorb}, it is sufficient to prove that $\aut(\mathcal{A}_{D/L})_e$ has a faithful regular orbit. If $L\neq C_1$ then $D/L$ is cyclic and  Theorem \ref{regular} implies that either $\mathcal{A}_{D/L}=\mathbb{Z}(D/L)$ or every basic set of $\mathcal{A}_{D/L}$ is of the form $\{x,x^{-1}\},~x\in C/L$. In both cases, obviously, the group $\aut(\mathcal{A}_{D/L})_e$ has a faithful regular orbit.

Let $L=C_1$ and $\pi:D\rightarrow D/L$ be the quotient epimorphism. If $\mathcal{A}=Cyc(K_0,D)$ then $|\aut(\mathcal{A}_{D/L})_e|=1$ and, obviously, $\aut(\mathcal{A}_{D/L})_e$ has a faithful regular orbit. If  $\mathcal{A}=Cyc(K_1,D)=\mathbb{Z}C\otimes \mathcal{A}_S$, where $\mathcal{A}_S$ has rank $2$,  then $\mathcal{A}_{D/L}=\mathbb{Z}C_{n-1}\otimes \mathcal{A}_S$. So $|\aut(\mathcal{A}_{D/L})_e|=2$ and $\pi(\{sx,s^2x\})$ is a faithful regular orbit of $\aut(\mathcal{A}_{D/L})_e$ . If $\mathcal{A}=Cyc(K_i,D),~i\in \{2,4,5\}$,   then $\mathcal{A}_{D/L}$ is a quasi-thin $S$-ring  with at least two orthogonals. Indeed, $\{\pi(y^2),\pi(y^{-2})\}$ and $\{\pi(y^4),\pi(y^{-4})\}$, where $y$ is a generator of $C_{n-1}$, are orthogonals. These orthogonals are distinct since $n\geq 4$. Thus $\aut(\mathcal{A}_{U/L})_e$ has a faithful regular orbit by Lemma \ref{quasithin}. If $\mathcal{A}=Cyc(K_3,D)$ the basic sets of $\mathcal{A}_{D/L}$ are of the form 

$$\{\pi(s),\pi(s^2)\},~\{\pi(y),\pi(y^{-1})\},~\pi(s)\{\pi(y),\pi(y^{-1})\}\cup \pi(s^2)\{\pi(y),\pi(y^{-1})\},~y\in C.$$

Note that $\pi(S)$ and $\pi(C)$ are $\mathcal{A}_{D/L}$-subgroups, $\mathcal{A}_{D/L}=\mathcal{A}_{C/L}\otimes\mathcal{A}_{S/L}$, and $\aut(\mathcal{A}_{D/L})_e|_S\cong \aut(\mathcal{A}_{D/L})_e|_C\cong \mathbb{Z}_2$. Thus we conclude that $\aut(\mathcal{A}_{D/L})_e\cong \mathbb{Z}_2\times \mathbb{Z}_2 $ and $\pi(s)\{\pi(y),\pi(y^{-1})\}\cup \pi(s^2)\{\pi(y),\pi(y^{-1})\}$ is a faithful regular orbit of $\aut(\mathcal{A}_{D/L})_e$.
\end{proof}

\begin{prop}\label{trivrad}
Let $\mathcal{A}$ be an $S$-ring over $D$. Suppose that for every  highest basic set $X$ with trivial radical the following statements hold:

$(1)$ $X$ is regular;

$(2)$  $\langle X \rangle=D$. 

Then every highest basic set of $\mathcal{A}$ has trivial radical. 
\end{prop}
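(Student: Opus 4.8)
The plan is to argue by contradiction: assume that some highest basic set $T$ has $L:=\rad(T)\neq e$. By Lemma \ref{radbasic} the group $L$ is an $\mathcal{A}$-subgroup and $T$ is a disjoint union of $L$-cosets, at least one of which consists of elements of order $3^n$. Since the hypothesis speaks only about highest basic sets of trivial radical, the first step is to fix such a set $X$ and feed it into the hypothesis (I take the existence of such an $X$ as given, this being exactly what makes the hypothesis non-vacuous). Then $X$ is regular, so all its elements have order $3^n$, and $\langle X\rangle=D$, so $X$ is not contained in $C$ and meets at least two cosets of $C$. Re-running, for this single $X$, the portions of the proof of Theorem \ref{regular} that use only $X$, the multiplier Theorem \ref{burn}, and Theorem \ref{sch} (the parts of the cases $|X|\in\{2,3,4,6\}$ that do not invoke regularity of all of $\mathcal{A}$), I would deduce that $C_1$ and $C_{n-1}$ are $\mathcal{A}$-subgroups and that $\{c_1\}$ or $\{c_1,c_1^2\}$ lies in $\mathcal{S}(\mathcal{A})$. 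These are the structural facts that allow $X$ to control the remaining highest basic sets.

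Next I would analyze $L$ through the quotient $\pi\colon D\to D/L$, distinguishing two regimes according to whether $D/L$ is cyclic. Since the order-$3$ subgroups of $D$ are $C_1,S,\langle sc_1\rangle,\langle sc_1^2\rangle$, one checks that $D/L$ is noncyclic exactly when $L\leq C$, i.e. $L=\langle c^{3^{n-k}}\rangle$ for some $1\leq k\leq n-1$, in which case $C_1\leq L$; in every other case $D/L$ is cyclic.

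In the cyclic regime $\pi(T)$ is a highest basic set of the $S$-ring $\mathcal{A}_{D/L}$ over the cyclic $3$-group $D/L$, and it has trivial radical because we have quotiented by $\rad(T)$; hence Lemma \ref{cyclering} together with Lemma \ref{cycleorbit} forces $\pi(T)$ to be a singleton or a set $\{u,u^{-1}\}$. On the other hand $\pi(X)$ is a highest $\mathcal{A}_{D/L}$-set that generates $D/L$, and its coset pattern is pinned down by $\rad(X)=e$ and $\langle X\rangle=D$. I would then compare the structure constant $c^{\pi(T)}_{\pi(X)\pi(X)^{-1}}$ evaluated inside the cyclic quotient with the value imposed by the spread of $X$, using Lemma \ref{eq} and the coset-counting Lemma \ref{intersection} exactly as in the counting arguments of Lemma \ref{regTc1} and Lemma \ref{sset}; the two values are incompatible, which rules out $L\not\leq C$ (and $L=C$).

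There remains the noncyclic regime $C_1\leq L\leq C$, which I would treat by induction on $n$, the base $n=1$ (the group $E$) being settled by inspection of the list in Lemma \ref{sringE}. Passing to $\mathcal{A}_{D/C_1}$ over $D/C_1\cong\mathbb{Z}_3\times\mathbb{Z}_{3^{n-1}}$, the image $\pi(X)$ is again a highest basic set, regular, of trivial radical and generating the quotient (here one uses that $X$ is not a union of $C_1$-cosets, so $\pi$ is injective on $X$); granting that $\mathcal{A}_{D/C_1}$ inherits the full hypothesis, induction gives that every highest basic set of $\mathcal{A}_{D/C_1}$ has trivial radical, and applying this to $\pi(T)$ forces $L=C_1$. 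The main obstacle is precisely this final exclusion of $L=C_1$: one must show that a highest basic set which is a union of $C_1$-cosets cannot coexist with the regular generating set $X$, for which I would play $\{c_1\}\in\mathcal{S}(\mathcal{A})$ and Lemma \ref{oarg} against the constancy statement of Lemma \ref{intersection} to manufacture a trivial-radical highest set that fails to generate $D$, contradicting hypothesis $(2)$. Verifying that $\mathcal{A}_{D/C_1}$ genuinely inherits the hypothesis, in particular handling highest sets $Y$ with $|Y\cap gC_1|=2$ where $\pi$ is not injective, is the delicate technical point on which the whole induction rests.
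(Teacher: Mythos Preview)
Your approach diverges sharply from the paper's and carries unresolved gaps. The paper does not argue by contradiction or pass to quotients by $\rad(T)$ at all. Instead it performs a short direct case split on $|X|$ for a fixed highest basic set $X$ with $\rad(X)=e$, using only Lemma~\ref{lowset}, Theorem~\ref{burn}, Theorem~\ref{sch}, Lemma~\ref{aset}, and equation~(8). The key observation you are missing is that hypothesis~(2) forces $X$ to meet at least two $C$-cosets, and then:
\begin{itemize}
\item $|X|=1$ gives $\langle X\rangle$ cyclic, contradicting~(2);
\item $|X|=4$ is shown to contradict~(2) as well, via a short computation producing a highest basic set of size at most $2$ that generates a cyclic group;
\item for $|X|\in\{3,6\}$ one has $X_{0e},X_{1s},X_{2s^2}$ all nonempty, whence the rational conjugates $\{X^{(m)}:(m,3)=1\}$ already partition $D\setminus D_{n-1}$, so \emph{every} highest basic set equals some $X^{(m)}$ and inherits trivial radical;
\item for $|X|=2$ one extracts from $\underline{X}\,\underline{X^{(-k)}}$ (equation~(6)) a singleton basic set $\{q\}\subset E\setminus C_1$, and then $\bigcup_m (X^{(m)}\cup (qX)^{(m)})=D\setminus D_{n-1}$, finishing the argument.
\end{itemize}
No induction, no quotient by $L=\rad(T)$, no structure-constant comparison is needed.

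Your proposal, by contrast, leaves the two central steps unexecuted. In the ``cyclic regime'' you assert that a structure-constant computation in $\mathcal{A}_{D/L}$ will produce an incompatibility, but you neither identify which constants nor why they conflict; Lemmas~\ref{regTc1} and~\ref{sset} treat very specific situations and do not obviously transport here. In the ``noncyclic regime'' you set up an induction on $n$ but explicitly concede that verifying the inductive hypothesis for $\mathcal{A}_{D/C_1}$ is ``the delicate technical point on which the whole induction rests'' and do not resolve it; in particular, a highest basic set of $\mathcal{A}_{D/C_1}$ need not be the image of a highest basic set of $\mathcal{A}$ with trivial radical, so hypothesis~(2) may fail downstairs. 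Finally, your appeal to ``re-running portions of Theorem~\ref{regular}'' is unsafe: that proof repeatedly uses the standing assumption $\rad(\mathcal{A})=e$ to derive contradictions, which is precisely what you are trying to establish.
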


\begin{proof}
Lemma \ref{lowset} implies that that $|X|\leq 6$. If $|X|=1$ then Condition $(2)$ of the Proposition does not hold. The statement of the proposition follows from Theorem \ref{burn} if $|X|\in \{3,6\}$. In these cases every highest basic set is rationally conjugate to $X$ because  $X_{0e}\neq \varnothing,~X_{1s}\neq \varnothing,~X_{2s^2}\neq \varnothing$. Suppose that $|X|=2$. Then without loss of generality we may assume that $X=\{x\}\cup s\{y\},~x,y\in C$. Theorem~\ref{sch} implies that $C_1$ is an  $\mathcal{A}$-subgroup. From $(6)$ (see proof of Theorem \ref{regular}) it follows that $T_s$ is regular (otherwise $|T_s|\geq 6$ by Lemma \ref{nonregc1}), $|T_s|\leq 2$, and $T_s\neq T_{s^2}$. Thus $E$ is an  $\mathcal{A}$-subgroup and Lemma \ref{sringE} yields that there exists a basic set of the form $\{q\},~q\in E\setminus C_1$. From Lemma \ref{aset} it follows that $qX$ is a basic set. Therefore 
$$\bigcup\limits_{m}{(X^{(m)}\cup (qX)^{(m)})}=D\setminus D_{n-1},$$
where $m$ runs over integers coprime to $3$. We are done because $\rad(X)=e$. 

Let us show that Condition $(2)$ of the Proposition does not hold whenever  $|X|=4$. In this case without loss of generality we may assume that $X=\{x,x^{-1}\}\cup s\{y\}\cup s^2\{y^{-1}\},~x,y\in C$. Theorem \ref{sch} implies that $C_1$ is an  $\mathcal{A}$-subgroup. Exactly one of the elements $sxy,sx^{-1}y$, say the first one, has order $3^n$. If $T_{sxy}$ is not regular then $|T_{sxy}|\geq 6$ by Lemma \ref{nonregc1} that contradicts  $(8)$ (see proof of Theorem~\ref{regular}). Indeed, exactly four distinct elements including $sxy$ enter $\underline{X}^2-\underline{X}^{(2)}$. So $T_{sxy}$ is regular and from $(8)$ it follows that  $T_{sxy}=\{sxy\}$ or $T_{sxy}=\{sxy,s^2x^{-1}y^{-1}\}$. In both cases $\langle T_{sxy} \rangle$ is cyclic.
\end{proof}

\section{$S$-rings over $D=\mathbb{Z}_3\times \mathbb{Z}_{3^n}$: nontrivial radical case }

In this section we are interested in an $S$-ring over $D$ that has nontrivial radical and follow \cite[Section~9]{MP3}.

\begin{theo}\label{nontrivrad}
Let $\mathcal{A}$ be an $S$-ring over  $D$ such that $\rad(\mathcal{A})>e$. Then $\mathcal{A}$ is a proper $S$-wreath product. Moreover, $|S|=1$ or $|S|=3$ or $\rad(\mathcal{A}_U)=e$ and $|L|=3$, where $S=U/L$.
\end{theo}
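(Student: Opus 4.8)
The plan is to let the radical itself carve out the wreath decomposition. Put $L=\rad(\mathcal A)$. Each $\rad(X)$ with $X$ a highest basic set is an $\mathcal A$-subgroup by Lemma~\ref{radbasic}, and since $\mathcal A$ is a ring the element $\underline{\rad(X_1)}\cdots\underline{\rad(X_t)}$ lies in $\mathcal A$ and has nonnegative coefficients, so its support is an $\mathcal A$-set; hence the subgroup generated by all these radicals, namely $L$, is a nontrivial $\mathcal A$-subgroup. Next let $U$ be the $\mathcal A$-subgroup generated by the union of all basic sets $X$ with $L\not\le\rad(X)$ (again an $\mathcal A$-subgroup by Lemma~\ref{radbasic}). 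By the very definition of $U$, any basic set lying outside $U$ is not among these generators and therefore satisfies $L\le\rad(X)$, i.e. is a union of $L$-cosets; thus $\mathcal A$ is the $U/L$-wreath product of $\mathcal A_U$ and $\mathcal A_{D/L}$. Moreover $L\le U$, because a basic set contained in $L$ and different from $\{e\}$ is a proper subset of $L$ and hence has radical strictly smaller than $L$, so it already contributes to $U$.

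It remains to show that the product is proper, i.e. $U\neq D$, and to bound $|S|$, where $S=U/L$. The decisive tool is rational conjugacy: by Theorem~\ref{burn} the maps $X\mapsto X^{(m)}$ with $m$ coprime to $3$ permute the highest basic sets, and since each such map fixes every subgroup of the $3$-group $D$ setwise we get $\rad(X^{(m)})=\rad(X)$, so rationally conjugate highest sets have equal radicals. When some highest basic set $X$ meets all three cosets of $C$, so that $X_{0e},X_{1s},X_{2s^2}\neq\varnothing$, every highest set is rationally conjugate to $X$ and they all share the common radical $L$; then no highest set enters the generation of $U$, whence $U\le D_{n-1}=\{g\in D:|g|\le 3^{n-1}\}$, a proper subgroup, and properness is immediate.

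The remaining configurations are exactly those in which the highest basic sets fail to spread over all three cosets of $C$, and here the behaviour is dictated by the position of $L$ relative to $C_1$, $\langle s\rangle$ and $E$, which Lemma~\ref{Tc1} and Lemma~\ref{Ts} describe completely. I would then rerun the construction with $L$ a minimal nontrivial $\mathcal A$-subgroup of $\rad(\mathcal A)$ and read off the trichotomy: if already $U=L$ we are in the genuine wreath case $|S|=1$; if $U$ and $L$ differ by a single layer of order $3$ we get $|S|=3$; and in the remaining case the minimal choice forces $|L|=3$ and $\rad(\mathcal A_U)=e$, the latter being verified by applying the classification of Sections~3 and~4 to $\mathcal A_U$. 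The main obstacle is the simultaneous control of the radicals of all highest basic sets: one must rule out the dangerous possibility that two highest sets carry different order-$3$ radicals (for instance $C_1$ and $\langle s\rangle$), which would drag a highest set into $U$ for every admissible choice of $L$ and destroy properness. Excluding this, together with pinning down $\rad(\mathcal A_U)=e$ in the large-$|S|$ case, is where the intersection bound of Lemma~\ref{intersection} and the fine order-$3$ analysis of Lemma~\ref{Ts} carry the weight.
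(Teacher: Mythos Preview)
Your proposal correctly identifies the architecture of a $U/L$-wreath decomposition, but the specific choice $L=\rad(\mathcal A)$ and $U=\langle X:L\not\le\rad(X)\rangle$ does not give properness unless \emph{every} highest basic set has radical exactly $L$. Indeed, with your definitions $U=D$ as soon as some highest basic set $X$ satisfies $\rad(X)\lneq L$, and since $L$ is merely the group generated by all such radicals this happens whenever two highest basic sets have distinct radicals. You recognise this yourself as ``the main obstacle'', but you do not resolve it; you only point to Lemma~\ref{intersection} and Lemma~\ref{Ts} as likely tools, and then switch mid-argument to a different $L$ (a minimal $\mathcal A$-subgroup), which is a separate construction requiring its own properness argument. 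Neither of the two lemmas you name is what actually settles the issue.

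The paper proceeds quite differently. It fixes $U$ once and for all as the group generated by the basic sets with \emph{trivial} radical, and proves (Lemma~\ref{Urad}) that $\rad(\mathcal A_U)=e$; this uses Proposition~\ref{nonreghigh} and Proposition~\ref{trivrad} from the previous sections and immediately forces $U<D$ under the hypothesis $\rad(\mathcal A)>e$, so properness comes for free. The group $L$ is then chosen adaptively: when there is a unique minimal $\mathcal A$-subgroup, or when $C_1$ lies in the radical of every basic set outside $U$, Lemma~\ref{uniqmin} finishes directly. Otherwise one introduces the auxiliary set $V$ (basic sets with radical trivial or containing $c_1$), shows it equals some $D_k$, and proves that every basic set outside $V$ has the \emph{same} order-$3$ radical (Lemma~\ref{samerad}); the key tool here is Lemma~\ref{cyclesection} on cyclotomic $S$-rings over cyclic $3$-groups, applied to a suitable quotient, not the intersection bound or the order-$3$ classification you invoke. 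Finally one checks $\rad(\mathcal A_V)=e$, whence $V=U$, and takes that common order-$3$ radical as $L$.

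In short: the gap is real, and the paper closes it with a different organisation (fixing $U$ via trivial radicals, choosing $L$ afterwards) and a different key lemma (Lemma~\ref{cyclesection} feeding into Lemma~\ref{samerad}) than the ones you anticipate.
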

\begin{proof}

Denote by $U$ the group generated by all basic sets of $\mathcal{A}$ with trivial radicals.

\begin{lemm} \label{Urad}
$U$ is an $\mathcal{A}$-subgroup and $\rad(\mathcal{A}_{U})=e$.
\end{lemm}

\begin{proof}

The first statement follows from Lemma \ref{radbasic}. Let us prove that $\rad(\mathcal{A}_{U})=e$. Without loss of generality we may assume that  $U=D$. Then there exists a highest basic set $X$ such that $\rad(X)=e$. If $X$ is not regular then  it contains an element of order $3$ by Lemma \ref{nonregset} and  we have that $\rad(\mathcal{A})=e$ by Proposition \ref{nonreghigh}. Therefore we may assume that every highest basic set with trivial radical is regular. At least one of the sets $X_{0e},X_{1s},X_{2s^2}$ is not empty. Without loss of generality  we  assume that $X_{0e}\neq \varnothing$. If $\langle Z \rangle=D$ for every highest basic set $Z$ with trivial radical  then  every highest basic set has trivial radical by Proposition \ref{trivrad} and we are done. So we may assume that $\langle X \rangle=C$. Then by Lemma \ref{cycleorbit}, we have $X=\{x\}$ or $X=\{x,x^{-1}\}$, where $x\in C$. Since $\langle X \rangle=C$, there exists a basic set $Y$ such that $D=\langle X,Y \rangle$ and $\rad(Y)=e$. If $Y$ is not regular then from Lemma \ref{nonregc1} it follows that $C_1\leq \rad(Y)$. So $Y$ is  regular. Without loss of generality $Y_{1s}\neq \varnothing$ and $y\in Y_{1s}$. Note that $|Y_{1s}|\leq 2$ by Lemma \ref{lowset}.

Let us show that $\rad(T_s)=e$, where $T_s$ is a basic set containing $s$. It is obvious if $T_s=Y$. Suppose that $T_s\neq Y$. Assume on the contrary that $\rad(T_s)>e$. Then $C_1\leq \rad(T_s)$. If $|Y_{1s}|=1$ then exactly two elements $s$ and $sy^2$ from $sC$ enter the element $\alpha=(y+y^{-1})\underline{Y}$, a contradiction. Suppose that $Y_{1s}=\{y,z\}$. Then exactly four elements $s$, $syz$, $sy^{-1}z$, and $sy^2$ from $sC$ enter $\alpha$. Since $sc_1,sc_1^2\in T_s$, we  conclude that  $\{c_1,c_1^2\}\subseteq\{y^2,yz,y^{-1}z\}$. If $y^2\in C_1$ then $sc_1\in Y$ or $sc_1^2\in Y$ and $Y=T_s$, a contradiction. Thus $\{c_1,c_1^2\}=\{yz,y^{-1}z\}$. Hence $z^2=e$, a contradiction.  

From the claim of the previous paragraph it follows that $T_s$ is regular because otherwise $C_1\leq \rad(T_s)$ by Lemma \ref{nonregc1}. Since $C_1$ is an $\mathcal{A}$-subgroup, from Lemma \ref{sringE} it follows that there exisits an $\mathcal{A}$-subgroup $Q\neq C_1$ of order $3$. Every basic set inside $C$ has the form $\{x\}$ or  $\{x,x^{-1}\}$. Hence  $\{qx,qx^{-1},q^2x,q^2x^{-1}\}$ is an $\mathcal{A}$-set for every $x\in C$. Thus every  basic set of $\mathcal{A}$ has  trivial radical as required.
\end{proof}

\begin {lemm} \label{minagr}
Let $\mathcal{A}$be an $S$-ring over  $D$ and $H$ be a minimal $\mathcal{A}$-subgroup. Then either $H \setminus \{e\}\in \mathcal{S}(\mathcal{A})$ or $|H|=3$ and $\mathcal{A}_H=\mathbb{Z}H$.
\end{lemm}

\begin{proof}
If $|H|>3$ then the primitive $S$-ring $\mathcal{A}_H$ is of rank $2$ by Lemma \ref{bgroups}. Thus $H\setminus \{e\} \in \mathcal{S}(\mathcal{A})$. Let  $|H|=3$ and $rk(\mathcal{A}_H)>2$. Then, obviously, $\mathcal{A}_H=\mathbb{Z}H$.
\end{proof}

Since by the theorem hypothesis $\rad(\mathcal{A})>e$, Lemma \ref{Urad} implies that $U<D$. In addition, from Lemma \ref {minagr} it follows that $U$ contains every minimal $\mathcal{A}$-subgroup.
 
\begin{lemm}\label{uniqmin}
If there is a unique minimal $\mathcal{A}$-subgroup or $C_1\leq \rad(X)$ for every basic set $X$ outside $U$ then the statement of Theorem \ref{nontrivrad} holds.

\end{lemm}

\begin{proof}
Let $L$ be a unique minimal $\mathcal{A}$-subgroup. Then  $L\le\rad(X)$ for every $X\in \mathcal{S}(\mathcal{A})_{D\setminus U}$  because  Lemma \ref{radbasic} implies that  $\rad(X)$ is a nontrivial $\mathcal{A}$-subgroup. So $\mathcal{A}$ is the $U/L$-wreath product. If $|L|=3$ then the statement of  Theorem \ref{nontrivrad} holds.  Suppose that $|L|>3$. Then from Corollary \ref{regc1} it follows that   $\mathcal{A}_U$ is not regular (otherwise $C_1$ is an $\mathcal{A}$-subgroup of order $3$). Therefore by Theorem \ref{nonreg}, the $S$-ring $\mathcal{A}_U$ has rank $2$. Hence $U=L$ and we are done.

To complete the proof suppose that there are at least two minimal $\mathcal{A}$-subgroups and $C_1 \leq \rad(X)$ for every $X\in \mathcal{S}(\mathcal{A})_{D\setminus U}$. Assume that $C_1\nleq U$. Then $|U|=3$ and $U$ is the unique minimal $\mathcal{A}$-subgroup becuse $U$ contains every minimal $\mathcal{A}$-subgroup, a contradiction. So $C_1\leq U$. Denote by $H$ the minimal $\mathcal{A}$-subgroup containing $C_1$. Then $\mathcal{A}$ is the $S$-wreath product, where $S=U/H$. If $H=C_1$ then $|H|=3$ and the statement of Theorem \ref{nontrivrad} holds. If $H>C_1$ then $\mathcal{A}_U$ is not regular by Corollary \ref{regc1}. Thus Theorem \ref{nonreg} implies that $\mathcal{A}_U=\mathcal{A}_H\otimes \mathcal{A}_L$ where $|L|\leq 3$. So $|S|=|U/H|\in \{1,3\}$.
\end{proof}

The uninon of all basic sets $X$ such that $\rad(X)=e$ or $c_1\in \rad(X)$ denote by $V$. Then, obviously, $U\subset V$ and  $V$ is an $\mathcal{A}$-set. By  Lemma \ref{uniqmin}, we may assume that $V\neq D$ and there exist at least two minimal $\mathcal{A}$-subgroups. Let $X$ be a basic set containing an element of order $3$. Suppose that $\rad(X)> e$ and $c_1\notin \rad(X)$. Then $|\rad(X)|=3$ and $\rad(X)$ is the unique minimal $\mathcal{A}$-subgroup, a contradiction. So $X\subseteq V$ and $E\subseteq V$.

For given $\mathcal{A}$-set $X$ put $\mathcal{S}(\mathcal{A})_{X}=\{Y\in\mathcal{S}(\mathcal{A}):Y\subseteq X\}$.
\begin{lemm}\label{radv}
Let $X\in \mathcal{S}(\mathcal{A})_{D\setminus V}$. Then
\begin{enumerate}
\item $\rad(X)=\{e,q,q^2\},~q\in E\setminus C_1$;
\item $X$ regular and $X_{0e}\neq \varnothing,~X_{1s}\neq \varnothing,~X_{2s^2}\neq \varnothing.$
\end{enumerate}
\end{lemm}

\begin{proof}
Since $X \not \subseteq U$, we have $\rad(X)>e$. The group $\rad(X)$ does not contain elements of order greater than $3$ because by the above assumption $c_1\notin \rad(X)$. Therefore $\rad(X)$ is a group of order $3$ distinct from $C_1$ and Statement $(1)$ holds. Denote  $\rad(X)=\{e,q,q^2\}$ by $L$. Then $\rad(\pi(X))=e$, where $\pi:D\rightarrow D/L$ is  the quotient epimorphism. The group $D/L$ is cyclic. Thus by Lemma \ref{cyclering}, the set $\pi(X)$  is regular or $\pi(X)=H\setminus\{e\}$ for some $H\leq D/L$. If $\pi(X)=H\setminus\{e\}$  then $X$ contains all elements of order $3$ distinct from $q$ and $q^2$. Hence $L$ is a unique minimal $\mathcal{A}$-subgroup, a contradiction. So  $\pi(X)$ and $X$ are regular. Since $LX=X$, we  conclude that $X_{0e}\neq \varnothing,~X_{1s}\neq \varnothing,~X_{2s^2}\neq \varnothing$.
\end{proof}

 Statement $(2)$ of Lemma \ref{radv} yields that 
$$\bigcup\limits_{X\in\mathcal{S}(\mathcal{A})_{D\setminus V}}tr(X)=D\setminus D_k$$
for some $k\geq 1$. So  $V=D_k$ is an $\mathcal{A}$-subgroup.

\begin{lemm}\label{samerad}
Let $X,~Y\in \mathcal{S}(\mathcal{A})_{D\setminus V}$. Then $\rad(X)=\rad(Y)$.
\end{lemm}

\begin{proof}
Assume the contrary. Then by Lemma  \ref{radv}, without loss of generality we may assume that  $\rad(X)=\{e,s,s^2\}=S$ and $\rad(Y)=\{e,sc_1,sc_1^2\}.$ So  
$$X=X_{0e}\cup sX_{0e}\cup s^2X_{0e},~Y=Y_{0e}\cup sc_1Y_{0e}\cup s^2c_1^2Y_{0e}.$$ 
The sets $X_{0e}$ and $Y_{0e}$ are regular and nonempty by Statement $(2)$ of Lemma \ref{radv}. Thus $X_{0e}$ and $Y_{0e}$ are orbits of a group $K\leq \aut(C)$. Furthermore, $\rad(X_{0e})=\rad(Y_{0e})=e$ since otherwise $\rad(X)$ or $\rad(Y)$ contains at least nine elements that contradicts Statement $(1)$ of Lemma \ref{radv}. Consider the basic sets   $\pi(X)$ and $\pi(Y)$ of the circulant $S$-ring $\mathcal{A}_{D/S}$, where  $\pi:D\rightarrow D/S$ is the quotient epimorphism. The radical of $\pi(X)=X_{0e}$  is trivial, the radical $\pi(Y)$ contains $\pi(sc_1)$. This contradicts Lemma \ref{cyclesection} applied  for an $S$-ring  $\pi(\mathcal{A}_{\langle X \rangle})$ and $\mathcal{A}$-section $\pi(\langle Y \rangle)$.
\end{proof}

\begin{lemm}
$\rad(\mathcal{A}_V)=e.$ In particular, $U=V$.
\end{lemm}

\begin{proof}
Let $X$ be the highest basic set of $\mathcal{A}_V$. Since $V\neq D$, there exists $Y\in  \mathcal{S}(\mathcal{A})_{D\setminus V}$ such that $X\cap Y^3\neq \varnothing$. Moreover, $X\subset Y^3$ because $Y^3$ is an $\mathcal{A}$-set. By Lemma \ref{radv}, without loss of generality  we may assume that $Y=Y_{0e}\cup sY_{0e}\cup s^2Y_{0e},~\rad(Y)=S,~\rad(Y_{0e})=e$. Then $Y^3=SY_{0e}^3$. Since $Y_{0e}$ is nonempty and regular by Lemma \ref{radv}, the set $Y_{0e}$ is an orbit of $K\leq \aut(C)$. Therefore Lemma \ref{cycleorbit} implies that either $Y_{0e}=\{y\}$ or $Y_{0e}=\{y,y^{-1}\}$.  So $Y^3$ is one of two types:
$$\{y^3,sy^3,s^2y^3\},$$
$$\{y,sy,s^2y,y^{-1},sy^{-1},s^2y^{-1},y^3,sy^3,s^2y^3,y^{-3},sy^{-3},s^2y^{-3}\}.$$
Since $X\subset V$, we have $\rad(X)=e$ or $c_1\in \rad(X)$. Let us show that the latter is impossible. This is obvious  if $Y^3$ is of the first type. If $Y^3$ is of the second type and $c_1\in \rad(X)$ then it is easy to check that $y=c_1$ or $y=c_1^2$. However, $y\notin V$. This contradicts the fact that  $E\subseteq V$. Thus $\rad(X)=e$ and $\rad(\mathcal{A}_V)=e.$
\end{proof}

To complete the proof of Theorem \ref{nontrivrad}, let $L=\rad(X),~X\in \mathcal{S}(\mathcal{A})_{D\setminus V}$. By Lemma \ref{radv}, the group $L$ does not depend on the choice of $X$. Then   $\mathcal{A}$ is the $S$-wreath product where $S=V/L$. Since $\rad(\mathcal{A}_V)=e$ and $|L|=3$ the statement of Theorem \ref{nontrivrad} holds.
\end{proof}

\section{$S$-rings over $D=\mathbb{Z}_3\times \mathbb{Z}_{3^n}$: schurity }
In this section we prove Theorem \ref{main}.

\begin{proof}[Proof of the Theorem \ref{main}]
We proceed by induction on $n$. The statement of the theorem for $n\leq 3$ follows from   calculations in the group ring of $D$ that made by \cite{GAP}. Let $n\geq 4$ and $\mathcal{A}$ be an $S$-ring over $D$. Let us show that $\mathcal{A}$ is schurian. If $\rad(\mathcal{A})=e$ then $\mathcal{A}$ is schurian by Theorem \ref{nonreg} and Theorem \ref{regular}. Suppose that $\rad(\mathcal{A})>e$. Then Theorem \ref{nontrivrad} implies that $\mathcal{A}$ is the $S$-wreath product, where $S=U/L$, and one of the following statements holds: 

$(1)$ $|S|=1,$ 

$(2)$ $|S|=3,$ 

$(3)$ $\rad(\mathcal{A}_U)=e$ and $|L|=3.$

Let us show that in these cases $\mathcal{A}$ is schurian. By the induction hypothesis, the $S$-rings $\mathcal{A}_U$ and $\mathcal{A}_{D/L}$ are schurian. So by Corollary \ref{isolated},  it is sufficient to prove that  $\aut(\mathcal{A}_S)$ is $2$-isolated. Obviously, the group $\aut(\mathcal{A}_S)$ is $2$-isolated whenever  $|S|=1$ or $|S|=3$. Hence we may assume that $|S|\geq 9$. Let $\rad(\mathcal{A}_U)=e$ and $|L|=3$. If $U$ is cyclic then Lemma \ref{cyclering} and Lemma \ref{cycleorbit} imply that every basic set of $\mathcal{A}_U$ is of the form $\{x\}$ or every basic set of $\mathcal{A}_U$ is of the form $\{x,x^{-1}\}$. In these cases, obviously, $\aut{(\mathcal{A}_{U/L})}_e$ has a faithful regular orbit. So $\aut{(\mathcal{A}_{U/L})}$ is $2$-isolated by Lemma \ref{regorb}.  Thus we may assume that $U=D_k$, where $k<n$.  Since $L$ is an $\mathcal{A}$-subgroup of order $3$, we conclude that $rk(\mathcal{A}_U)>2$.

Suppose that $\mathcal{A}_U$ is not regular. Then Theorem \ref{nonreg} implies that $\mathcal{A}_U=\mathcal{A}_H\otimes \mathcal{A}_L$, where $rk(\mathcal{A}_H)=2$. 

If $rk(\mathcal{A}_L)=2$, we have 

$$\aut(\mathcal{A}_U)^S=(\sym(H)\times \sym(L))^{U/L}=\sym(U/L).$$ 

If $rk(\mathcal{A}_L)=3$, we have 

$$\aut(\mathcal{A}_U)^S=(\sym(H)\times L_{right})^{U/L}=\sym(U/L).$$ 
Note that $D/L$ is cyclic and $S$ is an $\mathcal{A}_{D/L}$-section of composite order.  From \cite[Theorem 4.6]{EP3} it follows that $\aut(\mathcal{A}_{D/L})^{S}=\sym(U/L)$. Thus  $\mathcal{A}$ is schurian by Theorem \ref{genwr} applied for $\Delta_0=\aut(\mathcal{A}_{D/L})$ and $\Delta_1=\aut(\mathcal{A}_U)$. Hence by Theorem \ref{nonreg}, we may assume that $\mathcal{A}_U$ is regular. From Theorem \ref{regular} it follows that $\mathcal{A}_U=Cyc(K,U)$, where $K$ listed in Table $1$. If $\mathcal{A}_U=Cyc(K_i,U),~i\in\{0,1,2,3,4,5\}$,  then the group $\mathcal{A}_{U/L}$ is $2$-isolated by Corollary \ref{regisol}. 

Let $\mathcal{A}_U=Cyc(K_i,U),~i\in\{6,7,8,9\}$. Then $C_1$ is the unique $\mathcal{A}$-subgroup of order $3$ and, hence, $L=C_1$. Besides, for every highest basic set $X$ of $\mathcal{A}_U$ we have $X_{0e}\neq \varnothing,~X_{1s}\neq \varnothing,~X_{2s^2}\neq \varnothing$. Let us show that  the group $H=\rad(\mathcal{A})$ is not cyclic. Assume the contrary. Note that $C_1\leq H<U$. The first inequality holds because otherwise $H$ is an $\mathcal{A}$-subgroup of order $3$ distinct from $C_1$. The latter inequality holds because otherwise $X\subseteq H$ for some highest basic set $X$ of $\mathcal{A}_U$ and $H$ can not be cyclic since $X_{0e}\neq \varnothing,~X_{1s}\neq \varnothing,~X_{2s^2}\neq \varnothing$. Next, the group $D/H$ is isomorphic to $D_l$, where $l<n$. The $S$-ring $\mathcal{A}_{D/H}$ has trivial radical because otherwise $H$ is not the radical of $\mathcal{A}$. The group $\widetilde{E}=\{x\in D/H: |x|=3\}$ is an $\mathcal{A}_{D/H}$-subgroup because every basic set of $\mathcal{A}_{U/H}$ is regular and $U/H$ contains all elements of order $3$. Theorem \ref{nonreg} applied to $D/H\cong D_l$ yields that $\mathcal{A}_{D/H}$ is regular. Then by Corollary \ref{regc1}, there exists a cyclic $\mathcal{A}_{D/H}$-subgroup of order $3^{l-1}$. This contradicts the fact that $X_{0e}\neq \varnothing,~X_{1s}\neq \varnothing,~X_{2s^2}\neq \varnothing$ for all highest basic sets of $\mathcal{A}_{U/H}$. Therefore $H$ is not cyclic and, hence, $D/H$ is cyclic.  

Since $H$ is not cyclic,  there exists a highest basic set $X$ of $\mathcal{A}$ such that $s\in \rad(X)$. By Theorem~\ref{burn}, every highest basic set of $\mathcal{A}$ is rationally conjugate to $X$ and $\rad(Y)=H$ for every highest basic set $Y$ of $\mathcal{A}$. Therefore $\mathcal{A}_{D/H}$ is circulant and $\rad(\mathcal{A}_{D/H})=e$. Lemma~\ref{cyclering} and Lemma~\ref{cycleorbit} imply that every basic set of $\mathcal{A}_{D/H}$ is of the form $\{x\}$ or every basic set of $\mathcal{A}_{D/H}$ is of the form $\{x,x^{-1}\}$. So $\mathcal{A}$ is regular and $D_{n-1}$ is an $\mathcal{A}$-subgroup. Since $H\leq D_{n-1}$ and $\rad(Y)=H$ for every highest basic set $Y$ of $\mathcal{A}$, we conclude that $\mathcal{A}$ is an $D_{n-1}/H$-wreath product. By the induction hypothesis, $S$-rings $\mathcal{A}_{D_{n-1}}$ and $\mathcal{A}_{D/H}$ are schurian. The basic sets of circulant $S$-ring $\mathcal{A}_{D_{n-1}/H}$ are of the form $\{x\}$ or of the form $\{x,x^{-1}\}$. Thus $\aut(\mathcal{A}_{D_{n-1}/H})_e$ has a faithful regular orbit. So $\aut(\mathcal{A}_{D_{n-1}/H})$ is $2$-isolated by Lemma~\ref{regorb}. Therefore   Corollary \ref{isolated} applied to $\widetilde{S}=D_{n-1}/H$ yields that $\mathcal{A}$ is schurian which completes the proof of the theorem.
\end{proof}

{\bf Acknowledgment.}  The author would like to thank Prof. I.~Ponomarenko for the fruithful discussions on the subject matters. His suggestions help us to improve the text significantly.

\clearpage

\end{document}